\listfiles
\documentclass[12pt]{amsart}

\usepackage{amsmath}
\usepackage{amssymb,mathtools,amsthm,latexsym,bm}

\setlength{\textwidth}{\paperwidth}
\addtolength{\textwidth}{-2in}
\calclayout

\title{The volume of the boundary of a Sobolev $(p, q)$-extension domain}
\author{Pekka Koskela, Alexander Ukhlov
and Zheng Zhu}

\address{Pekka Koskela\\
Department of Mathematics and Statistics\\
University of Jyv\"askyl\"a, P.O. Box 35 (MaD),
FI-40014, Jyv\"askyl\"a, Finland}
\email{\tt pekka.j.koskela@jyu.fi}

\address{Alexander Ukhlov\\
Department of Mathematics\\
Ben Gurion University of the Negev, P.O.B. 653, Be'er Sheva 84105, Israel}
\email{\tt ukhlov@math.bgu.ac.il}

\address{Zheng Zhu\\
Department of Mathematics and Statistics\\
University of Jy\"askyl\"a, P.O. Box 35 (MaD),
FI-40014, Jyv\"askyl\"a, Finland}
\email{\tt zheng.z.zhu@jyu.fi}

\usepackage{stackrel}

\usepackage{color}

\numberwithin{equation}{section}
\setcounter{secnumdepth}{2}
\setcounter{tocdepth}{1}
\long\def\colred#1\endred{{\color{red}#1}}
\long\def\colgreen#1\endgreen{{\color{green}#1}}
\long\def\colmagenta#1\endmagenta{{\color{magenta}#1}}
\long\def\colblue#1\endblue{{\color{blue}#1}}
\long\def\colyellow#1\endyellow{{\color{yellow}#1}}

\usepackage{tikz}
\usetikzlibrary{matrix,arrows}

\theoremstyle{plain}
\newtheorem{thm}{Theorem}[section]
\newtheorem{lem}{Lemma}[section]
\newtheorem{prop}{Proposition}[section]
\newtheorem{rem}{Remark}[section]
\newtheorem{cor}{Corollary}[section]
\newtheorem{defn}{Definition}[section]

\newtheorem{quest}{Question}[section]

\numberwithin{equation}{section}

\theoremstyle{remark}
\newtheorem{remark}[equation]{Remark}

\theoremstyle{definition}

\newtheorem*{question*}{Question}



\usepackage{graphicx}

\subjclass[2010]{46E35, 30L99}

\thanks{The first and third authors have been supported  by the Academy of Finland (project No. 323960). The third author thanks  Tero Kilpel\"ainen for introducing him to fine topology. }

\newcounter{prob}
\setcounter{prob}{1}

\def\rr{{\mathbb R}}
\def\rn{{{\rr}^n}}

\def\fz{\infty}

\def\dist{{\mathop\mathrm{\,dist\,}}}

\def\boz{{\Omega}}

\def\bint{{\ifinner\rlap{\bf\kern.25em--}
\int\else\rlap{\bf\kern.45em--}\int\fi}\ignorespaces}

\def\bbint{{\ifinner\rlap{\bf\kern.25em--}
\hspace{0.078cm}\int\else\rlap{\bf\kern.45em--}\int\fi}\ignorespaces}

\def\sC{\mathsf C}

\def\diam{{\mathop\mathrm{\,diam\,}}}

\def\r{\right}
\def\lf{\left}

\def\XXint#1#2#3{{\setbox0=\hbox{$#1{#2#3}{\int}$ }
\vcenter{\hbox{$#2#3$ }}\kern-.58\wd0}}


%
%
%
%
\def\vint_#1{\mathchoice%
        {\mathop{\kern 0.2em\vrule width 0.6em height 0.69678ex depth -0.58065ex
                \kern -0.8em \intop}\nolimits_{\kern -0.4em#1}}%
        {\mathop{\kern 0.1em\vrule width 0.5em height 0.69678ex depth -0.60387ex
                \kern -0.6em \intop}\nolimits_{#1}}%
        {\mathop{\kern 0.1em\vrule width 0.5em height 0.69678ex
            depth -0.60387ex
                \kern -0.6em \intop}\nolimits_{#1}}%
        {\mathop{\kern 0.1em\vrule width 0.5em height 0.69678ex depth -0.60387ex
                \kern -0.6em \intop}\nolimits_{#1}}}
\def\vintslides_#1{\mathchoice%
        {\mathop{\kern 0.1em\vrule width 0.5em height 0.697ex depth -0.581ex
                \kern -0.6em \intop}\nolimits_{\kern -0.4em#1}}%
        {\mathop{\kern 0.1em\vrule width 0.3em height 0.697ex depth -0.604ex
                \kern -0.4em \intop}\nolimits_{#1}}%
        {\mathop{\kern 0.1em\vrule width 0.3em height 0.697ex depth -0.604ex
                \kern -0.4em \intop}\nolimits_{#1}}%
        {\mathop{\kern 0.1em\vrule width 0.3em height 0.697ex depth -0.604ex
                \kern -0.4em \intop}\nolimits_{#1}}}

\begin{document}

\maketitle

\begin{abstract}
Let $n\geq 2$ and $1\leq q<p<\fz$. We prove that if $\boz\subset\rn$ is a Sobolev $(p, q)$-extension domain, with additional capacitory restrictions on boundary in the case $q\leq n-1$, $n>2$, then $|\partial\boz|=0$. In the case $1\leq q<n-1$, we give an example of a Sobolev $(p,q)$-extension domain with $|\partial\boz|>0$.
\end{abstract}

\section{Introduction}
Let $1\leq q\leq p\leq\fz$. Then a bounded domain $\boz\subset\rn$, $n\geq 2$, is said to be a Sobolev $(p, q)$-extension domain if there exists a bounded extension operator
$$ E:W^{1,p}(\boz)\to W^{1,q}(\rn).$$ 

Partial motivation for the study of Sobolev extensions comes from PDEs (see, for example, \cite{M}). In \cite{C61, Stein} it was proved that if $\Omega\subset\mathbb R^n$ is a Lipschitz domain, then there exists a bounded linear extension operator $E: W^{k, p}(\Omega)\to W^{k, p}(\mathbb R^n)$,  for each $k\ge1$ and all $1\leq p\leq \infty$. Here $W^{k, p}(\boz)$ is the Banach space of $L^p$-integrable functions whose weak derivatives up to order $k$ belong to $L^p(\Omega)$.
More generally, the notion of $(\varepsilon,\delta)$-domains was introduced in \cite{J81} and it was proved that, for every $(\varepsilon,\delta)$-domain there exists a bounded linear extension operator $E: W^{k, p}(\Omega)\to W^{k, p}(\mathbb R^n)$, for all $k\geq 1$  and $1\leq p\leq\fz$. 

A geometric characterization of simply connected planar Sobolev $(2,2)$-extension domains was obtained in \cite{VGL79}. By later results in \cite{pekkaJFA, KRZ1, KRZ2, ShJFA}, we understand the geometry 
of simply connected planar Sobolev $(p, p)$-extension domains, for all $1\leq p\leq\fz$. Geometric characterizations are also known in the case of homogeneous Sobolev spaces $L^{k, p}(\Omega)$, $2<p<\infty$, 
defined on simply connected planar domains. Here $L^{k, p}(\boz)$ is the seminormed space of locally integrable functions whose $k$th-order distributional partial derivatives belong to $L^P(\boz)$. However, no characterizations are available in the general setting.

The boundary $\partial\boz$ of a Sobolev $(p, p)$-extension domain is necessarily of volume zero when $1\leq p<\fz$ by results in \cite{HKT}. Actually, $\boz$ has to be Ahlfors regular in the sense that 
\begin{equation}\label{eq:regular}
|B(x, r)\cap\boz|\geq C|B(x, r)|
\end{equation}
for every $x\in\partial \Omega$ and all $0<r<\min\{1,\frac{1}{4}\diam\boz\}$ with a constant $C$ independent of $x, r$. Even more is known if $\Omega$ is additionally a planar Jordan domain. In this case $\Omega$ 
has to be a so-called John domain when $1\leq p\leq 2$ and the complementary domain needs to be a John domain when $2\leq p<\fz$. Consequently, the Hausdorff dimension of $\partial \boz$ is necessarily strictly 
less than two by results in \cite{KR}. For a sharp estimate see the very recent paper \cite{LRT}. However, in general, the Hausdorff dimension of the boundary of a Sobolev $(p, p)$-extension domain $\boz\subset\rn$ 
can well be $n$.

Much less is known when $q<p$. First of all, no geometric criteria is available even when $\boz$ is planar and Jordan. The only existing result related to (\ref{eq:regular}) is the generalized Ahlfors-type estimate
\begin{equation}\label{eq:Qregu}
\Phi(B(x, r))^{p-q}|B(x,r)\cap\Omega|^q\geq C|B(x, r)|^p
\end{equation}
from \cite{ukhlov1} (also see \cite{ukhlov1'}) for the case $n<q<p<\fz$. Here $\Phi$ is a bounded, monotone and countably additive set function, defined on open sets $U\subset\rn$ with $U\cap\boz\neq\emptyset$. It is generated by the 
extension property. By differentiating $\Phi$ with respect to the Lebesgue measure, one concludes that $|\partial\boz|=0$ if $\boz$ is a Sobolev $(p, q)$-extension domain with $n<q<p<\fz$.

Our first result gives the optimal capacitory version of (\ref{eq:Qregu}) for the full scale $1\leq q<p<\fz$. Towards the statement, we set $A(x;s,t)=B(x,t)\setminus B(x,s)$ when $x\in \rn$ and $0<s<t.$

\begin{thm}\label{thm:capa}
Let $\boz\subset\rn$ be a Sobolev $(p, q)$-extension domain with $1\leq q<p<\fz$. Then
there exists a nonnegative, bounded, monotone and countably additive set function $\Phi$, defined on open sets, such that, for every $x\in\partial {\boz}$ and each $0<r<\min\{1, \frac{1}{4}\diam(\boz)\}$, we have
\begin{equation}\label{eq:capden}
\Phi(B(x, r))^{p-q}|B(x, r)\cap\boz|^q\geq r^{pq}Cap_q\lf(\boz\cap B\lf(x, \frac{r}{4}\r), \boz\cap A\lf(x; \frac{r}{2}, \frac{3r}{4}\r); B(x, r)\r)^p.
\end{equation}
Here $Cap_q$ is the classical variational $q$-capacity.
\end{thm}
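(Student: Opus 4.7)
The strategy is to first construct a nonnegative, bounded, monotone, countably additive set function $\Phi$ associated with the extension operator $E\colon W^{1,p}(\boz)\to W^{1,q}(\rn)$, following the operator-to-measure machinery of Vodop'yanov and Ukhlov. Then I would prove the capacitory estimate \eqref{eq:capden} directly, by constructing an explicit admissible competitor for the $q$-capacity on the right-hand side as the $E$-extension of a Lipschitz bump defined on $\boz$.

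For the first step, the goal is to produce $\Phi$ satisfying the operator inequality
\[
\|\nabla Ef\|_{L^q(U)}^q\ \le\ \Phi(U)^{(p-q)/p}\,\|f\|_{W^{1,p}(\boz)}^{q}
\]
for every $f\in W^{1,p}(\boz)$ and every open $U\subset\rn$. Formally, one would take $\Phi(U)$ to be the $pq/(p-q)$-th power of the operator norm of the restricted map $f\mapsto \chi_U\,\nabla(Ef)$ from $W^{1,p}(\boz)$ to $L^q(\rn)$; boundedness by $\|E\|^{pq/(p-q)}$ and monotonicity are then immediate. The substantive property is countable additivity on disjoint open sets, which is a classical feature of such associated set functions in the regime $p>q$ and passes through a Maurey-type factorization of the sublinear operator $|\nabla E(\cdot)|\colon W^{1,p}(\boz)\to L^q(\rn)$, producing an auxiliary $L^{pq/(p-q)}$-weight whose $pq/(p-q)$-th power is an honest additive measure. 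I expect this operator-to-measure step to be the main technical obstacle.

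For the capacitory bound, fix $x\in\partial\boz$ and $0<r<\min\{1,\tfrac14\diam\boz\}$, and set
\[
f(y):=\max\bigl\{0,\,\min\{1,\,2-4|y-x|/r\}\bigr\},\qquad y\in\boz,
\]
so that $f\equiv 1$ on $\boz\cap B(x,r/4)$, $f\equiv 0$ on $\boz\setminus B(x,r/2)$, and $|\nabla f|\le 4/r$. Using $r<1$,
\[
\|f\|_{W^{1,p}(\boz)}^{p}\ \le\ C\,r^{-p}\,|\boz\cap B(x,r)|.
\]
Since $Ef=f$ a.e.\ on $\boz$, the restriction of $Ef$ to $B(x,r)$ lies in $W^{1,q}(B(x,r))$, equals $1$ on $\boz\cap B(x,r/4)$ and $0$ on $\boz\cap A(x;r/2,3r/4)$, and is therefore admissible for the capacity on the right-hand side of \eqref{eq:capden}. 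Hence
\[
Cap_q\bigl(\boz\cap B(x,r/4),\boz\cap A(x;r/2,3r/4);B(x,r)\bigr)\ \le\ \int_{B(x,r)}|\nabla Ef|^{q}.
\]
Combining this with the operator inequality from the first step applied to $U=B(x,r)$, substituting the bound on $\|f\|_{W^{1,p}(\boz)}$, and raising to the $p$-th power yields \eqref{eq:capden}, up to an absolute constant that is absorbed by rescaling $\Phi$ by a constant factor.
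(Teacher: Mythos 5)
Your second step, the capacity estimate, is essentially the paper's own argument: the same cut-off function $f$, the same bound $\|f\|_{W^{1,p}(\Omega)}^p\le C r^{-p}|B(x,r)\cap\Omega|$, and the use of $Ef$ as a competitor for the $q$-capacity (the paper additionally approximates $E(u)$ by continuous admissible functions, since its admissible class $\mathcal W_q$ consists of continuous functions; you should not simply assert admissibility of the Sobolev function $Ef$). The genuine gap is in your first step, the construction of $\Phi$. The set function you actually write down, namely $\Phi(U)=\bigl(\sup_{f\neq 0}\|\nabla Ef\|_{L^q(U)}/\|f\|_{W^{1,p}(\Omega)}\bigr)^{pq/(p-q)}$ with the supremum over all of $W^{1,p}(\Omega)$ and the global norm in the denominator, is nonnegative, bounded and monotone, but there is no reason for it to be countably additive, and formally it is not: in a rank-one model $f\mapsto \ell(f)\,g$ with $\ell$ a fixed bounded functional and $g\in L^q$ fixed, this quantity equals $\|\ell\|^{pq/(p-q)}\bigl(\int_U|g|^q\,dx\bigr)^{p/(p-q)}$, which is strictly superadditive on disjoint sets (subadditivity fails) because $p/(p-q)>1$. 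Additivity is not a formal consequence of $q<p$; it must be built into the definition.

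This is exactly where the paper's definition \eqref{setfunc11} differs from yours: the supremum there is restricted to $u\in W^p_0(U,\Omega)$, i.e.\ to test functions vanishing on $\Omega\setminus U$, and the denominator is the local norm $\|u\|_{W^{1,p}(U\cap\Omega)}$. With this localization, superadditivity is proved by summing disjointly supported near-extremizers (normalized using the homogeneity of $E$ provided by Lemma \ref{lem:homo}), and subadditivity by restricting a near-extremizer for $\bigcup_j U_j$ to each $U_j\cap\Omega$; neither manipulation is available for your global definition, where near-extremizers for different $U_j$ are unrelated global functions. Your proposed repair via a Maurey--Nikishin factorization of $f\mapsto|\nabla Ef|$ does not close the gap either: such factorization theorems need (sub)linearity of the operator together with lattice or $p$-convexity type hypotheses on the domain space, whereas here $E$ is only known to be homogeneous (the paper arranges homogeneity, not linearity, and explicitly does not know whether linearity can be assumed when $q<p$), and $W^{1,p}(\Omega)$ enters only as a subspace of an $L^p$-space. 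Since you yourself flag this operator-to-measure step as the ``main technical obstacle'' and leave it unproved, the proposal is incomplete at precisely the point where the content of the theorem lies; note also that for your final substitution it is harmless that your operator inequality carries the global norm, because your test function is supported in $B(x,r/2)\cap\Omega$, but the additivity statement of the theorem cannot be recovered this way.
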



Since the lower bound in (\ref{eq:capden}) comes with a term related to the capacitory size of a portion of $\boz$, let us analyze it carefully in the model case of an exterior spire of doubling order. More precisely, let $w:[0, \fz)\to[0, \fz)$ be continuous, increasing and differentiable with $w(0)=0$, $w(1)=1$ and so that $w(2t)\leq Cw(t)$ for all $t>0$. We also require $w'$ to be increasing on $(0,1)$ with $\lim_{t\to0^+}w'(t)=0.$ We define
\begin{equation}\label{eq:Cusp}
\boz^n_w:=\{z=(t, x)\in\rr\times\rr^{n-1}:0<t\leq 1, |x|<w(t)\}\cup B^{n}((2, 0), \sqrt{2}).
\end{equation}
See Figure $1$. We call $\boz^n_w$ an outward cusp domain with a doubling cusp function $w$. The boundary of $\boz^n_w$ contains an exterior spire of order $w$ at the origin.
\begin{figure}[h]
\centering
\includegraphics[width=8cm]{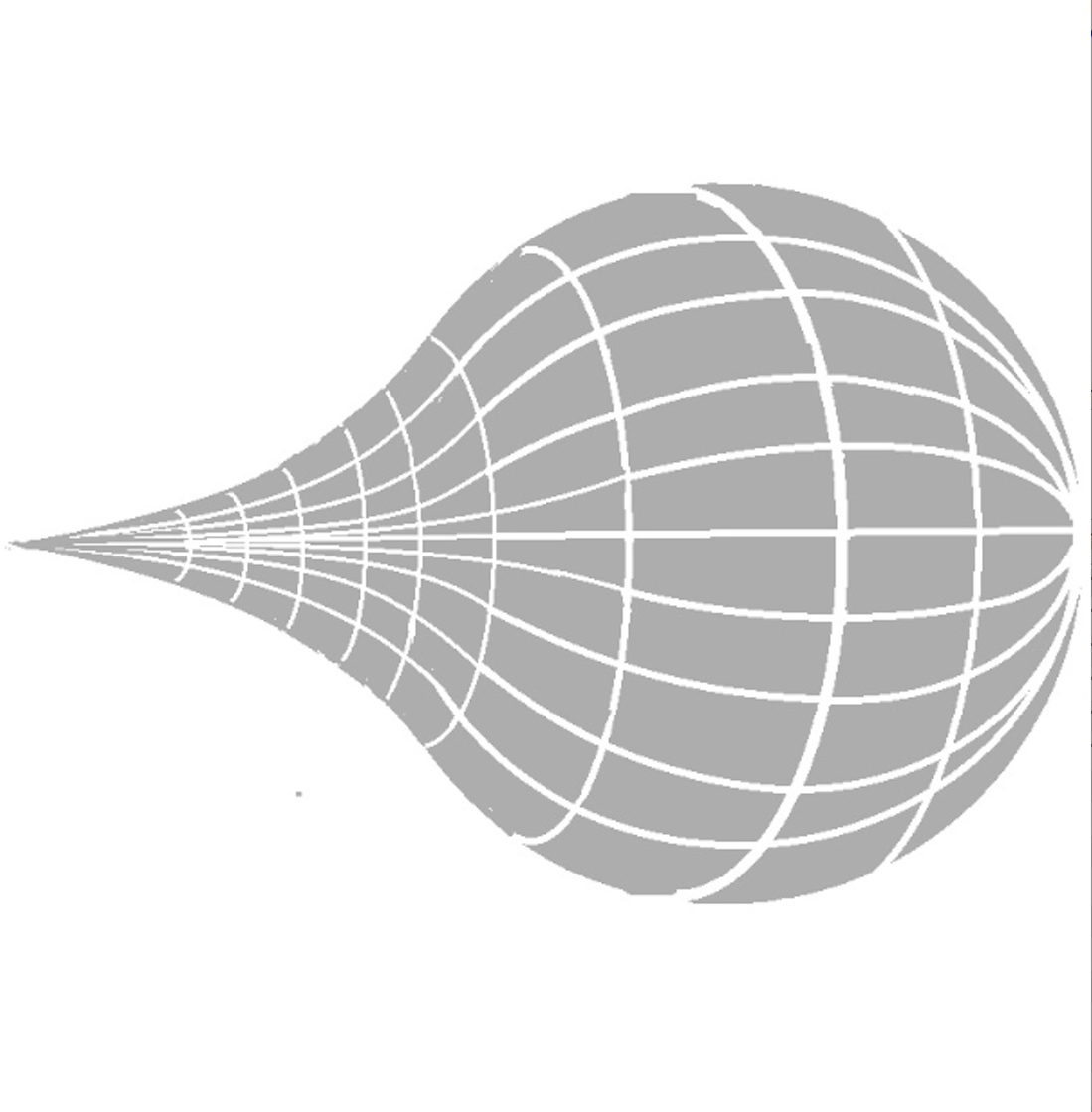}
\caption{An outward cuspidal domain.}
\label{fig:cups}
\end{figure}

We write $A\sim_cB$ if $\frac{1}{c}A\leq B\leq cA$ for a constant $c>1$. The following theorem gives the sharp capacitory estimate at the origin for the outward cusp domain $\boz^n_w$.

\begin{thm}\label{lem:cacusp}
Let $n\geq 3,$ and let $\boz^n_{w}\subset\rn$ be an outward cusp domain with a doubling cusp function $w$. Then, for every $0<r<1$, we have
\begin{multline}\label{eq:cacuspH}
Cap_p\lf(\boz^n_w\cap B\lf(0,\frac{r}{4}\r),\boz^n_{w}\cap A\lf(0;\frac{r}{2}, \frac{3r}{4}\r); B(0, r)\r)\sim_c\\
Cap_p\lf(\boz^n_w\cap B\lf(0,\frac{r}{4}\r), A\lf(0; \frac{r}{2}, \frac{3r}{4}\r); B(0, r)\r)\sim_c\\\begin{cases}
r^{n-p} &\textnormal{if }\ n-1<p<\fz\\
\frac{r}{\log^{n-2}\frac{r}{w(r)}} &\textnormal{if }\ p=n-1\\
r(w(r))^{n-1-p} &\textnormal{if }\ 1\leq p<n-1\, \end{cases}
\end{multline}
where the constant $c$ is independent of $r$.
\end{thm}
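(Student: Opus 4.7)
The plan is to produce matching upper and lower bounds for both the middle and the first capacity in \eqref{eq:cacuspH}, each reducing to the explicit right-hand side. The inequality ``first capacity $\le$ middle capacity'' is free from monotonicity, since $\boz^n_w \cap A(0;r/2,3r/4)\subset A(0;r/2,3r/4)$, and in the opposite direction every test function we use for the upper bound will vanish on the whole annulus, so the middle capacity is simultaneously bounded by the same quantity. It therefore suffices to (i) produce an admissible $u$ whose energy matches the claimed upper bound and (ii) prove a matching lower bound valid for any function admissible for the first capacity. The three regimes $p>n-1$, $p=n-1$, and $p<n-1$ are handled separately, reflecting the capacitary transition at codimension one.

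For (i), when $p>n-1$ the standard radial ring capacitor $u(y)=\psi(|y|)$ between $B(0,r/4)$ and $\partial B(0,r/2)$ on $B(0,r)$ is admissible, with energy $\sim r^{n-p}$. When $p<n-1$ we localize near the cusp: take $u(t,\hat y)=f(t)\,g(|\hat y|/w(t))$, where $f$ is a piecewise-linear cutoff equal to $1$ on $[0,r/4]$ and $0$ outside $[0,r/2]$, $g\colon[0,\infty)\to[0,1]$ is a fixed smooth cutoff with $g\equiv 1$ on $[0,1]$ and $g\equiv 0$ on $[2,\infty)$, and $u$ is extended by $0$ to the rest of $B(0,r)$. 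A direct calculation yields
\[
\int_{B(0,r)}|\nabla u|^p\lesssim \int_0^{r/2}\Bigl[|f'(t)|^p w(t)^{n-1}+f(t)^p w(t)^{n-1-p}\Bigr]\,dt,
\]
and the doubling of $w$ forces the second (Dirichlet-in-$\hat y$) term to dominate at $\sim r\,w(r)^{n-1-p}$. For $p=n-1$ one replaces $g$ by the logarithmic profile $g(s)=\log(2/s)/\log 2$ on $[1,2]$, producing the factor $\log^{n-2}(r/w(r))$ in the denominator.

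For (ii), fix an admissible $u$. When $p>n-1$ the cusp $\boz^n_w\cap B(0,r/4)$ contains a rectifiable arc of length $\sim r$ (its axial segment), and classical estimates show that the $p$-capacity of such an arc in $B(0,r)$ relative to a set at distance $\sim r$ is of order $r^{n-p}$; applied to $u$ via the capacity inequality this yields the required lower bound $\gtrsim r^{n-p}$. When $p<n-1$ the plan is a slicewise $(n-1)$-dimensional capacity estimate: for each height $t$ in a subset $S\subset[r/4,r/2]$ of measure $\sim r$, one argues that the restriction $u(t,\cdot)$ is close to $1$ on the cusp cross-section $\{|\hat y|<w(t)\}$ and close to $0$ on a much larger $(n-1)$-dimensional ball, whereupon the classical bound $\ccap_p^{n-1}\bigl(\overline{B^{n-1}(0,w(t))},\partial B^{n-1}(0,cr)\bigr)\sim w(t)^{n-1-p}$ gives $\int|\nabla_{\hat y}u(t,\cdot)|^p\,d\hat y\gtrsim w(t)^{n-1-p}$; integrating in $t$ and using the doubling of $w$ produces the claimed $r\,w(r)^{n-1-p}$, with the $p=n-1$ case receiving an analogous logarithmic modification. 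The main obstacle is identifying the set $S$: a naive axial-slicing argument gives only the much weaker intrinsic bound $w(r)^{n-1}/r^{p-1}$, and upgrading it requires combining the global constraint $u=0$ on $\boz^n_w\cap A$ with a Poincar\'e-type inequality on vertical tubes through $B(0,r)$ to force $u(t,\cdot)$ to have a small average outside the cusp for a positive-density set of heights $t$.
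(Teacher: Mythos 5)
Your upper bounds and the monotonicity inequality between the two capacities are essentially the paper's (the paper multiplies the radial cutoff by a transverse cutoff $F_1$, and by a logarithmic cutoff $F_2$ when $p=n-1$); apart from a small admissibility slip --- your product $f(t)g(|\hat y|/w(t))$ is positive at some points of the full annulus $A(0;\frac r2,\frac{3r}{4})$ with $t$ just below $r/2$, which is trivially repaired --- that half is fine. The genuine gap is the lower bound for $1\le p\le n-1$, exactly the step you call ``the main obstacle'', and the mechanism you propose to close it cannot work. No horizontal slice at height $t\in(\frac r4,\frac r2)$ carries any constraint: $u\ge 1$ is imposed only on $\Omega^n_w\cap B(0,\frac r4)$ (heights $\lesssim r/4$), and $u\le 0$ only on $\Omega^n_w\cap A(0;\frac r2,\frac{3r}{4})$ (heights $\gtrsim \frac r2-Cw(r)$). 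Consider the near-minimizer $u(t,\hat y)=1-\eta(t)\,h\bigl(|\hat y|/w(t)\bigr)$, where $h\equiv 1$ on $[0,1]$, $h(s)=2-s$ on $[1,2]$, $h\equiv 0$ beyond, and $\eta$ is a cutoff equal to $1$ on $[\frac r2-2w(r),\frac{3r}{4}]$ and supported in $[\frac r2-4w(r),\frac{3r}{4}+w(r)]$ (take $w(r)\le r/16$, the interesting regime). This $u$ is admissible for the pair $\bigl(\Omega^n_w\cap B(0,\frac r4),\,\Omega^n_w\cap A(0;\frac r2,\frac{3r}{4})\bigr)$, its energy is comparable to $r\,w(r)^{n-1-p}$, on every slice it equals $1$ outside a tube of radius $2w(t)$, and it is identically $1$ on all slices with $t\le\frac r2-4w(r)$. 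So there is no set $S\subset[\frac r4,\frac r2]$ of measure $\sim r$ on which $u(t,\cdot)$ has small average on a large $(n-1)$-ball, no Poincar\'e inequality on vertical tubes can manufacture one, and your per-slice estimate $\int|\nabla_{\hat y}u(t,\cdot)|^p\,d\hat y\gtrsim w(t)^{n-1-p}$ fails except on a set of heights of measure $O(w(r))$. The mirror competitor hugging the other needle defeats slices taken in $[\frac r2,\frac{3r}{4}]$ just as well: the transition layer of a near-minimizer can hide in an $O(w(r))$-neighbourhood of either plate, so any hyperplane-slicing scheme with a prescribed range of heights is doomed.

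This is precisely what the paper's slicing avoids: it integrates over the spheres $S_\rho$ centred at $(\frac{3r}{8},0,\dots,0)$ with $\rho\in(\frac r5,\frac r4)$. Each such sphere meets $\Omega^n_w\cap B(0,\frac r4)$ and $\Omega^n_w\setminus B(0,\frac r2)\subset\Omega^n_w\cap A(0;\frac r2,\frac{3r}{4})$ in caps $A^+_1(\rho),A^+_0(\rho)$ of $\mathcal H^{n-1}$-measure comparable to $w(r)^{n-1}$, so every slice sees both constraints no matter where the transition sits. For $1\le p<n-1$ one transports $S^+_\rho$ bi-Lipschitzly onto an $(n-1)$-disk and applies the Sobolev--Poincar\'e inequality at the critical exponent $p^\star=\frac{(n-1)p}{n-1-p}$; comparing the spherical mean with $1/2$ and using whichever cap is favorable converts the cap measure $w^{n-1}$ into exactly $w^{n-1-p}$, and integrating in $\rho$ gives $r\,w(r)^{n-1-p}$. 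For $p=n-1$ this is not a routine ``logarithmic modification'': the paper argues via the fundamental theorem of calculus along segments joining the two caps inside each disk, combined with H\"older's inequality against the weight $|x-z_i|^{n-2}$, which is what produces the factor $\log^{n-2}(r/w(r))$. Finally, your $p>n-1$ lower bound is stated too loosely: the $p$-capacity of an arc of diameter $\sim r$ ``relative to a set at distance $\sim r$'' is not in general $\gtrsim r^{n-p}$ (for $p\le n$ the other plate could be a point, of zero capacity); what is needed, and what the paper's Lemma \ref{lem2.1} uses, is that $\Omega^n_w\cap A(0;\frac r2,\frac{3r}{4})$ also contains a continuum of diameter $\gtrsim r$ --- true here, but it must be invoked.
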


The $(p,q)$-extendability properties for the domains $\boz^n_{w}$ are known by 
\cite{Mazya1,Mazya2,Mazya,Mazya3}.
We show in Section 5 that these domains give examples of settings where the exponents in \eqref{eq:capden} are optimal and where boundedness of $\Phi$ cannot be replaced,
say, by an estimate of the type $\Phi(B(x,r))\le Cr^\alpha.$ Besides of boundedness, the other crucial property of our set function $\Phi$ is additivity. It allows one to obtain better volume estimates when the center of $B(x,r)$ does not belong to a suitable exceptional set.  These estimates are shown to be sharp in Section 5 for wedges generated by $\boz^n_{w}.$ 

In order to effectively use \eqref{eq:capden}, one needs an estimate for the respective capacitory term.
For this, we employ the notion of a $q$-capacitory dense domain. Roughly, a domain $\boz\subset\rn$ is $q$-capacitory dense at a point $x\in\rn$, if there exists a decreasing sequence $\{r_i\}$ converging to zero so that the $q$-capacity of $\boz\cap B\lf(x, \frac{r_i}{4}\r)$ and of $\boz\cap A\lf(x; \frac{r_i}{2}, \frac{3r_i}{4}\r)$ in $B(x, r_i)$ are comparable to the $q$-capacity of the pair $B\lf(x, \frac{r_i}{4}\r)$ and $A\lf(x;\frac{r_i}{2}, \frac{3r_i}{4}\r)$ in $B(x, r_i)$ with an absolute constant.  We say that a condition holds for almost every $x\in \partial \boz$ if there is a set $E\subset \partial \Omega$ of volume zero so that the condition holds on $\partial \boz \setminus E.$

We deduce the following generalized Ahlfors-type measure density estimate from (\ref{eq:capden}).

\begin{cor}\label{thm:poicade}
Let $\boz\subset\rn$ be a Sobolev $(p, q)$-extension domain which is $q$-capacitory dense at almost every $x\in\partial\boz$, where $1\leq q<p<\fz$. Then there exists  a nonnegative, bounded, monotone and countably additive set function $\Phi,$ defined on open sets, with the following property. For almost every $x\in\partial\boz$, we have
\begin{equation}
\limsup_{r\to 0^+}\frac{\Phi(B(x,r))^{p-q}|B(x,r)\cap\boz|^q}{|B(x,r)|^p}>0.\nonumber
\end{equation}
\end{cor}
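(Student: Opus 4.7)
The plan is to feed the capacitory density at $x$ into Theorem \ref{thm:capa}, and then to estimate the resulting ``standard'' condenser capacity by a simple scaling argument.

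First I would invoke Theorem \ref{thm:capa} on the extension domain $\boz$ to produce the set function $\Phi$, which is nonnegative, bounded, monotone and countably additive on open sets and satisfies \eqref{eq:capden} at every $x\in\partial\boz$ and every admissible radius. I then fix a point $x\in\partial\boz$ at which $\boz$ is $q$-capacitory dense; by hypothesis this holds for almost every $x\in\partial\boz$. Along the resulting sequence $r_i\to 0^+$, the capacity term appearing on the right-hand side of \eqref{eq:capden} is comparable, with an absolute constant $c_0>0$, to the standard condenser capacity
$$Cap_q\lf(B(x,r_i/4),A(x;r_i/2,3r_i/4);B(x,r_i)\r).$$

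Next I would compute this standard capacity via the scaling $u\mapsto u(\cdot/r)$ of admissible functions, which yields
$$Cap_q\lf(B(x,r/4),A(x;r/2,3r/4);B(x,r)\r)=c_{n,q}\,r^{n-q}$$
with a strictly positive constant $c_{n,q}$ depending only on $n$ and $q$. The positivity of $c_{n,q}$ is read off from the explicit radial minimizer when $1\leq q<n$, from the conformal invariance of the $n$-capacity when $q=n$, and from Morrey's inequality applied to admissible functions (which are forced to be continuous and to vary from $1$ to $0$ across the annulus) when $q>n$.

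Combining these two ingredients with \eqref{eq:capden}, I obtain, along the sequence $r_i$,
$$\Phi(B(x,r_i))^{p-q}|B(x,r_i)\cap\boz|^q\geq r_i^{pq}\cdot c_0^{\,p}\,c_{n,q}^{\,p}\,r_i^{(n-q)p}=C\,r_i^{pn},$$
while $|B(x,r_i)|^p\sim r_i^{pn}$. The ratio in the statement is therefore bounded below uniformly in $i$, so its $\limsup$ as $r\to 0^+$ is strictly positive, as claimed. The argument is short once Theorem \ref{thm:capa} is in place; the only point requiring care is checking that the scaling constant $c_{n,q}$ is positive uniformly across the three ranges $q<n$, $q=n$, and $q>n$, so that the estimate works for the full scale $1\leq q<p<\fz$.
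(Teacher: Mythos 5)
Your argument is correct, but it takes a genuinely different route from the paper's. The paper proves this corollary by combining Proposition \ref{le:stro} (capacitory density implies $q$-fatness) with Theorem \ref{thm:point}, whose proof goes through the fine-topology machinery (quasicontinuity and Lemma \ref{lem:u=1}) and a Poincar\'e inequality argument; that route yields the stronger estimate \eqref{equa:meade} for \emph{all} sufficiently small radii $0<r<r_x$, from which the stated $\limsup$ property is immediate. You instead feed the $q$-capacitory density hypothesis directly into \eqref{eq:capden}: along the sequence $r_i\to 0$ furnished by Definition \ref{de:capden}, the capacity on the right-hand side is at least $\delta_x\,Cap_q\lf(B(x,r_i/4),A(x;r_i/2,3r_i/4);B(x,r_i)\r)=\delta_x c_{n,q}r_i^{n-q}$, and since $pq+(n-q)p=np$ this gives $\Phi(B(x,r_i))^{p-q}|B(x,r_i)\cap\boz|^q\geq \delta_x^p c_{n,q}^p r_i^{np}$, hence a positive $\limsup$. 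This is precisely the deduction promised in the introduction (``we deduce \dots from \eqref{eq:capden}'') and is more elementary---no fine topology, no Poincar\'e inequality---but it only produces the inequality along the sequence $r_i$, whereas the paper's detour through fatness upgrades it to every small $r$. Two points to tighten: the comparability constant coming from Definition \ref{de:capden} is $\delta_x$, which depends on $x$ rather than being absolute (harmless here, since the conclusion is pointwise); and for the positivity of $c_{n,q}$ over the full range $1\leq q<\fz$, your appeals to the explicit radial minimizer and to conformal invariance give scaling information and upper bounds rather than the needed lower bound---the cleanest fix is Lemma \ref{lem2.1} for $q>n-1$ together with a radial-segment plus H\"older argument valid for all $q\geq 1$, the fact itself being standard and true.
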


In potential theory, the concept of fatness often leads to sharper results than the notion of capacity density. Roughly, a domain $\boz$ is $q$-fat at a point $x\in\rn$, if the $q$-capacity of $\boz\cap B(x, r)$ is not very small in average, when compared to the $q$-capacity of the ball $B(x, r).$ The following observation shows that $q$-capacitory density implies $q$-fatness. 

\begin{prop}\label{le:stro}
Let $n\geq 3$. If a domain $\boz\subset\rn$ is $q$-capacitory dense at a point $x\in\rn$ for some $1\leq q<\fz$, then $\boz$ is also $q$-fat at $x$. On the other hand, for arbitrary $1<q\leq n-1$, there exists $w$ so that the domain $\boz^n_w$  is $q$-fat but not $q$-capacitory dense at the origin. 
\end{prop}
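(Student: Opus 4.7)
The plan is to prove part (i) by a test-function comparison combined with a standard capacity localization argument, and part (ii) by applying Theorem \ref{lem:cacusp} to $\boz^n_w$ with a suitably slow logarithmic cusp function $w$.

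For part (i), let $\{r_i\}\to 0$ be the sequence witnessing $q$-capacitory density of $\boz$ at $x$. The key observation is that any function admissible for the single-set capacity $Cap_q(\boz\cap B(x,r_i/4);B(x,r_i/2))$, when extended by zero to the remainder of $B(x,r_i)$, is admissible for the condenser capacity in the definition of $q$-capacitory density, since the zero extension vanishes on $A(x;r_i/2,3r_i/4)$ and hence on $\boz\cap A(x;r_i/2,3r_i/4)$. Taking infima yields
\[
Cap_q\lf(\boz\cap B(x,r_i/4),\boz\cap A(x;r_i/2,3r_i/4);B(x,r_i)\r)\leq Cap_q\lf(\boz\cap B(x,r_i/4);B(x,r_i/2)\r).
\]
Combining with the density hypothesis and the elementary identity $Cap_q(B(x,r_i/4),A(x;r_i/2,3r_i/4);B(x,r_i))\sim r_i^{n-q}$ gives $Cap_q(\boz\cap B(x,r_i/4);B(x,r_i/2))\gtrsim r_i^{n-q}$. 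Using the standard comparison of $q$-capacities of a compact set in concentric balls of comparable radii, this lower bound transfers to all scales $r\in [r_i/4,r_i/2]$, producing $Cap_q(\boz\cap B(x,r);B(x,2r))\gtrsim r^{n-q}$ on each such interval. Hence the normalized fatness ratio stays above a positive constant on disjoint intervals of uniformly positive logarithmic length, and the Wiener-type $q$-fatness integral diverges.

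For part (ii), fix $1<q\leq n-1$ and apply Theorem \ref{lem:cacusp}. For every doubling cusp function $w$ the restricted condenser capacity satisfies $\sim r\,w(r)^{n-1-q}$ when $q<n-1$ and $\sim r/\log^{n-2}(r/w(r))$ when $q=n-1$, while the unrestricted condenser capacity is $\sim r^{n-q}$. Since $w(r)/r\to 0$ by assumption, the ratio tends to zero in both cases, so no choice of $w$ renders $\boz^n_w$ $q$-capacitory dense at the origin. To ensure $q$-fatness, choose $w$ with logarithmic decay: $w(r)\sim r/(\log(1/r))^{(q-1)/(n-1-q)}$ for $1<q<n-1$ and $w(r)\sim r/\log(1/r)$ for $q=n-1$, smoothly adjusted near $r=1$ so that $w(1)=1$, $w$ is doubling, and $w'$ is increasing on $(0,1)$ with $w'(0^+)=0$. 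Under the substitution $u=\log(1/r)$, the Wiener-type fatness integral reduces to $\int^\infty du/u$ in the first case and to $\int^\infty du/\log u$ in the second, both of which diverge, so $\boz^n_w$ is $q$-fat at the origin.

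The main obstacle will be the propagation step in part (i): the witnessing sequence $\{r_i\}$ may be arbitrarily sparse, so the capacity comparison must be executed so that the normalized capacity ratio on each interval $[r_i/4,r_i/2]$ is controlled by an absolute constant independent of $i$. Once this uniform control is in place, the logarithmic summation for part (i) and the elementary integrations for part (ii) close both arguments.
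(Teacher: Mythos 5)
Your proposal is correct and follows essentially the paper's own route: in part (i) you propagate the density bound from the witnessing scales $r_i$ to whole intervals $[r_i/4,r_i/2]$ with constants independent of $i$ (the paper does this propagation by an explicit radial rescaling of test functions between the two condenser configurations rather than by quoting standard comparability of relative capacities under dilation of the ambient ball) and then sum the Wiener integral over disjoint intervals after passing to a subsequence, while in part (ii) you apply Theorem \ref{lem:cacusp} with logarithmically modified cusps, which for $1<q<n-1$ is exactly the paper's choice $w(t)=t\log^{-(q-1)/(n-1-q)}(e/t)$ and at $q=n-1$ is a workable alternative to the paper's simpler $w(t)=t^2$. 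The only point to patch is $q=1$, where the paper defines fatness by a limit rather than a Wiener-type integral; there your capacity lower bound along the sequence $r_i$ already contradicts $1$-thinness directly, which is how the paper treats that case.
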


Consequently, Corollary \ref{thm:poicade} is also a corollary to the following stronger result. 

\begin{thm}\label{thm:point}
Let $\boz\subset\rn$ be a Sobolev $(p, q)$-extension domain which is $q$-fat at almost every $x\in\partial\boz$, where $1\leq q< p<\fz$. Then there exists a nonnegative, bounded, monotone and countably additive set function $\Phi,$ defined on open sets, with the following property. For almost every $x\in\partial\boz$, there exists $r_x>0$ such that, for every $0<r<r_x$, we have
\begin{equation}\label{equa:meade}
\Phi(B(x,r))^{p-q}|B(x,r)\cap\boz|^q\geq |B(x,r)|^p.
\end{equation}
\end{thm}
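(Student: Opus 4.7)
The plan is to reduce Theorem \ref{thm:point} to a capacitory lower bound via Theorem \ref{thm:capa}, and then extract that lower bound from $q$-fatness using the self-improvement of Wiener-type capacity density.

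By Theorem \ref{thm:capa}, there already exists a nonnegative, bounded, monotone and countably additive set function $\Phi$ defined on open sets for which
$$\Phi(B(x, r))^{p-q}|B(x, r)\cap\boz|^q\ge r^{pq}\,Cap_q\!\lf(\boz\cap B\!\lf(x, \tfrac{r}{4}\r),\,\boz\cap A\!\lf(x; \tfrac{r}{2}, \tfrac{3r}{4}\r);\, B(x, r)\r)^p$$
holds at every $x\in\partial\boz$ and every $0<r<\min\{1,\tfrac14\diam(\boz)\}.$ Since $r^{pq}(r^{n-q})^p=r^{pn}\simeq|B(x,r)|^p,$ the conclusion \eqref{equa:meade} follows, after harmlessly absorbing an absolute constant into $\Phi,$ once we show that, for almost every $x\in\partial\boz,$ there is $r_x>0$ so that, for every $0<r<r_x,$
$$Cap_q\!\lf(\boz\cap B\!\lf(x,\tfrac{r}{4}\r),\,\boz\cap A\!\lf(x;\tfrac{r}{2},\tfrac{3r}{4}\r);\,B(x,r)\r)\ge c\, r^{n-q}.$$

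To obtain this capacity lower bound I would first trade the condenser with full $\boz$-plates for a more standard one: a cut-off $\varphi\in C^\infty_c(B(x,r))$ with $\varphi\equiv 1$ on $B(x,r/4)$ and $|\nabla\varphi|\ls 1/r$ shows that, provided $\boz$ has enough annular mass at $x,$
$$Cap_q\!\lf(\boz\cap B\!\lf(x,\tfrac{r}{4}\r),\,\boz\cap A\!\lf(x;\tfrac{r}{2},\tfrac{3r}{4}\r);\,B(x,r)\r)\simeq Cap_q\!\lf(\boz\cap \overline{B}\!\lf(x,\tfrac{r}{4}\r);\,B\!\lf(x,\tfrac{r}{2}\r)\r)$$
with constants independent of $r.$ The required annular mass holds at almost every boundary point, either by applying the $q$-fatness hypothesis to a suitable nearby point of $\partial\boz$ or by invoking the Lebesgue density theorem for $\boz$ once the earlier part of the paper has discarded a boundary set of measure zero.

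The crux of the proof is then to upgrade $q$-fatness at $x,$ which is an averaged-in-scale Wiener-type condition on $Cap_q(\boz\cap B(x,r);B(x,2r)),$ into a capacity bound that holds at \emph{every} sufficiently small scale. For this I would invoke the self-improvement principle of nonlinear potential theory, in the spirit of Lewis and Kilpel\"ainen--Mal\'y: if the $q$-Wiener integral for $\boz$ at $x$ diverges, then in fact $Cap_q(\boz\cap \overline{B}(x,r/4);B(x,r/2))\ge c\,r^{n-q}$ uniformly for all small $r.$ Combined with the annular reduction above, this supplies the desired lower bound on the condenser capacity, and substitution into the inequality of Theorem \ref{thm:capa} yields \eqref{equa:meade}. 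The main obstacle is precisely this self-improvement step; the borderline case $q=1$ and the exceptional zero-measure set $E\subset\partial\boz$ require extra care, most naturally via an approximation argument in $q$ and an appeal to the countable additivity of $\Phi.$
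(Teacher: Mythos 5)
There is a genuine gap. Your plan hinges on the claim that $q$-fatness at $x$ self-improves to a uniform capacity estimate $Cap_q\lf(\boz\cap B(x,\frac r4),\boz\cap A(x;\frac r2,\frac{3r}4);B(x,r)\r)\ge c\,r^{n-q}$ for all small $r$, citing Lewis and Kilpel\"ainen--Mal\'y. Those self-improvement results concern uniformly fat sets, i.e.\ sets satisfying a capacity density condition at every point and every scale with a uniform constant (the conclusion being an improved exponent); they do not upgrade the pointwise divergence of the Wiener-type series, which is what $q$-fatness in Definition \ref{defn:p-thin} means, into a capacity density bound. In fact the paper's Proposition \ref{le:stro} exhibits, for every $1<q\le n-1$, an outward cusp domain that is $q$-fat at the origin but not $q$-capacitory dense there; by Theorem \ref{lem:cacusp} the relevant capacity at the tip is comparable to $r^{n-q}/\log^{q-1}\frac er$, which is $o(r^{n-q})$, so your key estimate fails even along a sequence of radii, let alone for all $r<r_x$. (Even the stronger hypothesis of $q$-capacitory density would only give such a bound along a sequence of scales, whereas \eqref{equa:meade} is asserted for every small $r$.) A secondary problem is your ``annular reduction'': enlarging the zero-plate from $\boz\cap A(x;\frac r2,\frac{3r}4)$ to a full annulus only increases the capacity, so the comparability you assert needs a genuine argument and extra hypotheses; and Theorem \ref{thm:caF} shows that for $q<n-1$ the two-plate capacity can be essentially arbitrarily small on a positive-measure subset of the boundary of a Sobolev $(p,q)$-extension domain, so no unconditional $c\,r^{n-q}$ bound is available in the range where the theorem is most interesting.

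The paper's proof avoids any $r^{n-q}$ capacity bound altogether. It fixes a Lebesgue density point $x$ of $\overline\boz$ at which $\boz$ is $q$-fat, takes the same cut-off test function $u$ as in the proof of Theorem \ref{thm:capa}, and uses $q$-fatness only through quasi-continuity and fine continuity (Lemmas \ref{lem:qsandfi} and \ref{lem:u=1}) to conclude that $E(u)=1$ a.e.\ on $B(x,\frac r4)\cap\overline\boz$ and $E(u)=0$ a.e.\ on $(B(x,r)\setminus B(x,\frac r2))\cap\overline\boz$. Since $x$ is a density point of $\overline\boz$, both of these sets have volume comparable to $|B(x,r)|$, so the $q$-Poincar\'e inequality on $B(x,r)$ gives $\int_{B(x,r)}|\nabla E(u)|^q\,dy\ge C|B(x,r)|/r^q$; combined with \eqref{eq:SNu} and \eqref{eq:normin} this yields \eqref{equa:meade} after rescaling $\Phi$. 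This explains why the conclusion holds only almost everywhere and only for $0<r<r_x$: the threshold $r_x$ comes from the Lebesgue density point, not from any capacity self-improvement. To salvage your approach you would need to replace the capacity lower bound by this fine-continuity plus Poincar\'e argument (or restrict to $q>n-1$, where Remark \ref{vahva} does give the uniform $r^{n-q}$ bound and your reduction works).
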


\vskip 0.3 cm

Our next result clarifies the role of $q$ in the validity of fatness and capacitory density.

\begin{thm}\label{thm:q-fat}
Let $\Omega\subset \rr^2$ be a domain. Then $\Omega$ is $q$-capacitory dense at each $x\in\partial\boz$, for every $1\leq q<\fz$. 

Let $n\geq 3$ and let $\Omega\subset\rn$ be a domain. Then $\Omega$ is $q$-capacitory dense at each $x\in\partial\boz$ when $n-1<q<\fz$. Conversely, if $1\leq q\le n-1$, then there exists a domain $\Omega\subset\rn$ with $|\partial\boz|>0$ such that $\boz$ fails to be $q$-fat at points of a subset of positive volume of $\partial\boz$.
\end{thm}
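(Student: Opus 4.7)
The plan is to prove the three assertions of Theorem \ref{thm:q-fat} separately. For the two positive statements (all $q\geq 1$ when $n=2$, and $q>n-1$ when $n\geq 3$), I would exploit the path-connectedness of the domain $\boz$ to exhibit, at a suitable sequence of scales $r_i\to 0$, two ``transversal'' continua inside the two restricting regions of the condenser. Fix $x\in\partial\boz$, pick $y_i\in\boz$ with $y_i\to x$, and fix a reference point $z\in\boz$ at some fixed distance from $x$. Joining $y_i$ to $z$ by a continuous curve $\gamma_i\subset\boz$ and setting $r_i:=8|y_i-x|$, a standard sub-arc extraction from $\gamma_i$ produces a continuum $E_i\subset\boz\cap\overline{B(x,r_i/4)}$ of diameter at least $r_i/8$ reaching $\partial B(x,r_i/4)$, and a continuum $F_i\subset\boz\cap\overline{A(x;r_i/2,3r_i/4)}$ whose endpoints lie on the two bounding spheres of that annulus. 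Monotonicity of the condenser capacity then reduces the task to the scale-invariant comparison
\[
Cap_q(E_i,F_i;B(x,r_i))\gs Cap_q(B(x,r_i/4),A(x;r_i/2,3r_i/4);B(x,r_i)).
\]

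This last inequality is a classical capacity fact: the $q$-capacity of the condenser formed by a radial and a transversal continuum is comparable to the corresponding ball--annulus condenser precisely when one-dimensional continua in $\rn$ carry $q$-capacity comparable to balls of the same diameter, i.e.\ when $q>n-1$, and in the planar case $n=2$ for every $q\geq 1$ (the borderline value $q=1$ being treated by a perimeter/$1$-modulus argument). After rescaling to unit size, the lower bound can be proved by producing an explicit Lipschitz test function built from the normalised distance to $E_i$, or equivalently by a $q$-modulus estimate for the family of curves in $B(x,r_i)$ joining $E_i$ to $F_i$. This gives the first two assertions.

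For the counterexample in the third part, I would construct a connected open set $\boz\subset\rn$ with $|\partial\boz|>0$ that, at each point of a fat Cantor-type set $K\subset\partial\boz$ of positive $n$-measure, locally looks like an outward cusp domain $\boz^n_w$ from \eqref{eq:Cusp}. By Theorem \ref{lem:cacusp}, the relative $q$-capacity of $\boz^n_w$ inside $B(0,r)$ is of order $r(w(r))^{n-1-q}$ for $1\leq q<n-1$ (with a logarithmic correction at $q=n-1$), and this is much smaller than the unrestricted $r^{n-q}$ as soon as $w(r)/r\to 0$ fast enough. Placing such cusps at every point of $K$ and gluing them together via thin corridors into a single domain yields the desired $\boz$; a local, scale-by-scale application of Theorem \ref{lem:cacusp} at each $x\in K$ shows that the averaged Wiener-type sum defining $q$-fatness is finite there, so $\boz$ fails $q$-fatness throughout $K$.

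The principal obstacle I expect is the converse construction: one must arrange that the aggregate of cusps glued by thin corridors is a single open connected set, that $|\partial\boz|>0$, and that $q$-fatness fails uniformly at every $x\in K$. The interleaving of cusp and corridor scales must be calibrated so that the cusp geometry is preserved in the local picture around each $x\in K$, despite the accumulation of neighbouring cusps and passages. In the positive direction, the main subtlety is verifying the transversal-continuum / condenser comparison uniformly in scale at the borderline exponents $q=n-1$ and $q=n$.
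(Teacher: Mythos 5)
Your argument for the two positive assertions is essentially the paper's: connectedness of $\boz$ produces two continua of diameter comparable to the scale, one in $\boz\cap B(x,r/4)$ and one in $\boz\cap A(x;r/2,3r/4)$, and then the Teichm\"uller/Loewner-type lower bound (the paper's Lemma \ref{lem2.1}, valid for $q>n-1$ and for $q=1$, $n=2$) gives $Cap_q\gtrsim r^{n-q}$, which is comparable to the ball--annulus capacity by \eqref{equa:cap}. The only cosmetic difference is that you work along a sequence $r_i\to0$ while the paper gets the bound at every scale (which is what yields Remark \ref{vahva} and later Corollary \ref{globaali}); for the capacitory-density statement itself your version suffices.

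The converse ($1\le q\le n-1$) is where your proposal has a genuine gap. You propose a domain that ``at each point of a fat Cantor-type set $K$ of positive volume locally looks like an outward cusp domain'' and then want to apply Theorem \ref{lem:cacusp} scale-by-scale at each $x\in K$. This cannot work as stated: Theorem \ref{lem:cacusp} estimates the capacity at the tip of a single cusp, and a configuration in which every point of a set of positive $n$-measure is a cusp tip with the cusp attached at that point is not realizable (cusp tips of such a construction form at most a countable set, and at the remaining points of $K$ the local picture is an accumulation of many thin pieces and corridors, not a single cusp). What is actually needed --- and what the paper does via Theorem \ref{thm:caF} and Remark \ref{rajatapaus} --- is a construction in which countably many thin tubes accumulate on a positive-measure ``wall'' $E^{n-1}\times[1,2]$ contained in $\partial\boz$, together with a direct estimate, at an arbitrary point $x$ of that wall and every small $r$, of the capacity of the \emph{union} of all tubes meeting $B(x,r)$: this is done by an explicit test function (the cut-off multiplied by the extension of a characteristic function, or the logarithmic cut-offs of Remark \ref{rajatapaus} for $q=n-1$) and a summation over the generations $k\ge k_r$, with the radii $r_k$ chosen so small that the sum is $\lesssim rh(r)$ and the Wiener-type criterion for $q$-thinness (with the separate definition at $q=1$) is verified at every such $x$. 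Your sketch leaves exactly this aggregate capacity estimate at accumulation points, the endpoint $q=n-1$, and the verification that $|\partial\boz|>0$ while $\boz$ stays connected, unaddressed; note also that for this part of Theorem \ref{thm:q-fat} no extension property is needed, so you could simply have invoked the paper's Theorem \ref{thm:caF} and Remark \ref{rajatapaus} had they been available to you.
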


 By differentiating our additive set function $\Phi$ with respect to the Lebesgue measure, Theorem \ref{thm:point} together with  Theorem \ref{thm:q-fat} and  the Lebesgue density theorem yield the following conclusion on the volume of the boundary of a Sobolev $(p,q)$-extension domain.
 
\begin{thm}\label{thm:high}
Let $\boz\subset\rn$ be a Sobolev $(p, q)$-extension domain with $1\leq q< p<\fz$. Under the assumption of Theorem \ref{thm:point}, we have $|\partial\boz|=0$. In particular, if $\boz\subset\rr^2$ is a Sobolev $(p, q)$-extension domain with $1\leq q<p<\fz$, then $|\partial\boz|=0$. Moreover, if $n\geq 3$ and $\boz\subset\rn$ is a Sobolev $(p, q)$-extension domain with $n-1<q<p<\fz$, then $|\partial\boz|=0$.
\end{thm}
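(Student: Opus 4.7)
The plan is a short differentiation-type argument. First I would suppose for contradiction that $|\partial\boz|>0$. By the Lebesgue density theorem, almost every $x\in\partial\boz$ is a point of Lebesgue density $1$ for $\partial\boz$, and since $\boz$ is open we have $\boz\cap\partial\boz=\emptyset$; consequently $|B(x,r)\cap\boz|\le|B(x,r)|-|B(x,r)\cap\partial\boz|$, and
$$\lim_{r\to0^+}\frac{|B(x,r)\cap\boz|}{|B(x,r)|}=0\qquad\text{for a.e. }x\in\partial\boz.$$

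Next I would invoke Theorem \ref{thm:point}: at a.e. $x\in\partial\boz$ there is $r_x>0$ such that for every $0<r<r_x$,
$$\Phi(B(x,r))^{p-q}\,|B(x,r)\cap\boz|^q\geq|B(x,r)|^p,$$
which rearranges to
$$\frac{\Phi(B(x,r))}{|B(x,r)|}\geq\lf(\frac{|B(x,r)|}{|B(x,r)\cap\boz|}\r)^{q/(p-q)}.$$
Combined with the previous step, this forces
$$\limsup_{r\to0^+}\frac{\Phi(B(x,r))}{|B(x,r)|}=\fz$$
on a subset of $\partial\boz$ of positive Lebesgue measure. However, since $\Phi$ is nonnegative, bounded, monotone and countably additive on open sets, it gives rise to a finite Borel (Radon) measure on $\rn$, and the classical differentiation theorem for such measures ensures that the upper derivative $\limsup_{r\to0^+}\Phi(B(x,r))/|B(x,r)|$ is finite at Lebesgue-a.e. $x\in\rn$. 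This contradiction yields $|\partial\boz|=0$ under the hypothesis of Theorem \ref{thm:point}.

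For the two explicit regimes ($\boz\subset\rr^2$ with $1\le q<p<\fz$, and $\boz\subset\rn$ with $n\geq3$ and $n-1<q<p<\fz$), it remains to check the $q$-fatness hypothesis of Theorem \ref{thm:point}. In both regimes Theorem \ref{thm:q-fat} supplies $q$-capacitory density at \emph{every} $x\in\partial\boz$, and Proposition \ref{le:stro} (with its planar analogue when $n=2$) upgrades this to $q$-fatness at every boundary point. Hence the generic part of the theorem applies and gives $|\partial\boz|=0$.

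In terms of difficulty, this final proof is essentially bookkeeping once Theorems \ref{thm:point} and \ref{thm:q-fat} are in hand; the only subtle point is that one crucially needs the \emph{countable additivity} of $\Phi$, without which the blow-up produced in the second paragraph cannot be turned into a contradiction through standard differentiation of measures. The real obstacles, already resolved earlier in the paper, lie in constructing the set function $\Phi$ with the capacity-density lower bound \eqref{equa:meade} and in verifying $q$-capacitory density at every boundary point in the stated regimes.
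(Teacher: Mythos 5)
Your proposal is correct and follows essentially the same route as the paper: rearrange the inequality of Theorem \ref{thm:point} at Lebesgue density points of $\partial\boz$ and contradict the a.e.\ finiteness of the upper volume derivative of $\Phi$ (the paper's Lemma \ref{lem:upfinite}, which is exactly your ``classical differentiation theorem'' step), while the special regimes follow from Theorem \ref{thm:q-fat} and Proposition \ref{le:stro} just as you indicate. The only cosmetic difference is that the paper phrases the contradiction as the boundary density being bounded away from $1$ at a Lebesgue point, whereas you phrase it as $\limsup_{r\to0^+}\Phi(B(x,r))/|B(x,r)|=\fz$ on a set of positive measure; these are equivalent.
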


We have not required our Sobolev extension operators to have any local properties. Let us consider such a requirement.
Let $\boz\subset\rn$ be a bounded Sobolev $(p, q)$-extension domain with $1\leq q<p<\fz$. Then a bounded extension operator $E_s:W^{1, p}(\boz)\to W^{1,q}(\rn)$ is said to be a strong extension operator, if for every function $u\in W^{1,p}(\boz)\cap C(\boz)$ with $u\big|_{B(x, r)\cap\boz}\equiv c$ for some ball $B(x, r)$ intersecting $\boz$ and some constant $c\in\rr$, we have $E_s(u)(y)=c$ for almost every $y\in B(x, r)\cap\partial\boz$. 

The following theorem shows that an extension operator can be promoted to a strong one precisely when the boundary of our extension domain is of volume zero. 

\begin{thm}\label{thm:strong}
Let $\boz\subset\rn$ be a bounded Sobolev $(p, q)$-extension domain with $1\leq q\leq p<\fz$. Then there exists a strong extension operator $E_s:W^{1,p}(\boz)\to W^{1,q}(\rn)$ if and only if $|\partial\boz|=0$.
\end{thm}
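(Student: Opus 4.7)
The plan is to treat the two implications separately, with the ``if'' direction essentially trivial and the converse carrying the content.

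For the ``if'' direction, assume $|\partial\boz|=0$. The hypothesis that $\boz$ is a Sobolev $(p,q)$-extension domain supplies some bounded extension operator $E\colon W^{1,p}(\boz)\to W^{1,q}(\rn)$. For any admissible test $u\in W^{1,p}(\boz)\cap C(\boz)$ identically equal to $c$ on $B(x,r)\cap\boz$, the set $B(x,r)\cap\partial\boz$ has Lebesgue measure zero, so the defining condition ``$E(u)(y)=c$ for almost every $y\in B(x,r)\cap\partial\boz$'' is vacuously satisfied. Hence $E_s:=E$ is already a strong extension.

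For the ``only if'' direction, suppose a strong $E_s$ exists and, for contradiction, that $|\partial\boz|>0$. By the Lebesgue density theorem, almost every $y_0\in\partial\boz$ is a density point of $\partial\boz$, so $|B(y_0,r)\cap\partial\boz|\sim r^n$ and $|B(y_0,r)\cap\boz|=o(r^n)$ as $r\to 0^+$. Fix such $y_0$ and, for small $r>0$, test against the radial bump $u_r(x)=\eta(|x-y_0|/r)$ with $\eta$ smooth, $\eta\equiv 1$ on $[0,1/2]$ and $\eta\equiv 0$ on $[1,\infty)$. Then $u_r|_\boz\in W^{1,p}(\boz)\cap C(\boz)$ is identically $1$ on $B(y_0,r/2)\cap\boz$ and identically $0$ on $\boz\setminus\overline{B(y_0,r)}$, and the density of $y_0$ yields
\[
\|u_r\|_{W^{1,p}(\boz)}^p \leq C(1+r^{-p})\,|B(y_0,r)\cap\boz| = o(r^{n-p}).
\]
Strongness forces $E_s(u_r)=1$ a.e.\ on $B(y_0,r/2)\cap\partial\boz$ and $E_s(u_r)=0$ a.e.\ on $\partial\boz\setminus\overline{B(y_0,r)}$, and both of these are subsets of $B(y_0,2r)$ of Lebesgue measure $\gtrsim r^n$. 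A rescaled relative $q$-capacity (equivalently, Poincar\'e--Sobolev) estimate on $B(y_0,2r)$ then gives $\|\nabla E_s(u_r)\|_{L^q(\rn)}^q\gtrsim r^{n-q}$, while the operator bound yields $\|\nabla E_s(u_r)\|_{L^q(\rn)}^q\leq C\|u_r\|_{W^{1,p}(\boz)}^q=o(r^{(n-p)q/p})$. Comparing the two gives
\[
r^{n(p-q)/p}\lesssim o(1).
\]

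In the diagonal case $q=p$ the exponent vanishes and this is already the contradiction $1\lesssim o(1)$. The main technical obstacle is the case $q<p$: a single bump only forces the conditional density bound $|B(y_0,r)\cap\boz|\gtrsim r^{np/q}$, which is still compatible with $y_0$ being a density point of $\partial\boz$. To close this case, the plan is to run the above bump construction simultaneously on a Vitali-type disjoint family $\{u_{r_k,y_k}\}$ centered at density points covering a positive-measure portion of $\partial\boz$, to sum the local contributions through the linearity of $E_s$ together with the additivity of the bounded set function $\Phi$ furnished by Theorem~\ref{thm:capa}, and to promote the combined local estimate by Lebesgue differentiation in the spirit of Corollary~\ref{thm:poicade} and Theorem~\ref{thm:point}. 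This produces an Ahlfors-type measure density at almost every $y_0\in\partial\boz$ incompatible with $|\partial\boz|>0$, yielding the desired contradiction and hence $|\partial\boz|=0$.
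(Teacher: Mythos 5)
Your ``if'' direction is correct and coincides with the paper's: when $|\partial\boz|=0$ the defining condition of a strong operator concerns a null set, so any bounded extension operator is already strong. Your ``only if'' argument is also complete in the diagonal case $q=p$, where the single-bump computation reproduces the classical measure-density contradiction. The genuine gap is the case $q<p$, which is the real content of the theorem: there your text is only a plan, and the plan as stated does not work. First, you propose to ``sum the local contributions through the linearity of $E_s$'', but $E_s$ is not assumed linear; the paper emphasizes that its operators may be nonlinear and can only be promoted to homogeneous ones (Lemma \ref{lem:homo}), so any argument that applies $E_s$ to each bump separately and adds the outputs is unavailable. Second, the mechanism that actually overcomes the exponent mismatch $r^{n(p-q)/p}\lesssim o(1)$ which you correctly identified is never executed. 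The paper's route is to replace the global operator norm $\|E_s\|$ in your estimate by the local quantity $\Phi(B(x,r))^{1/\kappa}$, $1/\kappa=1/q-1/p$, via Corollary \ref{cor:sofun1}; with the test function of \eqref{equa:testu'2} and strongness of $E_s$ playing the role that $q$-fatness plays in the proof of Theorem \ref{thm:point}, this upgrades the single-bump computation to $\Phi(B(x,r))^{p-q}|B(x,r)\cap\boz|^q\geq C|B(x,r)|^p$ at almost every point, and then Lemma \ref{lem:upfinite} ($\overline D\Phi(x)<\fz$ a.e.\ for a bounded, monotone, countably additive $\Phi$) contradicts the Lebesgue density of $\partial\boz$ exactly as in the proof of Theorem \ref{thm:high}. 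No covering family and no summation over balls is needed once this localization is in place.

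Conversely, without that localization your Vitali scheme does not close as written. If you apply $E_s$ to the single function $v=\sum_k u_{r_k,y_k}$ (the only option without linearity), the constancy $v\equiv 0$ on the annulus around a given ball is destroyed by the neighboring bumps, so strongness no longer forces $E_s(v)=0$ there unless the selected balls are well separated; and even then the resulting inequality compares $\sum_k r_k^{n-q}$ with $\lf(\sum_k \epsilon_k r_k^{n-p}\r)^{q/p}$, which H\"older estimates in the wrong direction when the radii are of different sizes, so one must first make the radii comparable (an Egorov-type uniformization of the density of $\partial\boz$ on a positive-measure subset) before the disjointness pays off. None of this is in your proposal: the main case is reduced to an outline whose named tools (linearity of $E_s$, ``additivity of $\Phi$'' without the key step of Corollary \ref{cor:sofun1} and Lemma \ref{lem:upfinite}) either fail or are missing, so the proof is incomplete precisely where the theorem is nontrivial.
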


Recall that a function $u:\Omega\to\mathbb R$ is said to be $ACL(\boz)$, if it is absolutely continuous on almost all line segments parallel to coordinate axes. According to the Tonelli characterization of Sobolev functions, a Sobolev function can be redefined on a set of measure zero so as to belong to $ACL(\boz)$ \cite{M}. Let $\boz\subset\rn$ be a Sobolev $(p, q)$-extension domain and $E: W^{1,p}(\boz)\to W^{1,q}(\rn)$ be the corresponding bounded extension operator. Suppose that $u\in W^{1,p}(\boz)$ satisfies $u\equiv 1$ on $U\cap\boz$ for some open set $U\subset\rn$ such that $U\cap\partial\boz\neq\emptyset$. By the Tonelli characterization, it is natural to expect that $E(u)(x)=1$ should hold for almost every $x\in U\cap\overline\boz$. Hence, $E$ should be a strong extension operator. However, our next theorem shows that this is not always the case.  

\begin{thm}\label{thm:positive}
Let $n\geq3$ and $1\leq q<n-1$. Then there exists $p>q$ and a Sobolev $(p, q)$-extension domain $\boz\subset\rn$ with $|\partial\boz|>0$.
\end{thm}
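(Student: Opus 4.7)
The plan is to produce $\Omega$ by excavating from a bounded reference domain $B$ a countable disjoint family $\{P_i\}$ of small open cuspidal pits chosen so that (i) each $P_i$ is a scaled rigid copy of a single reference region whose boundary near its cusp tip is modeled on the boundary of an outward cusp $\Omega^n_w$ for an auxiliary doubling cusp function $w$, and (ii) the residual compact set $K := \overline{B} \setminus \bigl(\Omega \cup \bigcup_i P_i\bigr)$ has positive $n$-dimensional Lebesgue measure, where $\Omega := B \setminus \overline{\bigcup_i P_i}$. The set $K$ is arranged to be a fat Cantor-type set, formed as the decreasing intersection of closures of generations of nested pit families, each generation packed tightly enough that enough volume survives to force $|K| > 0$. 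Since $K \subset \partial\Omega$, one obtains $|\partial\Omega| > 0$, and connectedness of $\Omega$ is ensured by keeping pits at different generations well separated.

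Fix $q \in [1, n-1)$. Because $q < n-1$, Theorem \ref{lem:cacusp} shows that the $q$-capacity of a cuspidal neighborhood of radius $r$ is comparable to $r\,w(r)^{n-1-q}$, which can be made much smaller than $r^{n-q}$ by choosing $w$ sufficiently flat at the origin. Combined with the Maz'ya-type extension theory for outward cuspidal domains referenced immediately after Theorem \ref{lem:cacusp}, this allows the selection of a cusp profile $w$ and an exponent $p > q$ for which each individual cusp $\Omega^n_w$ admits a bounded linear extension $W^{1,p}(\Omega^n_w) \to W^{1,q}(\mathbb{R}^n)$. Rescaling transfers this local extension to each pit $P_i$ with operator norm quantitatively controlled by its diameter $r_i$.

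The global extension $E\colon W^{1,p}(\Omega) \to W^{1,q}(\mathbb{R}^n)$ is then assembled pit by pit: set $Eu = u$ on $\Omega$, on each $P_i$ use the rescaled Maz'ya extension applied to $u$ restricted to a neighborhood $V_i \supset P_i$ in $\Omega$, and on the residual set $K$ simply set $Eu = 0$ (this contributes nothing to the $W^{1,q}$ norm since $K$ is of zero $q$-capacity from the interior). Because the $P_i$ are pairwise disjoint and well separated, no partition of unity is needed, and boundedness reduces to a summable family of local estimates of the form $\|Eu\|_{W^{1,q}(P_i)}^q \leq C_i \|u\|_{W^{1,p}(V_i)}^p$ with the $V_i$ of uniformly bounded overlap. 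The main obstacle is to reconcile two competing demands: the pits must be numerous enough (and clustered enough) that the residual set $K$ has positive $n$-measure, yet the profile $w$ and radii $r_i$ must be sharp enough that $\sum_i C_i$ assembles into a bound compatible with $\|u\|_{W^{1,p}(\Omega)}^p$. The regime $q < n-1$ is exactly where this is possible, as the cuspidal $q$-capacity can be driven arbitrarily small relative to Euclidean scales; a generation-indexed Cantor-type construction with geometrically decaying pit radii $r_k$ and a sufficiently sharp $w$ realizes both requirements simultaneously, completing the proof.
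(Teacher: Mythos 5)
Your overall strategy (accumulate countably many small features on a fat Cantor-type set so that a positive-volume set ends up in $\partial\boz$, and exploit $q<n-1$ to make the countable family of local extension costs summable) is in the right spirit, but the proposal has two genuine gaps, and they sit exactly where the paper's proof does its real work. First, the treatment of the residual set $K$ fails. You set $Eu=0$ on $K$ and justify this by saying $K$ has ``zero $q$-capacity from the interior''; but $|K|>0$ forces $K$ to have positive $q$-capacity, and in any case the values of a $W^{1,q}(\rn)$ function on a set of positive volume are part of the function: they enter the $L^q$ norm and must mesh, in the sense of weak differentiability, with the values of $u$ on $\boz$ and with the extended values inside the pits across the finely interlaced interface. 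No mechanism in your construction guarantees this, and prescribing $0$ next to arbitrary values of $u$ will in general destroy membership in $W^{1,q}(\rn)$. The paper's construction is engineered precisely to solve this: the positive-volume boundary piece is $E^{n-1}\times[1,2]$, placed directly above the solid cube $\mathcal Q_o$ and approached by thin solid chimneys $\widetilde{\mathsf C}_k^j$ belonging to the domain, so that on the region above the cube the extension can be taken to be the reflection $u\circ\mathcal R_1$ across $\{x_n=1\}$, interpolated with the chimney values via the explicit cut-offs of Lemma \ref{lem:cut}, whose gradients blow up only like the reciprocal of the distance to the chimney rims. Relatedly, your description of $K$ as a decreasing intersection of closures of nested generations of pits does not even guarantee $K\subset\partial\boz$: points of $\partial\boz$ must be limits of points of $\boz$, and if the pits fill out neighborhoods of $K$ (as in a fat-Cantor removal scheme) then near $K$ there are only pits, not $\boz$. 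In the paper it is the domain itself (the chimneys, based at Whitney cubes of $(0,1)^{n-1}\setminus E^{n-1}$) that accumulates on the fat Cantor set.

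Second, the extension mechanism you invoke points in the wrong direction and the bookkeeping does not close. The Maz'ya--Poborchi results quoted after Theorem \ref{lem:cacusp} concern extending functions defined \emph{on} the outward cusp domain $\boz^n_w$ to $\rn$; in your geometry the function lives on the complement of the pits and must be extended \emph{into} each excavated pit, a different problem which those theorems do not supply. Moreover, local bounds of the stated form $\|Eu\|^q_{W^{1,q}(P_i)}\le C_i\|u\|^p_{W^{1,p}(V_i)}$ cannot simply be summed over $i$ to give $\|Eu\|_{W^{1,q}}\le C\|u\|_{W^{1,p}}$, since the homogeneities do not match; one needs the H\"older-type splitting used in \eqref{eq:Eein3}--\eqref{eq:Eein8}, where boundedness reduces to the finiteness of sums such as $\sum_k N_k r_k^{\,n-1-\frac{pq}{p-q}}$ --- this is where $q<n-1$, the restriction $p>(n-1)q/(n-1-q)$ and the choice $r_k\le 2^{-\lambda(n-1)(k+1)}$ with $\lambda>\lambda_o(p,q)$ actually enter. (Note also that the inhomogeneous norms are not scale invariant, so ``rescaling the local extension with norm controlled by $r_i$'' needs a quantitative statement.) Finally, since the extension is built by composing $u$ with reflections, the paper first works on the dense class $W^{1,\fz}(\widetilde\boz_h)\cap C(\widetilde\boz_h)$ (available via quasiconformal equivalence to the ball and Gromov hyperbolicity, Lemmas \ref{lem:7qsb}, \ref{lem:qbgh} and \ref{lem:density}) and then passes to the limit through the truncated domains $\widetilde\boz_h^m$; your proposal leaves the definition of $Eu$ for a general $u\in W^{1,p}(\boz)$ unaddressed. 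These missing ingredients are the substance of Theorem \ref{thm:BET}, of which Theorem \ref{thm:positive} is an immediate consequence.
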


Corollary \ref{thm:poicade} relies on the assumption that, for almost every $x\in\partial\boz$, we have the capacity estimate
$$
Cap_q\lf(\boz\cap B\lf(x, \frac{r_i}{4}\r), \boz\cap A\lf(x; \frac{r_i}{2}, \frac{3r_i}{4}\r); B(x, r_i)\r)\geq C(x)r_i^{n-q},
$$
for a decreasing sequence $\{r_i\}_{i=1}^\fz$ which tends to zero. For $n-1<q<\fz$, this is always the case for an arbitrary domain $\boz\subset\rn$. 
On the other hand,
this estimate may fail miserably when $1\le q<n-1.$ 

\begin{thm}\label{thm:caF}
Let $n\geq 3$ and $1\leq q<n-1$ be arbitrary, and let $h:[0, 1]\to[0, 1]$ be a strictly increasing and continuous function with $h(0)
=0$ and $h(1)=1$. Then there exists $q<p<\fz$ and a Sobolev $(p, q)$-extension domain $\boz_h\subset\rn$ with a subset $A\subset\partial\boz_h$ of positive volume so that
$$
\lim_{r\to0^+}\frac{Cap_q\lf(\boz_h\cap B\lf(x, \frac{r}{4}\r), \boz_h\cap A\lf(x; \frac{r}{2}, \frac{3r}{4}\r); B(x, r)\r)}{h(r)}=0
$$
for every $x\in A$.
\end{thm}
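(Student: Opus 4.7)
My plan is to build upon the construction underlying Theorem \ref{thm:positive}, tuning the sharpness of the exterior spires at the positive-volume part of the boundary so that their $q$-capacities shrink faster than the prescribed $h$.

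First, I would apply Theorem \ref{lem:cacusp} in the regime $1\le q<n-1$ (so $n-1-q>0$), which yields
$$Cap_q\lf(\boz^n_w\cap B\lf(0,\tfrac{r}{4}\r), \boz^n_w\cap A\lf(0;\tfrac{r}{2}, \tfrac{3r}{4}\r); B(0,r)\r)\le c\, r\, w(r)^{n-1-q}$$
for the outward cusp domain with cusp function $w$. Because the exponent $n-1-q$ is strictly positive, making $w(r)$ small relative to $r$ forces this capacity to be arbitrarily small compared to any prescribed positive value, and in particular compared to $h(r)$.

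Second, I would construct $\boz_h$ as a hierarchical Cantor-type refinement of the extension domain from Theorem \ref{thm:positive}. At generation $k$, remove disjoint translated and scaled copies of an outward cusp shape $\boz^n_{w_k}$ with cusp function $w_k$ chosen so that
$$r\, w_k(r)^{n-1-q}\le \frac{h(r)}{k}\qquad\text{for every }r\in[2^{-k-1},2^{-k}].$$
This is possible since the right-hand side is positive and $n-1-q>0$. The hierarchical scheme of Theorem \ref{thm:positive} ensures that the tips of these cusps accumulate on a set $A\subset\partial\boz_h$ of positive volume. For $x\in A$ and $r\in[2^{-k-1},2^{-k}]$ small, the portion of $\boz_h$ near $x$ at this scale is contained in a translate of $\boz^n_{w_k}$; monotonicity of $q$-capacity under shrinking the plates of the condenser, combined with the bound from the first step, gives
$$\frac{Cap_q\lf(\boz_h\cap B(x,\tfrac r4), \boz_h\cap A(x;\tfrac r2,\tfrac{3r}{4}); B(x,r)\r)}{h(r)}\le\frac{C}{k}\longrightarrow 0$$
as $r\to 0^+$, which is the required limit.

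Third, I would verify that $\boz_h$ is a Sobolev $(p,q)$-extension domain for a suitable $p>q$. The Maz'ya-type results from \cite{Mazya1,Mazya2,Mazya,Mazya3} supply quantitative $(p,q)$-extension criteria for cusp domains in terms of $p$, $q$, $n$, and the cusp sharpness. I would first fix $p$ large enough to determine a Maz'ya admissibility class accommodating the sharpnesses $\{w_k\}$, and then assemble a global extension operator from local extensions across each cusp via the Whitney-type decomposition scheme underlying Theorem \ref{thm:positive}. The main obstacle is reconciling the two competing demands on $\{w_k\}$: sharp enough for the capacity to vanish faster than $h$, yet tame enough to remain in a fixed admissibility class. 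The slack factor $1/k$ in the capacity inequality provides just enough room: after fixing $p$ large, one selects each $w_k$ to be the sharper of the two upper bounds coming from the capacity decay and the admissibility constraint. Making this rigorous rests on Maz'ya's quantitative cusp extension theory, which is the technical crux.
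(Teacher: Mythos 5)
Your plan has a genuine gap at its core: for true outward cusps the two constraints you need to reconcile pull in opposite directions, and for rapidly decaying $h$ they cannot both be met with a fixed finite $p$. The capacity requirement $r\,w_k(r)^{n-1-q}\le h(r)/k$ is an \emph{upper} bound on $w_k$ (the cusp must be sharp enough), while the Maz'ya--Poborchi admissibility \eqref{equa:pq0} is in effect a \emph{lower} bound on $w_k$ for fixed $p$: a power cusp $t^{s}$ is a $(p,q)$-extension domain only when roughly $q\le np/(1+(n-1)s)$, so for fixed $q\ge 1$ and fixed $p<\fz$ the admissible sharpness $s$ is bounded. If $h$ decays super-polynomially (which the hypotheses allow), beating $h$ forces $s_k\to\fz$, hence no single $p$ works; choosing ``the sharper of the two upper bounds'' satisfies the capacity condition but violates admissibility, and the slack factor $1/k$ is irrelevant because the required sharpness is dictated by $h$, not by that factor. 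Note also that Theorem \ref{lem:cacusp}, which you invoke for the upper capacity bound, assumes $w$ doubling with $w'$ increasing, so it does not even apply to the super-polynomially sharp profiles you would need. A second, independent gap is the local estimate at points of $A$: your $A$ must consist of accumulation points of cusp tips (the tips themselves form a null set), and at such a point $x$ the ball $B(x,r)$ typically meets \emph{many} generation-$\ge k$ pieces of the domain, so the tip-centered estimate of Theorem \ref{lem:cacusp} plus ``containment in a single translate of $\boz^n_{w_k}$'' is not justified; one has to sum the contributions of all pieces meeting $B(x,r)$.

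The paper avoids both problems by using a different geometry: thin vertical cylinders of radius $r_k$ erected over the Whitney cubes of the complement of a fat Cantor set $E^{n-1}$, with $A=E^{n-1}\times(\tfrac32,2)$. For that geometry thinness helps on \emph{both} sides simultaneously: the extension (Theorem \ref{thm:BET}) is built by explicit cut-offs and reflections, and its key H\"older terms \eqref{eq:Eein7}--\eqref{eq:Eein8} improve as $r_k\to0$ because $pq/(p-q)<n-1$ once $p>(n-1)q/(n-1-q)$; and the capacity is bounded from above by testing with $F_h\cdot E(\chi_{\Omega})$ and summing over all cylinders meeting $B(x,r)$, which by the choice \eqref{rkoot} of $r_k$ in terms of $h(8^{-k})$ gives $\le Crh(r)=o(h(r))$. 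This co-monotonicity (both constraints are upper bounds on $r_k$, so one takes $\tilde h=\min\{h,h_\lambda\}$) is exactly what your cusp-based scheme lacks, and without a replacement for it the argument does not go through for arbitrary $h$.
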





This paper is organized as follows. Section 2 contains definitions and preliminary results. We reduce Theorem \ref{thm:q-fat} to our other results in Section 3. Section 4 contains the proofs of Corollary \ref{thm:capa} and Theorems \ref{thm:poicade}, \ref{thm:point},  \ref{thm:high} and \ref{thm:strong}.
We prove Theorem \ref{lem:cacusp} in Section 5 and Proposition \ref{le:stro} in Section 6.
Section 7 is devoted to the construction behind Theorems  \ref{thm:positive} and \ref{thm:caF}.
In the final section, Section 8, we pose open problems that arise from the results in this paper
and discuss the locality of our estimates.

\section{Preliminaries}

\subsection{Definitions and notation}
For a function $u\in L^1_{\rm loc}(\boz)$ and a measurable set $A\subset\boz$ with $|A|>0$, 
\[u_A:=\bint_Au(x)dx:=\frac{1}{|A|}\int_Au(x)dx\] 
means the integral average of $u$ over the set $A$.

Let $\Omega$ be a domain in the $n$-dimensional Euclidean space $\rn$ with $n\geq 2$. By the symbol $\operatorname{Lip}(\boz)$ we denote the class of all Lipschitz continuous functions defined on $\boz$. The Sobolev space $W^{1,p}(\Omega)$, $1\leq p\leq\infty$, (see, for example, \cite{M}) is defined 
as a Banach space of locally integrable and weakly differentiable functions
$u:\Omega\to\mathbb{R}$ equipped with the norm: 
\[
\|u\|_{W^{1,p}(\Omega)}=\| u\|_{L^p(\Omega)}+\|\nabla u\|_{L^p(\Omega)},
\]
where $\nabla u=\left(\frac{\partial u}{\partial x_1},...,\frac{\partial u}{\partial x_n}\right)$ is a weak gradient of $u$.

Let us give the definition of Sobolev extension domains.

\begin{defn}\label{defnsoex}
Let $1\leq q\leq p<\fz$. A bounded domain $\boz\subset\rn$ is said to be a Sobolev $(p,q)$-extension domain, if there exists a bounded operator 
\begin{equation}
E: W^{1,p}(\boz)\to W^{1,q}(\rn)\nonumber
\end{equation}
such that for every function $u\in W^{1,p}(\boz)$, the function $E(u)\in W^{1,q}(\rn)$ satisfies $E(u)\big|_{\boz}\equiv u$ and 
$$
\|E\|:=\sup_{u\in W^{1,p}(\boz)\setminus\{0\}}\frac{\|E(u)\|_{W^{1,q}(\rn)}}{\|u\|_{W^{1,p}(\boz)}}<\infty.
$$
\end{defn}

 Sobolev $(p,q)$-extension operators arise as extension operators in non-Lipschitz domains, see \cite{GS, Mazya1, Mazya2, Mazya, Mazya3}. The outward cusp domains $\boz^n_{t^s}$ from our introduction are standard examples of Sobolev $(p, q)$-extension domains with $q$ strictly less than $p$. 
The optimal Sobolev extension pairs $(p, q)$ for these domains are known due to Maz'ya and Poborchi, \cite{Mazya1, Mazya2, Mazya, Mazya3}. 


In general, we work with potentially non-linear extension operators, but we distinguish homogeneous Sobolev extension operators.

\begin{defn}\label{defn:homo}
Let $E: W^{1,p}(\boz)\to W^{1,q}(\rn)$ be a bounded Sobolev $(p, q)$-extension operator with $1\leq q\leq p<\fz$. We say that it is a homogeneous extension operator if for every $u\in W^{1,p}(\boz)$ and $\lambda\in\rr$, $E(\lambda u)(x)=\lambda E(u)(x)$ holds, for every $x\in\rn$.
\end{defn}

Linear Sobolev extension operators form a subclass of homogeneous Sobolev extension operators. We prove that, for every Sobolev $(p, q)$-extension domain, there always exists a homogeneous Sobolev extension operator. When $q=p$ one in fact can find a linear extension operator \cite{HKT}
but it is not known if this could be the case when $q<p.$

\begin{lem}\label{lem:homo}
Let $\boz\subset\rn$ be a Sobolev $(p, q)$-extension domain. Then every bounded Sobolev extension operator $E:W^{1,p}(\boz)\to W^{1,q}(\rn)$ promotes to a bounded homogeneous Sobolev extension operator $E_h:W^{1,p}(\boz)\to W^{1,q}(\rn)$ with the operator norm inequality $\|E_h\|\leq \|E\|$.
\end{lem}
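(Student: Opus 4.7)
The plan is to construct $E_h$ by performing two corrections to $E$: first an odd symmetrization, which produces an extension operator $\tilde E$ with $\tilde E(-u)=-\tilde E(u)$, and then a radial normalization, which enforces the scaling $E_h(\lambda u)=\lambda E_h(u)$ for $\lambda>0$. Combining the two gives full $\rr$-homogeneity.

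More precisely, first I would set
\[
\tilde E(u)\defeq \tfrac12\bigl(E(u)-E(-u)\bigr),\qquad u\in W^{1,p}(\boz).
\]
Since $E(u)|_\boz=u$ and $E(-u)|_\boz=-u$, one has $\tilde E(u)|_\boz=u$, so $\tilde E$ is still a Sobolev extension operator. The triangle inequality in $W^{1,q}(\rn)$ gives $\|\tilde E(u)\|_{W^{1,q}(\rn)}\le\tfrac12(\|E(u)\|_{W^{1,q}(\rn)}+\|E(-u)\|_{W^{1,q}(\rn)})\le\|E\|\,\|u\|_{W^{1,p}(\boz)}$, so $\|\tilde E\|\le\|E\|$. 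Moreover, by construction $\tilde E(-u)=-\tilde E(u)$.

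Next, for $u\in W^{1,p}(\boz)$ with $u\ne 0$ I would define
\[
E_h(u)(x)\defeq \|u\|_{W^{1,p}(\boz)}\cdot\tilde E\!\left(\frac{u}{\|u\|_{W^{1,p}(\boz)}}\right)(x),
\]
and $E_h(0)\equiv 0$. The extension property $E_h(u)|_\boz=u$ is immediate. For homogeneity, if $\lambda>0$ then $\|\lambda u\|_{W^{1,p}(\boz)}=\lambda\|u\|_{W^{1,p}(\boz)}$ and the normalized arguments coincide, giving $E_h(\lambda u)=\lambda E_h(u)$. If $\lambda<0$, writing $\lambda=-|\lambda|$ and invoking the oddness of $\tilde E$ yields $E_h(\lambda u)=|\lambda|\|u\|\,\tilde E(-u/\|u\|)=-|\lambda|E_h(u)=\lambda E_h(u)$. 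The case $\lambda=0$ is trivial. For the norm bound, whenever $u\ne 0$ we have
\[
\|E_h(u)\|_{W^{1,q}(\rn)}=\|u\|_{W^{1,p}(\boz)}\,\|\tilde E(u/\|u\|_{W^{1,p}(\boz)})\|_{W^{1,q}(\rn)}\le\|\tilde E\|\,\|u\|_{W^{1,p}(\boz)}\le\|E\|\,\|u\|_{W^{1,p}(\boz)}.
\]

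There is no substantive obstacle here; the only subtle point worth flagging is that neither step alone suffices, since $\tilde E$ is merely odd (not homogeneous), and a plain radial normalization $u\mapsto\|u\|E(u/\|u\|)$ only yields positive homogeneity unless one has already symmetrized. The two steps combine cleanly because normalization by $\|u\|_{W^{1,p}(\boz)}$ makes the unit-norm evaluation of $\tilde E$ the only place where nonlinearity survives, and oddness of $\tilde E$ at that unit-norm input handles the sign of $\lambda$.
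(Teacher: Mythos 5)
Your proof is correct, and it is a strengthening of, rather than a copy of, the paper's argument. The paper's own proof consists only of your second step: it sets $E_h(u):=\|u\|_{W^{1,p}(\boz)}E\bigl(u/\|u\|_{W^{1,p}(\boz)}\bigr)$ (and $E_h(0)=0$) and then simply asserts $E_h(\lambda u)=\lambda E_h(u)$ for all $\lambda\in\rr$, with the same operator-norm computation you give. As you point out, for a genuinely nonlinear $E$ this plain radial normalization only yields positive homogeneity: for $\lambda<0$ one gets $E_h(\lambda u)=|\lambda|\,\|u\|\,E(-u/\|u\|)$, which equals $\lambda E_h(u)$ only if $E$ is odd on unit-norm inputs, something the paper does not arrange. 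Your preliminary odd symmetrization $\tilde E(u)=\tfrac12\bigl(E(u)-E(-u)\bigr)$, which preserves both the extension property and the bound $\|\tilde E\|\le\|E\|$, supplies exactly this missing ingredient, so your two-step construction proves the lemma as literally stated in Definition \ref{defn:homo} (homogeneity for every real $\lambda$), whereas the paper's one-step argument, read strictly, justifies only $\lambda\ge 0$. It is worth noting that in the paper's application of the lemma (the additivity of the set function $\Phi$ in Theorem \ref{thm:sofun1}), homogeneity is used only to rescale test functions by positive constants, so positive homogeneity would suffice there; but as a proof of the stated lemma your version is the more complete one, at no extra cost in the norm bound.
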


\begin{proof}
Let $u\in W^{1,p}(\boz)$ be arbitrary with $u\neq 0$. Then we define 
$$E_h(u):=\|u\|_{W^{1,p}(\boz)}E\lf(\frac{u}{\|u\|_{W^{1,p}(\boz)}}\r).$$
For $u=0$, we simply set $E_h(u)=E(u)=0$. Then, for any function $u\in W^{1,p}(\boz)$ and $\lambda\in\rr$, we have $E_h(\lambda u)=\lambda E_h(u)$. Moreover,
\begin{multline}
\|E_h\|:=\sup_{u\in W^{1,p}(\boz)\setminus\{0\}}\lf\|E\lf(\frac{u}{\|u\|_{W^{1,p}(\boz)}}\r)\r\|_{W^{1,q}(\rn)}
                  =\sup_{\|u\|_{W^{1,p}(\boz)}=1}\|E(u)\|_{W^{1,q}(\rn)}\nonumber\\
                  \leq\sup_{u\in W^{1,p}(\boz)\setminus\{0\}}\frac{\|E(u)\|_{W^{1,q}(\rn)}}{\|u\|_{W^{1,p}(\boz)}}=:\|E\|.\nonumber
\end{multline}
\end{proof}

By Lemma \ref{lem:homo}, from now on, we may always assume that $E:W^{1,p}(\boz)\to W^{1,q}(\rn)$ is a homogeneous bounded Sobolev extension operator.     

We continue with the definition of a strong bounded Sobolev extension operator.

\begin{defn}\label{defn:strong}
Let $\boz\subset\rn$ be a Sobolev $(p, q)$-extension domain with $1\leq q\leq p<\fz$. A bounded Sobolev extension operator $E_s:W^{1,p}(\boz)\to W^{1,q}(\rn)$ is said to be a strong bounded Sobolev extension operator if, for every function $u\in W^{1,p}(\boz)$ with $u\big|_{B(x, r)\cap\boz}\equiv c$ for some ball $B(x, r)$ with $B(x, r)\cap\boz\neq\emptyset$ and some constant $c\in\rr$, we have $E_s(u)(y)=c$ for almost every $y\in B(x, r)\cap\partial\boz$.
\end{defn}



\subsection{Fine Topology}

In this section, we recall some basic facts about the fine topology on $\rn$. It is the coarsest topology on $\rn$ in which all superharmonic functions on $\rn$ are continuous, see \cite[Chapter 12]{HKM:book}.

Let us recall the notion of variational $p$-capacity \cite{GResh, HKM:book, M}.

\begin{defn}\label{defn:cap}
A condenser in a domain $\boz\subset\rn$ is a pair $(E, F)$ of bounded subsets of $\overline\boz$ with $\dist(E, F)>0$. Fix $1\leq p<\fz$. The set of admissible functions for the triple $(E,F;\Omega)$ is
\[\mathcal W_p(E, F; \boz)=\{u\in W^{1,p}(\boz)\cap C(\boz\cup E\cup F): u\geq 1\ {\rm on}\ E\ {\rm and}\ u\leq 0\ {\rm on}\ F\}.\]
We define the $p$-capacity of the pair $(E, F)$ with respect to $\boz$ by setting. 
\[Cap_p(E, F;\boz)=\inf_{u\in\mathcal W_p(E, F; \boz)}\int_\boz|\nabla u(x)|^pdx.\]
\end{defn}

The following lemma gives the basic Teichm\"uller-type capacity estimate. The interested readers can find a proof in \cite{HKacta}.

\begin{lem}\label{lem2.1}
Let $B\subset\rr^n$ be a ball with radius $r$ and $n-1<p<\fz$. Suppose that $E, F\subset B$ are connected subsets with $\dist(E, F)>0$ and so that $\diam E\geq\delta r$ and $\diam F\geq\delta r$ for some $0<\delta<2$. Then we have
\begin{equation}\label{eq2.1}
Cap_p(E,F;B)\geq Cr^{n-p},
\end{equation}
 where the constant $C$ only depends on $\delta,$ $n$ and $p$. The inequality also holds for $p=1$ when $n=2$.
\end{lem}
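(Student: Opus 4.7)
My plan is to reduce the estimate to a curve-modulus lower bound via a Frostman measure construction. First, by the scaling identity $Cap_p(\lambda E, \lambda F; \lambda B)=\lambda^{n-p}Cap_p(E, F; B)$, applying $\lambda=1/r$ reduces the claim to a uniform lower bound $Cap_p(E', F'; B(0,1))\ge C(\delta, n, p)$ for connected subsets $E', F'\subset \overline{B(0,1)}$ of diameter at least $\delta$ with positive mutual distance. Write $B$ for the unit ball. For every admissible $u\in \mathcal W_p(E', F'; B)$, continuity on $B\cup E'\cup F'$ together with the pointwise boundary inequalities forces $\int_\gamma|\nabla u|\, ds\ge 1$ for ($p$-a.e.) rectifiable curve $\gamma\subset B$ joining $E'$ to $F'$, so $|\nabla u|$ is an admissible density for the curve family $\Gamma(E', F'; B)$ and $\int_B|\nabla u|^p\, dx\ge \Mod_p(\Gamma(E', F'; B))$. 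It therefore suffices to bound this modulus from below.

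Because $E'$ is connected with diameter $\ge \delta$, $\mathcal H^1(E')\ge \delta$, and a standard Frostman covering argument produces a probability Borel measure $\mu_{E'}$ supported on $E'$ with linear growth $\mu_{E'}(B(x, \rho))\le C\rho/\delta$; construct $\mu_{F'}$ on $F'$ analogously. For any admissible density $\rho\ge 0$ for the modulus, convexity of $B$ guarantees $[x, y]\subset B$ whenever $x\in E'$, $y\in F'$, so $\int_{[x, y]}\rho\, ds\ge 1$. Integrating against $\mu_{E'}\otimes \mu_{F'}$ and invoking Fubini on the resulting double average over straight segments rewrites the inequality as $1\le \int_B \rho(z)\, K(z)\, dz$ for a kernel $K$ determined by the mutual-segment density of the two Frostman measures; Hölder's inequality then gives $\|\rho\|_{L^p}\ge \|K\|_{L^{p'}}^{-1}$ and hence $\Mod_p(\Gamma)\ge \|K\|_{L^{p'}}^{-p}$.

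The main obstacle is the estimate $\|K\|_{L^{p'}}\le C(\delta, n, p)$. The kernel $K(z)$ is controlled by the $1$-Riesz potential $z\mapsto \int|z-x|^{1-n}\, d\mu_{E'}(x)$ (and its $\mu_{F'}$ counterpart) of a measure with linear growth; a direct calculation—tested on the model case of $\mathcal H^1$ on a unit segment—shows this potential lies in $L^{p'}_{\mathrm{loc}}$ precisely when $p' < (n-1)/(n-2)$, i.e., when $p > n-1$. This is exactly the hypothesis, and sharpness is reflected by the fact that for $p\le n-1$ and $n\ge 3$, two disjoint line-segment continua of diameter $\ge \delta$ can have vanishing $p$-capacity. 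The borderline case $n=2$, $p=1$ I would handle directly via the coarea formula $\int_B|\nabla u|\, dx=\int_0^1\mathcal H^1(\{u=t\})\, dt$: each level set $\{u=t\}$ with $t\in(0, 1)$ separates $E'$ from $F'$ in the unit disc, and in the plane any rectifiable set separating two continua of diameter $\ge \delta$ has $\mathcal H^1$-length at least $c(\delta)>0$ by the standard shadow/projection argument, which gives $\int_B|\nabla u|\, dx\ge c(\delta)$ and the desired bound after scaling.
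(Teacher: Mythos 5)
Your reduction to a modulus bound is fine (scaling, Fuglede, and the Frostman measure with linear growth on a continuum are all standard), but the central step fails: the family of \emph{straight segments} joining $E'$ to $F'$ can have zero $p$-modulus for every $p$, so no positive lower bound for $Cap_p(E',F';B)$ can come out of it. Concretely, take $E'$ and $F'$ to be two disjoint collinear segments of length $\delta$ on the $x_1$-axis, separated by a gap $G$ of length $a>0$. Every segment $[x,y]$ with $x\in E'$, $y\in F'$ lies on that line and contains $G$, so $\rho=\frac1a\chi_{T_\varepsilon}$ ($T_\varepsilon$ the $\varepsilon$-neighbourhood of $G$) is admissible for your segment family and $\int\rho^p\sim a^{1-p}\varepsilon^{n-1}\to0$. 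In the same configuration the averaged measure $\sigma=\int\!\!\int \mathcal H^1\lfloor[x,y]\,d\mu_{E'}d\mu_{F'}$ is supported on a line, hence singular: there is no kernel $K\in L^{p'}$ with $d\sigma\le K\,dz$, and the H\"older duality step collapses. The assertion that $K$ is controlled by the $1$-Riesz potential of a linear-growth measure is exactly where this is hidden: that control requires the \emph{other} measure to be spread out $n$-dimensionally (growth $\mu(B(y,\rho))\lesssim\rho^n$), not merely of linear growth, so testing the potential on $\mathcal H^1$ of a segment does not validate the kernel bound. Since the capacity of two collinear continua is positive for $p>n-1$ while your lower bound degenerates to $0$, the gap is not a technicality but a wrong intermediate reduction.

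The missing idea is to bring in some $(n-1)$- or $n$-dimensional spreading before dualizing. Two standard repairs: (i) the sphere-slicing argument — for a suitable center, both $E'$ and $F'$ meet every sphere $S(z_0,t)$ for $t$ in an interval of length $\gtrsim\delta$, and on each such sphere the two-point (or two-cap) $p$-capacity estimate $\int_{S_t}|\nabla u|^p\,d\mathcal H^{n-1}\ge ct^{n-1-p}$ holds precisely because $p>n-1$; integrating in $t$ gives $Cap_p\ge c(\delta,n,p)r^{n-p}$ (this is the same device the paper itself uses on the spheres $S^+_\rho$ in the proof of Theorem 1.2); or (ii) the Loewner-type argument, joining each continuum to an intermediate spherical shell where an $n$-regular measure is available and then gluing curve families. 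Note also that the paper does not prove this lemma at all: it is quoted from \cite{HKacta}, so any proof you give is necessarily a different route; yours would be a nice elementary one if the kernel step worked, but as written it does not. Finally, in the $p=1$, $n=2$ case your coarea reduction is reasonable, but the claim that a set separating two continua of diameter $\ge\delta$ in the disc has length $\ge c(\delta)$ is not a ``shadow/projection'' triviality (for collinear continua the separating set can have arbitrarily short projection onto the direction of the common line); it needs the planar separation/duality lemma, so that case too is only sketched.
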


We have the following capacity estimate for concentric balls. See, for example, \cite[page 35]{HKM:book}.
\begin{equation}\label{equa:cap}
Cap_p(B(x,r), A(x; R, 2R); B(x, 2R))=\begin{cases}\omega_{n-1}\lf(\frac{|n-p|}{p-1}\r)^{p-1}\lf|R^{\frac{p-n}{p-1}}-r^{\frac{p-n}{p-1}}\r|^{1-p} & p\neq n\\
\omega_{n-1}\log^{1-n}\frac{R}{r} & p=n\, \end{cases}
\end{equation}
where $0<r<R<\fz$ and $\omega_{n-1}$ means the $(n-1)$-dimensional volume of the unit sphere $S^{n-1}(0, 1)$. 

Now, we are ready to define quasi-continuous functions, see \cite{HKM:book, M}.
\begin{defn}\label{defn:quasicon}
Let $1\leq p<\fz$. A function $u\in L^1_{\rm loc}(\boz)$ is said to be $p$-quasi-continuous if, for every $\epsilon>0$, there exists a set $E_\epsilon\subset\boz$ with $Cap_p(E_\epsilon,\partial\boz; \boz)<\epsilon$ such that $u\big|_{\boz\setminus E_\epsilon}$ is continuous.
\end{defn}

We record the fact that every Sobolev function can be redefined in a set of measure zero so as to become quasi-continuous. See \cite[Chapter 4]{HKM:book} or \cite{M}.

\begin{lem}\label{lem:quasire}
Let $1\leq p<\fz$ and let $u\in W^{1,p}(\boz)$. Then there exists a $p$-quasi-continuous function $\tilde u\in W^{1,p}(\boz)$ with $\tilde u(z)=u(z)$ for almost every $z\in\boz$. Furthermore, at every point of continuity of $u$, we have $\tilde u(z)=u(z)$.
\end{lem}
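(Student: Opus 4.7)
The plan follows the classical route in potential theory: smooth approximation, a capacitary weak-type estimate, and a Borel--Cantelli-style extraction of a pointwise limit which equals $u$ almost everywhere and is continuous off sets of arbitrarily small capacity.

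First, I will invoke the Meyers--Serrin theorem to choose $u_k\in C^\infty(\boz)\cap W^{1,p}(\boz)$ with $u_k\to u$ in $W^{1,p}(\boz)$, and pass to a subsequence satisfying $\|u_{k+1}-u_k\|_{W^{1,p}(\boz)}\le 4^{-k}$. The key ingredient is the capacitary Markov-type bound
\[
Cap_p\bigl(\{x\in\boz:|v(x)|\ge\lambda\},\partial\boz;\boz\bigr)\le \lambda^{-p}\|v\|_{W^{1,p}(\boz)}^{p},
\]
valid for every $v\in C(\boz)\cap W^{1,p}(\boz)$ and $\lambda>0$, obtained by testing $Cap_p$ in Definition \ref{defn:cap} against the truncation $\varphi=\min(|v|/\lambda,1)$ after a mild modification near $\partial\boz$ enforcing the condition on $F=\partial\boz$. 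Applying this bound to $v=u_{k+1}-u_k$ with $\lambda=2^{-k}$, the level sets $A_k=\{|u_{k+1}-u_k|\ge 2^{-k}\}$ satisfy $Cap_p(A_k,\partial\boz;\boz)\le 2^{-kp}$, so $E_j:=\bigcup_{k\ge j}A_k$ has $Cap_p(E_j,\partial\boz;\boz)\to 0$ by countable subadditivity.

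On $\boz\setminus E_j$ the telescoping series $\sum_k(u_{k+1}-u_k)$ is dominated termwise by $2^{-k}$, so $u_k$ converges uniformly to a continuous function. I then define $\tilde u$ as the pointwise limit on $\boz\setminus F$ with $F:=\bigcap_j E_j$, and set $\tilde u:=u$ on $F$ (a set of zero capacity and hence zero Lebesgue measure). Since $\tilde u|_{\boz\setminus E_j}$ is continuous for each $j$ and $Cap_p(E_j,\partial\boz;\boz)<\epsilon$ eventually, $\tilde u$ is $p$-quasi-continuous in the sense of Definition \ref{defn:quasicon}, and the $L^p$-convergence $u_k\to u$ forces $\tilde u=u$ almost everywhere. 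For the final assertion, suppose the given representative of $u$ is continuous at some $z_0\in\boz$ and pick $j$ so large that $z_0\notin E_j$: then $\tilde u|_{\boz\setminus E_j}$ is continuous at $z_0$, $u$ is continuous at $z_0$, and the two functions agree along any sequence in $\boz\setminus E_j$ accumulating at $z_0$, which exists because $E_j$ has measure zero. Passing to the limit along such a sequence forces $\tilde u(z_0)=u(z_0)$.

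The main obstacle is verifying the capacitary Markov-type bound within the strict admissibility class $\mathcal W_p$ of Definition \ref{defn:cap}: the class insists on continuity up to $E\cup F$ together with the ordering $\varphi\ge 1$ on $E=\{|v|\ge\lambda\}$ and $\varphi\le 0$ on $F=\partial\boz$. The truncation manifestly satisfies the ordering, but an exhaustion of $\boz$ by open subsets and a standard smoothing of $|v|$ are needed to put $\varphi$ cleanly in $\mathcal W_p$; once this is arranged, the remaining steps are essentially standard potential theory.
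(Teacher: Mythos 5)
The paper does not actually prove this lemma: it is recorded as a known fact with a pointer to \cite[Chapter 4]{HKM:book} and \cite{M}, and your outline (Meyers--Serrin approximation, a capacitary Chebyshev estimate, telescoping and quasi-uniform convergence) is indeed the standard argument from those sources. However, as written your proposal has two concrete gaps.

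First, the capacitary Markov bound is not merely ``awkward'' for the capacity of Definition \ref{defn:cap}; it can fail outright. Admissibility for the pair $(\{|v|\ge\lambda\},\partial\boz;\boz)$ requires $\varphi\le 0$ on $\partial\boz$ \emph{and} continuity of $\varphi$ on $\boz\cup E\cup\partial\boz$, so if the level set $E=\{|v|\ge\lambda\}$ has points accumulating at $\partial\boz$ (which nothing prevents for $v=u_{k+1}-u_k$), then any admissible $\varphi$ would have to satisfy $\varphi\ge 1$ along a sequence tending to a boundary point where $\varphi\le 0$, so the admissible class is empty and the capacity is undefined/infinite, while the right-hand side $\lambda^{-p}\|v\|^p_{W^{1,p}}$ stays finite. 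Hence the truncation does \emph{not} ``manifestly satisfy the ordering'' on $F=\partial\boz$, and no ``mild modification near $\partial\boz$'' can repair this. The standard remedy is to change the capacity used in the estimate, not the test function: work on an exhaustion $\omega_m\Subset\omega_{m+1}\Subset\boz$, estimate the capacity of $A_k\cap\omega_m$ relative to $\omega_{m+1}$ by the truncation multiplied by a cutoff (compactly supported admissible functions are then automatically admissible for the pair with $\partial\boz$), and distribute thresholds over $k$ and $m$ so that the exceptional sets still have small capacity; one also has to handle the subadditivity step with some care, since the continuity requirement in $\mathcal W_p$ means a supremum of admissible functions is not admissible. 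This localization is exactly where the work lies, and it is deferred in your write-up. (The same looseness is present in Definition \ref{defn:quasicon} of the paper, which is why the paper cites \cite{HKM:book} rather than arguing with its own condenser.)

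Second, your proof of the ``furthermore'' clause is incorrect as stated. The sets $E_j=\bigcup_{k\ge j}A_k$ do \emph{not} have measure zero (only $|E_j|\le\sum_{k\ge j}2^{-kp}$), so there need not be any sequence in $(\boz\setminus E_j)\cap\{\tilde u=u\}$ accumulating at $z_0$; indeed $E_j$ can contain a full punctured ball around $z_0$. Worse, the claim is not just unproved but can fail for your construction: $W^{1,p}$-convergence does not control values at a single point, so one can perturb an approximating sequence by bumps of height $1$ at $z_0$ with vanishing $W^{1,p}$-norms (possible for $p<n$), still satisfying $\|u_{k+1}-u_k\|_{W^{1,p}}\le 4^{-k}$, for which the telescoping limit satisfies $\tilde u(z_0)=u(z_0)+1$ at a continuity point $z_0\notin F$. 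To obtain the last sentence of the lemma you must use approximants that converge pointwise at every continuity point of $u$ (e.g.\ mollification-based Meyers--Serrin approximants, for which $u_k(z_0)\to u(z_0)$ whenever $u$ is continuous at $z_0$), or identify $\tilde u$ at such points through Lebesgue averages; with that ingredient the conclusion is immediate from $\tilde u(z_0)=\lim_k u_k(z_0)$ off $F$ and from your definition $\tilde u:=u$ on $F$.
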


We continue with the definition of $p$-capacitory density.
\begin{defn}\label{de:capden}
Let $1\leq p<\fz$. A set $E\subset\rn$ is said to be $p$-capacitory dense at the point $z\in\rn$, if 
\[\limsup_{r\to0^+}\frac{Cap_p\lf(E\cap B\lf(z, \frac{r}{4}\r), E\cap A\lf(z; \frac{r}{2}, \frac{3r}{4}\r); B(z, r)\r)}{Cap_p\lf(B\lf(z,\frac{r}{4}\r), A\lf(z; \frac{r}{2}, \frac{3r}{4}\r); B(z, r)\r)}>0.\]
\end{defn}

The following definition of $p$-thin sets can be found in \cite[Chapter 12]{HKM:book} for $1<p<\fz$ and in \cite{panu1} for $p=1$. 
\begin{defn}\label{defn:p-thin}
Let $1<p<\fz$. A set $E$ is $p$-thin at $x$ if 
\[\int_0^1\lf(\frac{Cap_p(E\cap B(x, t),A(x; 2t, 3t); B(x, 4t))}{Cap_p(B(x, t), A(x; 2t, 3t); B(x, 4t))}\r)^{\frac{1}{p-1}}\frac{dt}{t}<\fz.\]
A set $E$ is $1$-thin at $x$ if 
\[\lim_{t\to 0}t\frac{Cap_1(E\cap B(x, t), A(x; 2t,3t); B(x, 4t))}{\mathcal H^n(B(x, t))}=0.\]
Furthermore, we say that $E$ is $p$-fat at $x$ if $E$ is not $p$-thin at $x$.
\end{defn}



\begin{defn}\label{defn:Ftopology}
Let $1\leq p<\fz$. A set $U\subset\rn$ is $p$-finely open if $\rn\setminus U$ is $p$-thin at every $x\in U$, and
\[\tau_p:=\{U\subset\rn; U\ {\rm is}\ p-{\rm finely}\ {\rm open}\}\]
is the $p$-fine topology on $\rn$.
\end{defn} 
The following lemma comes from \cite[Corollary 12.18]{HKM:book}.
\begin{lem}\label{lem:FC}
Suppose that a set $E\subset\rn$ is $p$-fat at the point $x\in\rn$. Then every $p$-finely open neighborhood of $x$ intersects $E$. Consequently, $x$ is a  $p$-fine limit point of $E$. 
\end{lem}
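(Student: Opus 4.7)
The plan is to prove this by contradiction using the defining property of the fine topology and monotonicity of the variational $p$-capacity. Suppose, toward a contradiction, that $U$ is a $p$-finely open neighborhood of $x$ with $U\cap E=\emptyset$. Then $E\subset \rn\setminus U$, so in particular $E\cap B(x,t)\subset (\rn\setminus U)\cap B(x,t)$ for every $t>0$.

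Next I would invoke monotonicity of the variational $p$-capacity under set inclusion of the first component: any admissible function for the pair
\[\bigl((\rn\setminus U)\cap B(x,t),\,A(x;2t,3t);B(x,4t)\bigr)\]
is also admissible for
\[\bigl(E\cap B(x,t),\,A(x;2t,3t);B(x,4t)\bigr),\]
so the latter $p$-capacity is dominated by the former. Since $x\in U$ and $U$ is $p$-finely open, by Definition \ref{defn:Ftopology} the set $\rn\setminus U$ is $p$-thin at $x$. I would then feed this capacity comparison into the integral/limit defining $p$-thinness from Definition \ref{defn:p-thin}: for $1<p<\fz$ the Wiener-type integral for $E$ is bounded above by that for $\rn\setminus U$, which is finite; for $p=1$ the corresponding limit is zero. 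Either way, $E$ is $p$-thin at $x$, contradicting the assumption of $p$-fatness.

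For the final assertion that $x$ is a $p$-fine limit point of $E$, I would note that the single point $\{x\}$ is $p$-thin at $x$ (the capacity $Cap_p(\{x\}\cap B(x,t),\cdot;\cdot)$ involves a singleton, which for $1\le p\le n$ has zero $p$-capacity, and in the remaining case the fine topology collapses to the Euclidean one so the claim is trivial). Thinness is a local, additively stable condition (the union of two $p$-thin sets is $p$-thin), so $E\setminus\{x\}$ remains $p$-fat at $x$. Applying the main assertion already proved to $E\setminus\{x\}$ in place of $E$ shows that every $p$-finely open neighborhood of $x$ meets $E\setminus\{x\}$, i.e.\ $x$ is a $p$-fine limit point of $E$.

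The main obstacle I anticipate is the bookkeeping around the two different definitions of thinness (the Wiener-type integral for $p>1$ versus the limit condition for $p=1$) and verifying that the subadditivity/monotonicity properties of $Cap_p$ used above really do transfer the thinness of the larger set to the smaller one in both regimes. Once this is handled, the contradiction argument itself is immediate from the inclusion $E\subset\rn\setminus U$ and monotonicity.
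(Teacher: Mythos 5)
The paper itself offers no proof of this lemma; it is quoted from \cite[Corollary 12.18]{HKM:book}, so there is no in-paper argument to compare against. Your proof of the first assertion is correct and is the standard one: if $U$ is $p$-finely open, $x\in U$ and $U\cap E=\emptyset$, then $E\subset\rn\setminus U$, monotonicity of $Cap_p$ in the first entry (admissible functions for the larger set are admissible for the smaller) bounds the capacity of $E\cap B(x,t)$ by that of $(\rn\setminus U)\cap B(x,t)$, and the $p$-thinness of $\rn\setminus U$ at $x$ from Definition \ref{defn:Ftopology} then forces $E$ to be $p$-thin at $x$, both via the Wiener-type integral for $p>1$ and via the limit condition for $p=1$; this contradicts $p$-fatness.

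The one weak point is the passage to the ``consequently'' statement. For $1\le p\le n$ your reduction is fine: a singleton has zero $p$-capacity, so by subadditivity $E$ and $E\setminus\{x\}$ are simultaneously $p$-thin at $x$, and applying the first assertion to $E\setminus\{x\}$ gives that $x$ is a $p$-fine limit point of $E$. But your dismissal of the case $p>n$ (``the fine topology collapses to the Euclidean one so the claim is trivial'') does not close it: for $p>n$ a point has positive capacity comparable to that of the concentric ball (by the Morrey embedding any admissible function for $\lf(\{x\},A(x;2t,3t);B(x,4t)\r)$ has $\|\nabla u\|_{L^p}^p\geq ct^{n-p}$), so under Definition \ref{defn:p-thin} the set $E=\{x\}$ is $p$-fat at $x$, while $x$ is obviously not a limit point of $\{x\}$ in the Euclidean ($=$ fine) topology. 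Thus for $p>n$ the accumulation-point reading of the second sentence can fail when $x$ is an isolated point of $E$; it does hold whenever $x\notin E$ (where it is literally the first assertion), which is exactly the situation in which the paper uses the lemma ($E=\boz$ open, $x\in\partial\boz$), and it also holds if ``$p$-fine limit point'' is read as ``point of the $p$-fine closure''. So either restrict the second claim to $p\le n$ or to $x\notin E$, or state it in terms of the fine closure; apart from this edge case your argument is complete.
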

By a result due to Fuglede \cite{bent}, we have the following lemma.
\begin{lem}\label{lem:qsandfi}
Let $1\leq p<\fz$. If a function $u$ is $p$-quasi-continuous, then $u$ is $p$-finely continuous except on a subset of $p$-capacity zero.
\end{lem}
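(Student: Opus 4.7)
The plan is to promote the Euclidean-continuity-off-small-capacity-sets characterization of $p$-quasi-continuity to fine continuity by showing that the exceptional sets produced by quasi-continuity are $p$-thin at all but a $p$-polar set of points. Concretely, I would invoke Definition \ref{defn:quasicon} with $\epsilon=4^{-k}$ to obtain, for each $k\in\nn$, a set $E_k\subset\boz$ with $Cap_p(E_k,\partial\boz;\boz)<4^{-k}$ and with $u|_{\boz\setminus E_k}$ continuous in the Euclidean sense. Setting
\[
F_m=\bigcup_{k\geq m}E_k,\qquad F=\bigcap_{m=1}^{\fz}F_m,
\]
countable subadditivity of the variational $p$-capacity from Definition \ref{defn:cap} (obtained by gluing truncated admissible competitors) gives $Cap_p(F_m,\partial\boz;\boz)\leq\sum_{k\geq m}4^{-k}\to 0$, hence $Cap_p(F,\partial\boz;\boz)=0$.

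Next, I would argue pointwise. Fix $x\in\boz\setminus F$ and choose $m=m(x)$ so that $x\notin F_m$. Since $\boz\setminus F_m\subset\boz\setminus E_m$, the function $u$ coincides on $\boz\setminus F_m$ with the Euclidean-continuous restriction $u|_{\boz\setminus E_m}$. Provided the complement $\boz\setminus F_m$ happens to be a $p$-fine neighborhood of $x$, that is, provided $F_m$ is $p$-thin at $x$ in the sense of Definition \ref{defn:p-thin}, the Euclidean continuity of $u|_{\boz\setminus E_m}$ at $x$ along $\boz\setminus F_m$ upgrades to $p$-fine continuity of $u$ at $x$.

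It therefore suffices to establish the following Kellogg-type statement: for each $m$, the set
\[
N_m:=\{x\in\boz\setminus F_m:\ F_m\ \text{is not}\ p\text{-thin at}\ x\}
\]
has $p$-capacity zero. Granting this, the points where $u$ can fail to be $p$-finely continuous are contained in $F\cup\bigcup_m N_m$, which is a countable union of sets of $p$-capacity zero and hence itself of $p$-capacity zero by subadditivity. For $1<p<\fz$ I would obtain the Kellogg property through Wiener's series criterion: the geometric decay $Cap_p(F_m)\leq 4^{-m+1}$ combined with standard estimates for the nonlinear $p$-potential of $F_m$ forces the Wiener integral appearing in Definition \ref{defn:p-thin} to converge at $p$-quasi-every point outside $F_m$. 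For $p=1$ the thinness in Definition \ref{defn:p-thin} is a vanishing-limit condition with no series structure, and I would instead use the coarea-based description of $Cap_1$ together with a Vitali-type covering to transfer smallness in $1$-capacity to vanishing of the relative density at $1$-quasi-every point.

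The main obstacle is precisely this Kellogg-type step. The passage from $Cap_p(F_m)\to 0$ to $p$-thinness of $F_m$ off a polar set is comfortable in the nonlinear potential theory of $1<p<\fz$, where one has Wiener's criterion and $L^{p/(p-1)}$ boundedness of the relevant potentials; the genuinely delicate case is $p=1$, where the absence of the series form of thinness forces one to work through perimeter/$BV$-capacity identifications before the argument closes.
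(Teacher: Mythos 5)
First, note that the paper does not prove Lemma \ref{lem:qsandfi} at all: it is quoted as Fuglede's theorem \cite{bent} (with Definition \ref{defn:p-thin} for $p=1$ taken from \cite{panu1}), so what you are really offering is a sketch of the cited result, along the standard route. The structure (exceptional sets $E_k$ from Definition \ref{defn:quasicon}, the sets $F_m$, and the observation that thinness of $F_m$ at $x\notin F_m$ upgrades Euclidean continuity of $u|_{\boz\setminus E_m}$ to $p$-fine continuity at $x$) is correct. The genuine gap is your ``Kellogg-type statement'': the claim that $N_m=\{x\in\boz\setminus F_m:\ F_m\ \text{not $p$-thin at}\ x\}$ has $p$-capacity zero is not the Kellogg property and is false in general. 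The Kellogg property concerns points \emph{of} the set ($Cap_p(\{x\in E:\ E\ \text{is $p$-thin at}\ x\})=0$), whereas $N_m$ consists of points \emph{outside} $F_m$ at which $F_m$ is non-thin, and no smallness of $Cap_p(F_m)$ makes this set polar: if $F_m$ were an open ball of tiny radius, it is non-thin at every point of its boundary sphere, a set of positive $p$-capacity disjoint from $F_m$. Consequently the final bookkeeping, ``the bad set is contained in $F\cup\bigcup_m N_m$, a countable union of null sets,'' does not close.

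What is true, and what the proof cited by the paper actually uses, is a weak-type estimate: the set of points where $F_m$ fails to be thin (its base) has $p$-capacity at most $C\,Cap_p(F_m)\leq C\,4^{-m+1}$, obtained from a lower bound for the capacitary potential at non-thin points together with a capacitary weak-type inequality (see \cite{HKM:book}, Chapter 12). With this, one does not take a union of the $N_m$; instead one notes that the set of points where fine continuity can fail is contained in $F\cup\bigcup_{m\geq M}N_m$ for \emph{every} $M$, a set of capacity at most $C\,4^{-M}$, and lets $M\to\fz$. So the argument is salvageable, but only after replacing your nullity claim by the weak-type bound and changing the union bookkeeping to this intersection-type conclusion; moreover the weak-type bound itself is exactly the content you defer to ``standard estimates,'' and for $p=1$, where Definition \ref{defn:p-thin} has no Wiener series, that step needs the BV/1-capacity machinery of \cite{panu1} and is not supplied. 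As it stands, the pivotal step of your proposal is both misstated and unproved, whereas the paper simply invokes \cite{bent}.
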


By using the lemmata above, we can prove the following lemma. It is also a corollary of the result in \cite{Tero}.

\begin{lem}\label{lem:u=1}
Let $\boz\subset\rn$ be a domain such that $\boz$ is $p$-fat at almost every point of the boundary $\partial\boz$. If $u\in W^{1,p}(\rn)$ is a Sobolev function such that $u\big|_{B(x, r)\cap\boz}\equiv c$, where $x\in\partial\boz$, $0<r<1$ and $c\in\rr$, then $u(z)=c$ for almost every $z\in\partial\boz\cap B(x, r)$.
\end{lem}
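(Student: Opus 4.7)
The plan is to pass to a $p$-quasi-continuous representative of $u$ and combine $p$-fine continuity from Lemma \ref{lem:qsandfi} with the $p$-fat assumption through Lemma \ref{lem:FC}. First, I would invoke Lemma \ref{lem:quasire} to obtain a $p$-quasi-continuous representative $\tilde u \in W^{1,p}(\rn)$ of $u$ with $\tilde u = u$ almost everywhere. In particular $\tilde u = c$ almost everywhere on the open set $B(x,r)\cap\boz$. A standard uniqueness property of quasi-continuous representatives on open sets then upgrades this to $\tilde u(z) = c$ for every $z \in (B(x,r)\cap\boz)\setminus N_0$, where $Cap_p(N_0)=0$.

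By Lemma \ref{lem:qsandfi}, there is a set $N_1$ with $Cap_p(N_1)=0$ off which $\tilde u$ is $p$-finely continuous. Let $F\subset \partial\boz$ be the exceptional set of points where $\boz$ fails to be $p$-fat; by hypothesis $|F|=0$. Since sets of $p$-capacity zero carry Lebesgue measure zero, the set
\[E := (\partial\boz\cap B(x,r))\setminus (N_0\cup N_1\cup F)\]
has the same Lebesgue measure as $\partial\boz\cap B(x,r)$. Because $u=\tilde u$ almost everywhere, it suffices to prove $\tilde u(y)=c$ for every $y\in E$.

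The core step is to fix such a $y\in E$ and argue by contradiction. Suppose $\delta := |\tilde u(y)-c|>0$. By $p$-fine continuity of $\tilde u$ at $y$, there is a $p$-finely open neighborhood $V$ of $y$ with $|\tilde u(z)-\tilde u(y)|<\delta/2$ for every $z\in V$. Since $B(x,r)$ is open (hence $p$-finely open) and $\rn\setminus N_0$ is $p$-finely open (because $N_0$ has zero $p$-capacity and is therefore $p$-thin at every point), the intersection $W := V\cap B(x,r)\cap(\rn\setminus N_0)$ is again a $p$-finely open neighborhood of $y$. As $\boz$ is $p$-fat at $y$, Lemma \ref{lem:FC} produces a point $z_0\in W\cap\boz \subset (B(x,r)\cap\boz)\setminus N_0$, at which $\tilde u(z_0)=c$ by the first paragraph. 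But $z_0\in V$ forces $|c-\tilde u(y)|<\delta/2$, contradicting the choice of $\delta$. Hence $\tilde u(y)=c$, which completes the argument.

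The step I expect to be the main technical obstacle is the upgrade from ``$\tilde u = c$ almost everywhere on the open set $B(x,r)\cap\boz$'' to ``$\tilde u = c$ quasi-everywhere on $B(x,r)\cap\boz$''. This is well-known for $p$-quasi-continuous representatives of Sobolev functions but must be handled carefully, especially at the endpoint $p=1$; one may either cite it directly (the lemma is, as the authors note, a corollary of \cite{Tero}) or argue from the definition of quasi-continuity, excising small exceptional sets of $p$-capacity and using that two functions continuous on the same relatively closed subset of an open set and equal almost everywhere there must in fact coincide pointwise on that subset.
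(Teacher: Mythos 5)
Your argument is correct and follows essentially the same route as the paper: pass to a $p$-quasi-continuous representative (Lemma \ref{lem:quasire}), use Fuglede's fine continuity (Lemma \ref{lem:qsandfi}) off a $p$-capacity-zero (hence Lebesgue-null) set, and use $p$-fatness together with Lemma \ref{lem:FC} to conclude $\tilde u=c$ at almost every point of $\partial\boz\cap B(x,r)$. The only difference is that the step you flag as the main obstacle is sidestepped in the paper: since $u$ is constant, hence continuous, on the open set $B(x,r)\cap\boz$, the last clause of Lemma \ref{lem:quasire} already gives $\tilde u\equiv c$ pointwise on that whole set, so no ``a.e.\ to quasi-everywhere'' upgrade (via \cite{Tero} or otherwise) is needed.
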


\begin{proof}
By Lemma \ref{lem:quasire}, $u$ has a $p$-quasi-continuous representative $\tilde u$ with $\tilde u(z)=c$ for every $z\in\boz\cap B(x, r)$. By Lemma \ref{lem:qsandfi} and \cite[Theorem 4.17]{Evans}, there exists a subset $E_1\subset\rn$ with $|E_1|=0$ such that $\tilde u$ is $p$-finely continuous on $\rn\setminus E_1$. Since $\boz$ is $p$-fat at almost every $z\in\partial\boz$, by Lemma \ref{lem:FC}, there exists a subset $E_2\subset\partial\boz$ with $|E_2|=0$ such that, for every $z\in (\partial\boz\cap B(x, r))\setminus(E_1\cup E_2)$, we have $\tilde u(z)=c$. Hence $u(z)=c$ for almost every $z\in\partial\boz\cap B(x,r)$.
\end{proof} 

\subsection{ A set function associated with the extension operator}

In this section, we will discuss the notion of additive set functions (outer measures), associated with bounded, homogeneous Sobolev extension operators, as introduced by Ukhlov in \cite{ukhlov1, ukhlov1'}. Also
see \cite{ukhlov2,ukhlov3} for related set functions.

Let $\boz\subset\rr^n$ be a bounded Sobolev $(p, q)$-extension domain and $ E:W^{1,p}(\boz)\to W^{1,q}(\rr^n)$ be the corresponding bounded and homogeneous extension operator with $1\leq q<p<\fz$. 
Suppose that $U\subset\rr^n$ is an open set such that $U\cap\boz\neq\emptyset$. Then we denote by $W^p_0(U, \boz)$ the class of continuous  functions $u\in W^{1,p}(\Omega)$ such that $u\eta$ belongs to $W^{1,p}(U\cap\Omega)\cap C_0(U\cap\Omega)$ for all smooth functions $\eta\in C_0^{\infty}(\Omega)$ (roughly, it is a class of continuous functions $u\in W^{1,p}(\boz)$ such that $u(x)=0$ for every $x\in\boz\setminus U$).

The set function $\Phi$  is defined by setting
 \begin{equation}\label{setfunc11}
 \Phi(U):=\sup_{u\in W^p_0(U, \boz)}\lf(\frac{\|\nabla E(u)\|_{L^{q}(U)}}{\|u\|_{W^{1,p}(U\cap\boz)}}\r)^{\kappa},\ \ \frac{1}{k}=\frac{1}{q}-\frac{1}{p},
 \end{equation}
 for every open set $U\subset\rn$ that intersects $\boz$ and by setting $\Phi(U)=0$ for those open
 sets that do not intersect $\boz.$
 
Our set function is a modification of the set function in \cite{ukhlov1, ukhlov1'}. 
The following theorem gives the important properties of $\Phi$.

\begin{thm}\label{thm:sofun1}
Let $1\leq q<p<\fz$. Let $\boz\subset\rr^n$ be  a bounded Sobolev $(p, q)$-extension domain and $ E:W^{1,p}(\boz)\to W^{1,q}(\rr^n)$ be the corresponding homogeneous bounded extension operator. Then the set function $\Phi$ defined in (\ref{setfunc11}) is a nonnegative, bounded, monotone and countably additive set function defined on open subsets $U\subset\rr^n$.
\end{thm}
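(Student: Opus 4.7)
My plan is to verify the four properties in turn, with countable additivity being the substantive step. Nonnegativity is immediate because each ratio $\|\nabla E(u)\|_{L^{q}(U)}/\|u\|_{W^{1,p}(U\cap\boz)}$ is nonnegative. For boundedness, any $u \in W^p_0(U, \boz)$ satisfies $\|u\|_{W^{1,p}(\boz)} = \|u\|_{W^{1,p}(U \cap \boz)}$ by the support condition, so
$$
\|\nabla E(u)\|_{L^q(U)} \leq \|E(u)\|_{W^{1,q}(\rn)} \leq \|E\|\cdot\|u\|_{W^{1,p}(\boz)},
$$
and hence $\Phi(U) \leq \|E\|^\kappa$. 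Monotonicity is equally direct: if $U_1 \subset U_2$ then $W^p_0(U_1, \boz) \subset W^p_0(U_2, \boz)$, and every test function $u$ in the smaller class has $\|\nabla E(u)\|_{L^q(U_1)} \leq \|\nabla E(u)\|_{L^q(U_2)}$ while $\|u\|_{W^{1,p}(U_1 \cap \boz)} = \|u\|_{W^{1,p}(U_2 \cap \boz)}$.

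The main content is countable additivity for pairwise disjoint open sets $U_1, U_2, \dots$ with union $U$. The choice of exponent $\kappa = pq/(p-q)$ is engineered so that $\kappa/q$ and $p/q$ are H\"older-conjugate, i.e.\ $q/\kappa+q/p=1$; this is what makes an additive (rather than merely subadditive) set function possible in the $(p,q)$-setting. For subadditivity I would start from $u \in W^p_0(U, \boz)$, decompose $u = \sum_i u_i$ with $u_i := u\,\chi_{\overline{U_i}}$, and verify that each $u_i$ belongs to $W^p_0(U_i, \boz)$: disjointness of the $U_i$ forces any boundary contact between the $\overline{U_i}$'s to lie outside $U$, where $u$ already vanishes continuously. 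By disjoint supports,
$$
\|u\|_{W^{1,p}(U \cap \boz)}^p = \sum_i \|u_i\|_{W^{1,p}(U_i \cap \boz)}^p.
$$
The pivotal localization estimate to establish is
$$
\|\nabla E(u)\|_{L^q(U_i)} \leq \Phi(U_i)^{1/\kappa}\|u_i\|_{W^{1,p}(U_i \cap \boz)}\qquad\text{for every }i.
$$
Granting it, raising to the $q$-th power, summing in $i$, and applying H\"older's inequality with exponents $\kappa/q$ and $p/q$ gives
$$
\|\nabla E(u)\|_{L^q(U)}^q \leq \Big(\sum_i \Phi(U_i)\Big)^{q/\kappa}\|u\|_{W^{1,p}(U\cap\boz)}^q,
$$
and taking the supremum over $u$ yields $\Phi(U) \leq \sum_i \Phi(U_i)$.

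For superadditivity I would, for each $i$ and each $\varepsilon > 0$, select a near-extremal test function $v_i \in W^p_0(U_i, \boz)$ and rescale it so that $\|v_i\|_{W^{1,p}(U_i\cap\boz)}^p = \Phi(U_i)$; this scaling realizes the equality case of the H\"older step above, since $q/\kappa + q/p = 1$. Forming $v := \sum_i v_i \in W^p_0(U, \boz)$ and using the localization estimate in the reverse direction produces $\|\nabla E(v)\|_{L^q(U)}^\kappa \geq (1-\varepsilon)^\kappa \big(\sum_i \Phi(U_i)\big)\,\|v\|_{W^{1,p}(U \cap \boz)}^\kappa$. Letting $\varepsilon\to 0$ gives finite additivity; countable additivity then follows by monotonicity and the uniform bound $\Phi(U) \leq \|E\|^\kappa$.

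The principal obstacle is justifying the localization estimate. Since $E$ is merely homogeneous, one does not automatically have $E(u)|_{U_i} = E(u_i)|_{U_i}$: the two functions agree on $U_i \cap \boz$ (both equal $u_i$ there, because $E$ restricted to $\boz$ is the identity), but on $U_i \setminus \boz$ the value of $E(u)$ may be influenced by $u_j$ for $j \neq i$, in which case the corresponding gradient energy could contaminate the bound. The resolution, carried out for closely related set functions in \cite{ukhlov1, ukhlov1'}, is to work with an extension operator that is essentially local with respect to disjoint portions of $\rn\setminus\boz$---informally, one whose values outside $\boz$ over a given region depend only on the input nearby inside $\boz$. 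I would adapt this construction, applied to the homogeneous operator produced by Lemma \ref{lem:homo}, so that the localization estimate holds and the H\"older-based additivity argument above goes through.
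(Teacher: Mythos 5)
Your proposal follows the paper's own route step for step: the three easy properties are handled identically (with the same bound $\Phi(U)\le\|E\|^{\kappa}$), and for additivity you use exactly the paper's two devices, namely homogeneity of $E$ to normalize near-extremal test functions (the paper's choice \eqref{equa:CONS}) and the H\"older conjugacy $\frac{q}{\kappa}+\frac{q}{p}=1$ applied to the decomposition of a test function over the disjoint sets $U_i$. So there is no genuinely different idea here; everything turns on whether the one step you yourself single out is actually closed.

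It is not, and that is a genuine gap. The localization estimate $\|\nabla E(u)\|_{L^q(U_i)}\le \Phi(U_i)^{1/\kappa}\|u_i\|_{W^{1,p}(U_i\cap\boz)}$ does not follow from homogeneity and boundedness of $E$: the definition \eqref{setfunc11} controls only $\|\nabla E(u_i)\|_{L^q(U_i)}$ for $u_i\in W^p_0(U_i,\boz)$, while $E(u)$ and $E(u_i)$ are forced to agree only on $\boz$, so their gradients on $U_i\setminus\boz$ are not comparable in either direction. Concretely, one may alter a given bounded homogeneous extension operator on the single ray $\{\lambda u\}$ (setting $E(\lambda u):=\lambda(E_0(u)+g)$ with $g$ a fixed bump supported in $U_i\setminus\overline\boz$); this keeps homogeneity, the extension property and boundedness, leaves $\Phi(U_i)$ unchanged because no multiple of $u$ lies in $W^p_0(U_i,\boz)$, yet makes $\|\nabla E(u)\|_{L^q(U_i)}$ as large as one likes — so the estimate, and with it your H\"older step, cannot be had for free; the same objection hits the ``reverse direction'' you invoke for superadditivity. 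Your proposed remedy — replacing $E$ by an ``essentially local'' operator adapted from \cite{ukhlov1,ukhlov1'} — is not carried out, and it would in any case prove additivity only for the set function attached to the modified operator, not for the $\Phi$ of the statement, which is built from the given homogeneous $E$ (for the paper's later applications that weaker conclusion might suffice, but it is not the theorem as stated). Be aware that the paper itself does not supply the argument you are missing: it works with the given $E$ and obtains both directions by asserting precisely these localized energy comparisons for its near-extremal test functions (the first inequality in \eqref{tagi}, and the last inequality of the final display of its proof). So your proposal correctly locates the crux of the additivity claim, but as written it establishes only the three easy properties and leaves countable additivity unproven; to complete it you must either justify the two localized comparisons for the given operator or carry out, in detail, the construction of a local operator and explain how the statement is to be read for it.
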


 The proof of this theorem repeats the proof of \cite[Theorem 2.1]{ukhlov1'} with  minor modifications.
Since we do not assume linearity of $E$, we give the details for the sake of completeness.

\begin{proof}
The nonnegativity, boundedness and monotonicity of $\Phi$ are immediate from the definition.  Hence it suffices to prove the additivity of $\Phi.$

Let $\{U_j\}_{j=1}^\fz$ be a sequence of pairwise disjoint open sets in $\rn.$ We may assume that $U_j\cap\boz\neq\emptyset$ for each $j.$ We define $U_0:=\bigcup_{j=1}^\fz U_j$. Then, for every $j\in\mathbb N$, since $E$ is a bounded homogeneous Sobolev extension operator, we can choose a test function $u_j\in W^p_0(U_j, \boz)$ such that 
\begin{equation}\label{tarvitaan}
\|\nabla E(u_j)\|_{L^q(U_j)}\geq\lf(\Phi(U_j)\lf(1-\frac{\epsilon}{2^j}\r)\r)^{\frac{1}{{\kappa}}}\|u_j\|_{W^{1,p}(U_j\cap\boz)}
\end{equation}
and 
\begin{equation}\label{equa:CONS}
\|u_j\|^p_{W^{1,p}(U_j\cap\boz)}=\Phi(U_j)\lf(1-\frac{\epsilon}{2^j}\r),
\end{equation}
where $\epsilon \in (0,1)$ is fixed. Set $v_N:=\sum_{j=1}^Nu_j.$ Then $v_N\in W^p_0(\bigcup_{j=1}^NU_j, \boz).$ By \eqref{tarvitaan} and \eqref{equa:CONS} we have
\begin{multline} \label{tagi}
\|\nabla E(v_N)\|_{L^q(\bigcup_{j=1}^NU_j)}
\geq \lf(\sum_{j=1}^N\lf(\Phi(U_j)\lf(1-\frac{\epsilon}{2^j}\r)\r)^{\frac{q}{{\kappa}}}\left\|u_j\right\|^q_{W^{1, p}(U_j\cap\boz)}\r)^{\frac{1}{q}}=\\
                                                     \lf(\sum_{j=1}^N\Phi(U_j)\lf(1-\frac{\epsilon}{2^j}\r)\r)^{\frac{1}{{\kappa}}}\|v_N\|_{W^{1, p}\lf(\bigcup_{j=1}^NU_j\cap\boz\r)}
                                                     \geq\lf(\sum_{j=1}^N\Phi(U_j)-\epsilon\Phi(U_o)\r)^{\frac{1}{{\kappa}}}\|v_N\|_{W^{1,p}(\bigcup_{j=1}^NU_j\cap\boz)}.
\end{multline}
Since $v_N\in W^p_0(U_0, \boz)$, we conclude from \eqref{tagi} that  
$$
\Phi(U_0)^{\frac{1}{{\kappa}}}\geq\frac{\|\nabla E(v_N)\|_{L^q\lf(\bigcup_{j=1}^NU_j\r)}}{\|v_N\|_{W^{1,p}(\bigcup_{j=1}^NU_j\cap\boz)}}\geq \lf(\sum_{j=1}^N\Phi(U_j)-\epsilon\Phi(U_o)\r)^{\frac{1}{{\kappa}}}.\nonumber
$$
By letting  first $\epsilon$ tend to zero and then using nonnegativity and monotonicity of $\Phi$ we arrive at  
$$\sum_{j=1}^\fz\Phi(U_j)\leq \Phi\lf(\bigcup_{j=1}^\fz U_j\r).$$

Towards the opposite inequality, we fix $\epsilon>0$ and pick $u\in W^p_0(U_0, \boz)$ such that
$$\|\nabla E(u)\|_{L^q(U_0)}\geq \lf(\Phi(U_0)(1-\epsilon)\r)^{\frac{1}{{\kappa}}}\|u\|_{W^{1,p}(U_0\cap\boz)}.$$
Given $j\in\mathbb N$, we define $u_j:=u\big|_{U_j\cap\boz}$. Since $u_j\in W^p_0(U_j,\boz),$ we have 
$$
\sum_{j=1}^\fz\Phi(U_j)\geq\sum_{j=1}^\fz\frac{\|\nabla E(u_j)\|_{L^q(U_j)}}{\|u_j\|_{W^{1,p}(U_j\cap\boz)}}\nonumber\\
                                \geq\frac{\lf(\sum_{j=1}^\fz\|\nabla E(u_j)\|_{L^q(U_j)}\r)^{\frac{1}{q}}}{\lf(\sum_{j=1}^\fz\|u_j\|^p_{W^{1, p}(U_j\cap\boz)}\r)^{\frac{1}{p}}}\geq\lf(\Phi(U_0)(1-\epsilon)\r)^{\frac{1}{{\kappa}}}.\nonumber
$$
Since $\epsilon$ is arbitrary, we conclude that
$$\sum_{j=1}^\fz\Phi(U_j)\geq \Phi\lf(\bigcup_{j=1}^\fz U_j\r).$$
\end{proof}

The following corollary is immediate from the definition (\ref{setfunc11}) of the set function $\Phi$.

\begin{cor}\label{cor:sofun1}
Let $1\leq q<p<\fz$. Let $\boz\subset\rr^n$ be a bounded Sobolev $(p, q)$-extension domain and $\Phi$ be the set function from \eqref{setfunc11}, associated to the corresponding homogeneous extension operator $E$. Then, for every open set $U$ with $U\cap\boz\neq\emptyset$ and each function $u\in W^p_0(U, \boz)$, we have 
\begin{equation}\label{equa:E11}
\|\nabla E(u)\|_{L^{q}(U)}\leq\Phi^{\frac{1}{\kappa}}(U)\|u\|_{W^{1,p}(U\cap\boz)},\,\, {\rm where}\,\, {1}/{\kappa}={1}/{q}-{1}/{p}.
\end{equation}
\end{cor}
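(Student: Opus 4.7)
The plan is simply to unpack the definition \eqref{setfunc11} and observe that \eqref{equa:E11} is a direct rearrangement of the defining supremum; essentially no work is required beyond handling the degenerate case $u=0$.

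First I would dispose of the trivial case. If $u=0$ in $W^{1,p}(\boz)$, then the homogeneity of $E$ granted by Lemma \ref{lem:homo} yields $E(u)=0$, so both sides of \eqref{equa:E11} vanish. Otherwise, since any $u\in W^p_0(U,\boz)$ vanishes on $\boz\setminus U$, the assumption $u\neq 0$ forces $\|u\|_{W^{1,p}(U\cap\boz)}>0$, so that the quotient appearing inside the supremum in \eqref{setfunc11} is well defined at $u$.

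Second, I would invoke the definition of $\Phi(U)$ directly. The quantity
\[\left(\frac{\|\nabla E(u)\|_{L^{q}(U)}}{\|u\|_{W^{1,p}(U\cap\boz)}}\right)^{\kappa}\]
is one of the values over which the supremum defining $\Phi(U)$ is taken, so it is bounded above by $\Phi(U)$. Taking $1/\kappa$-th roots and multiplying through by the nonnegative quantity $\|u\|_{W^{1,p}(U\cap\boz)}$ yields \eqref{equa:E11}, as required.

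I do not anticipate any real obstacle. The only mild subtlety is the $u=0$ case, where the ratio in \eqref{setfunc11} is undefined and one must instead appeal to homogeneity of $E$ to conclude $E(u)=0$; this is precisely the structural reason why the extension operator was first upgraded to a homogeneous one via Lemma \ref{lem:homo} before the set function $\Phi$ was introduced.
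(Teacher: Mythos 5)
Your proposal is correct and coincides with the paper's treatment: the paper states that the corollary is immediate from the definition \eqref{setfunc11}, and your argument is exactly that unpacking of the supremum, with the $u=0$ case handled by homogeneity of $E$ as guaranteed by Lemma \ref{lem:homo}.
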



We define the upper volume derivative $\overline D\Phi$ at a point $x\in\overline\boz$ by setting
\begin{equation}
\nonumber
\overline D\Phi(x):=\limsup_{r\to 0^+}\frac{\Phi(B(x, r))}{|B(x, r)|}.
\end{equation}

The following lemma, proved in \cite{Rado, ukhlov2}, gives the upper differentiability of $\Phi$ with respect to the Euclidean volume.

\begin{lem}\label{lem:upfinite}
Let $\Phi$ be a nonnegative, bounded, monotone and countably additive set function defined on open subsets $U\subset\rr^n$. Then $\overline D\Phi(x)<\fz$ for almost every $x\in\overline\boz$.
\end{lem}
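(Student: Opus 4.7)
The plan is to adapt the classical differentiation-of-measures argument via a Vitali covering. Writing
$$A_k := \{x \in \overline\boz : \overline D \Phi(x) > k\} \qquad (k \in \mathbb N),$$
I first observe that $\{x \in \overline\boz : \overline D \Phi(x) = \fz\} = \bigcap_{k} A_k$, and since $\overline\boz$ is bounded (as $\boz$ is a bounded domain) we have $|A_1| < \fz$, so the Lebesgue measure of this intersection equals $\lim_k |A_k|$. It therefore suffices to show $|A_k| \to 0$ as $k \to \fz$.

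Fix $k$. For each $x \in A_k$, the definition of $\limsup_{r \to 0^+}$ supplies arbitrarily small radii $r \in (0, 1)$ with $\Phi(B(x, r)) > k |B(x, r)|$. The collection of all such balls is a fine cover of $A_k$ with radii uniformly bounded by $1$, so the standard $5r$-covering lemma extracts a countable pairwise disjoint subfamily $\{B(x_i, r_i)\}_{i \in I}$ with
$$A_k \subseteq \bigcup_{i \in I} B(x_i, 5 r_i).$$
Then
$$|A_k| \leq 5^n \sum_i |B(x_i, r_i)| \leq \frac{5^n}{k} \sum_i \Phi(B(x_i, r_i)) = \frac{5^n}{k}\, \Phi\!\lf(\bigcup_i B(x_i, r_i)\r) \leq \frac{5^n \|\Phi\|}{k},$$
where the first inequality uses the $5r$-cover, the second uses the defining inequality $\Phi(B(x_i, r_i)) > k|B(x_i, r_i)|$, the equality uses countable additivity of $\Phi$ on the pairwise disjoint open sets $B(x_i, r_i)$, and the final bound uses boundedness $\|\Phi\| := \sup_U \Phi(U) < \fz$. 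Letting $k \to \fz$ concludes the argument.

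I do not expect any real obstacle here: the only three properties of $\Phi$ invoked are nonnegativity, boundedness, and countable additivity, all of which are hypothesized. The single point of care is that the Vitali covering lemma demands covering balls of uniformly bounded radii, which is why I restrict to $r \in (0, 1)$ at the selection step; since $\limsup_{r \to 0^+}$ only examines arbitrarily small radii, no information is sacrificed. A second minor point is that we do not need a priori measurability of $A_k$, since the estimate above controls its outer Lebesgue measure, and an outer-null set is automatically measurable.
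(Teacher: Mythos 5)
Your proof is correct, and it is worth noting that the paper itself does not prove this lemma at all: it simply cites \cite{Rado, ukhlov2} for it. So your contribution is a self-contained argument, and the route you choose -- the standard $5r$-covering (Vitali) argument combined with superadditivity of $\Phi$ on disjoint balls and the uniform bound $\Phi\le\|\Phi\|$ -- is exactly the technique the paper uses elsewhere, namely in its proof of Lemma \ref{lem:PhiHau}, which is the $\mathcal H^\lambda$-analogue of the same differentiation estimate; in that sense your proof is the natural Lebesgue-measure specialization of an argument already present in the paper, and it only invokes the three hypothesized properties of $\Phi$, as you say. One small remark: your opening appeal to ``the measure of the intersection equals $\lim_k|A_k|$'' via continuity from above presupposes measurability of the sets $A_k$, which you have not established; but this is harmless and in fact superfluous, because, as your closing remark indicates, monotonicity of outer measure alone gives
\begin{equation*}
\bigl|\{x\in\overline\boz:\overline D\Phi(x)=\fz\}\bigr|_{*}\leq |A_k|_{*}\leq \frac{5^n\|\Phi\|}{k}\quad\text{for every }k,
\end{equation*}
so the exceptional set has outer measure zero and is therefore a Lebesgue null set, which is all the lemma asserts.
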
 

\subsection{Gromov hyperbolicity}
For each $1\leq q<n-1$, we will construct a Sobolev $(p, q)$-extension domain whose boundary is of positive volume. In order to establish the extension property of the domain, we will employ an approximation argument. Our domain $\boz$ turns out to be $\delta$-Gromov hyperbolic with respect to the quasihyperbolic metric, which implies that $W^{1,\fz}(\boz)$ is dense in $W^{1,p}(\boz)$.
\begin{defn}\label{de:quasihy}
Let $\boz\subsetneqq\rn$ be a domain. Then the associated quasihyperbolic distance between a pair of points $x, y\in\boz$ is defined as 
$$\dist_{qh}(x, y)=\inf_{\gamma}\int_\gamma\frac{dz}{\dist(z, \partial\boz)},$$
where the infimum is taken over all the rectifiable curves $\gamma\subset\boz$ connecting $x$ and $y$. A curve attaining this infimum is called a quasihyperbolic geodesic between $x$ and $y$. The distance between two sets is also defined in a similar manner.
\end{defn}

The existence of quasihyperbolic geodesics comes from a result by Gehring and Osgood \cite{GandO}. We continue with the definition of Gromov hyperbolicity with respect to the quasihyperbolic metric. 

\begin{defn}\label{de:grohy}
Let $\delta>0$. A domain is called $\delta$-Gromov hyperbolic with respect to the quasihyperbolic metric, if for all $x, y, z\in\boz$ and every corresponding quasihyperbolic geodesic $\gamma_{x, y}$, $\gamma_{y, z}$ and $\gamma_{x,z}$, we have 
$$\dist_{qh}(w, \gamma_{y, z}\cup\gamma_{x,z})\leq\delta,$$
for arbitrary $w\in\gamma_{x, y}$.
\end{defn}

Let us give the definition of quasiconformal mappings.
\begin{defn}\label{de:quasico}
Let $\boz, \boz'$ be domains in $\rn$ and let $1\leq K<\fz$. A homeomorphism $f:\boz\to\boz'$ of the class $W^{1,n}_{\rm loc}(\boz,\rn)$ is said to be a $K$-quasiconformal mapping, if  
$$|Df(x)|^n\leq KJ_f(x),\,\,\text{for almost every}\,\, x\in\boz.$$
Here $|Df(x)|$ means the operator norm of the matrix $Df(x)$ and $J_f(x)$ is its Jacobian determinant.
\end{defn}

The following result was proved in \cite{BHK}.

\begin{lem}\label{lem:qbgh}
Let $\boz\subset\rn$ be a domain which is quasiconformally equivalent to the unit ball. Then $\boz$ is $\delta$-Gromov hyperbolic with respect to the quasihyperbolic metric, where $\delta>0$ depends only on the quasiconformality constant $K$ and $n$.  
\end{lem}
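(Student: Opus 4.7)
The plan is to reduce the statement to two classical ingredients: (i) that the unit ball $\mathbb{B}^n$ is itself Gromov hyperbolic in the quasihyperbolic metric with constants depending only on $n$, and (ii) that $K$-quasiconformal mappings between proper subdomains of $\rn$ are quasi-isometries with respect to the quasihyperbolic metrics, with quasi-isometry constants depending only on $K$ and $n$. Combined with the general fact that Gromov hyperbolicity is preserved (quantitatively) under quasi-isometries of geodesic metric spaces, this yields the claim.

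For ingredient (i), I would observe that on $\mathbb{B}^n$ the quasihyperbolic distance is bi-Lipschitz equivalent, with universal constants, to the classical hyperbolic metric, since $\dist(z,\partial\mathbb{B}^n)=1-|z|$ is comparable to the usual hyperbolic density $(1-|z|^2)^{-1}$ up to a factor of $2$. Therefore the quasihyperbolic space $(\mathbb{B}^n,\dist_{qh})$ is $\delta_0$-Gromov hyperbolic with $\delta_0=\delta_0(n)$, inheriting hyperbolicity from real hyperbolic $n$-space.

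For ingredient (ii), I would invoke the Gehring--Osgood type estimate (as used and extended in \cite{BHK}): if $f:\boz\to\boz'$ is $K$-quasiconformal between two proper subdomains of $\rn$, then $f$ distorts the quasihyperbolic metric in a controlled way, so that there exist constants $L=L(K,n)\geq 1$ and $C=C(K,n)\geq 0$ with
\[
\tfrac{1}{L}\dist_{qh,\boz}(x,y)-C\le \dist_{qh,\boz'}(f(x),f(y))\le L\,\dist_{qh,\boz}(x,y)+C
\]
for all $x,y\in\boz$, and the image $f(\boz)$ is $C$-cobounded in $\boz'$ (trivially, as $f$ is onto). Applying this to a quasiconformal homeomorphism $f:\boz\to\mathbb{B}^n$ makes $f$ an $(L,C)$-quasi-isometry between $(\boz,\dist_{qh})$ and $(\mathbb{B}^n,\dist_{qh})$.

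Finally, I would apply the standard quasi-isometric invariance of Gromov hyperbolicity for geodesic spaces: if $Y$ is $\delta_0$-hyperbolic and $\boz$ is $(L,C)$-quasi-isometric to $Y$ and is itself a geodesic space (which is the case for the quasihyperbolic metric by the Gehring--Osgood existence result cited before Definition~\ref{de:grohy}), then $\boz$ is $\delta$-Gromov hyperbolic with $\delta=\delta(\delta_0,L,C)$. Chaining the three steps gives $\delta=\delta(K,n)$, as required. The main technical obstacle is ingredient (ii): the Gehring--Osgood estimate is not a trivial computation and relies on genuine quasiconformal analysis (modulus estimates of curve families, local bi-Hölder behavior), but it is well documented and applied in this exact form in \cite{BHK}, so the argument reduces to assembling these results.
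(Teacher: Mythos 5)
Your argument is sound, but note that the paper does not contain an internal proof of this lemma at all: it is quoted directly from \cite{BHK}, so the only "comparison" available is with the cited source, and your three-step reconstruction --- (i) the quasihyperbolic metric of $B^n(0,1)$ is bi-Lipschitz (with factor $2$) to the hyperbolic metric, hence Gromov hyperbolic with a universal constant; (ii) a $K$-quasiconformal homeomorphism between proper subdomains of $\rn$ is a rough quasi-isometry for the quasihyperbolic metrics; (iii) quasi-isometry invariance of hyperbolicity for geodesic spaces, quasihyperbolic geodesics existing by \cite{GandO} --- is exactly the standard route underlying the result of \cite{BHK} that the paper invokes. The one step you should state more carefully is (ii): the Gehring--Osgood distortion theorem gives $\dist_{qh,\boz'}(f(x),f(y))\le c\,\max\lf\{\dist_{qh,\boz}(x,y),\ \dist_{qh,\boz}(x,y)^{\alpha}\r\}$ with $\alpha=K^{1/(1-n)}\le 1$, which is H\"older rather than Lipschitz at small scales, so the two-sided $(L,C)$-inequality you assert is not the literal statement of that theorem; it follows only after noting that the sublinear regime contributes at most a bounded amount, which is absorbed into the additive constant $C$, and after applying the estimate also to $f^{-1}$, whose dilatation is controlled by $K$ and $n$. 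With that observation each constant in the chain depends only on $K$ and $n$, and the conclusion follows as you describe; also note that $\boz\neq\rn$ is automatic here, so $\dist_{qh}$ is defined.
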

The following density result comes from \cite{KRZ}.
\begin{lem}\label{lem:density}
If $\boz\subset\rn$ is a bounded domain that is $\delta$-Gromov hyperbolic with respect to the quasihyperbolic metric, then, for every $1\leq p<\fz$, $W^{1, \fz}(\boz)$ is dense in $W^{1,p}(\boz)$.
\end{lem}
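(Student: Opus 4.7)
The argument reduces to a Lipschitz truncation based on a Whitney decomposition, where the $\delta$-Gromov hyperbolicity of $(\boz, d_{qh})$ is used to control the chains arising in Poincar\'e estimates.

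\textbf{Step 1 (Truncation in value).} For $u \in W^{1,p}(\boz)$ and $M > 0$, the pointwise truncation $u_M := \max(-M, \min(u, M))$ lies in $W^{1,p}(\boz)$, is uniformly bounded by $M$, and converges to $u$ in $W^{1,p}(\boz)$ as $M \to \infty$. Hence I may assume $u$ is bounded.

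\textbf{Step 2 (Whitney framework and Lipschitz approximation).} Pick a Whitney decomposition $\{Q_i\}$ of $\boz$ with $\ell(Q_i) \sim \dist(Q_i, \partial\boz)$ and a smooth partition of unity $\{\phi_i\}$ subordinate to slight dilations $Q_i^*$, with $|\nabla \phi_i| \leq C/\ell(Q_i)$ and bounded overlap. For each scale $\epsilon > 0$, fix a basepoint $x_0 \in \boz$ and define surrogate values $c_i^\epsilon$ as follows: if $\ell(Q_i) \geq \epsilon$, take $c_i^\epsilon := u_{Q_i^*}$; otherwise follow the quasihyperbolic geodesic from the centre of $Q_i$ toward $x_0$ until reaching the first cube $Q_{\tau(i)}$ with $\ell(Q_{\tau(i)}) \geq \epsilon$, and take $c_i^\epsilon := u_{Q_{\tau(i)}^*}$. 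Set
\[ u_\epsilon(x) := \sum_i c_i^\epsilon \phi_i(x). \]
By construction $u_\epsilon$ is locally constant on each Whitney cube in the deep collar $\{\dist(\cdot, \partial\boz) < c\epsilon\}$ that shares a $\tau$-value with its neighbours, so the effective scale of variation is $\sim \epsilon$; combined with the partition-of-unity gradient bound this gives $\|\nabla u_\epsilon\|_\infty \leq C\|u\|_\infty / \epsilon$, and hence $u_\epsilon \in W^{1,\infty}(\boz)$.

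\textbf{Step 3 (Convergence in $W^{1,p}$).} Split $\|u - u_\epsilon\|_{W^{1,p}(\boz)}$ into the interior piece ($\ell(Q_i) \geq \epsilon$) and the collar piece ($\ell(Q_i) < \epsilon$). On each interior cube, the Poincar\'e inequality bounds the local contribution by $\int_{Q_i^*}|\nabla u|^p$, and summing via bounded overlap together with dominated convergence shows the interior piece tends to zero as $\epsilon \to 0$. On the collar cubes, telescope the difference $c_i^\epsilon - u(x)$ along the Whitney chain joining $Q_i$ to $Q_{\tau(i)}$ and apply Poincar\'e on each link, producing a bound in terms of $\int_{\{d < C\epsilon\}}|\nabla u|^p$ times a power of the chain length. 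Since $|\{d < C\epsilon\}| \to 0$, this tends to zero provided the chain length stays uniformly bounded. The $\delta$-Gromov hyperbolicity of $(\boz, d_{qh})$ is exactly what furnishes this uniform bound: the Whitney combinatorial graph is quasi-isometric to $(\boz, d_{qh})$, so hyperbolicity transfers, and the stable quasi-geodesic property makes Gromov geodesics used to define $\tau(i)$ fellow-travellers of the natural Whitney chains, with chain length bounded by a constant depending only on $\delta$.

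\textbf{Main obstacle.} The chief difficulty is the telescoping estimate in the collar, which requires the uniform chain-length bound between $Q_i$ and $Q_{\tau(i)}$. This bound fails for general bounded domains, such as rooms-and-corridors domains, where chain-Poincar\'e constants blow up and $W^{1,\infty}$ is not dense in $W^{1,p}$. Making this quantitative via the Whitney combinatorial model of $(\boz, d_{qh})$ and the fellow-traveller property for quasi-geodesics in hyperbolic spaces is the technical heart of the argument.
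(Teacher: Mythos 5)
First, a point of reference: the paper does not prove Lemma \ref{lem:density} at all; it quotes it from \cite{KRZ}, where this density theorem is the main result and its proof takes up most of that paper. So there is no in-paper argument to compare against, and your sketch has to be judged on its own. Doing so, it has two genuine gaps, both located exactly where Gromov hyperbolicity must do real work. The first is the Lipschitz bound in Step 2. Your partition of unity is subordinate to the Whitney cubes, so $|\nabla\phi_j|\sim 1/\ell(Q_j)$, and on a small cube $Q_i$ one only gets $|\nabla u_\epsilon|\lesssim \max_j|c_j^\epsilon-c_i^\epsilon|/\ell(Q_i)$, the maximum running over neighbouring cubes. To conclude $\|\nabla u_\epsilon\|_\infty\lesssim\|u\|_\infty/\epsilon$ you would need adjacent cubes of side much smaller than $\epsilon$ to receive surrogate values agreeing up to an error of order $\ell(Q_i)\|u\|_\infty/\epsilon$, i.e.\ essentially to share the same $\tau$-value deep in the collar. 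The stopping rule you chose (first cube of side at least $\epsilon$ met by the geodesic from the centre of $Q_i$ to $x_0$) does not have this property: two adjacent tiny cubes generically stop at different $\epsilon$-scale cubes, $|c_i^\epsilon-c_j^\epsilon|$ need not decay as $\ell(Q_i)\to0$, and then $u_\epsilon\notin W^{1,\infty}(\boz)$. The clause ``that shares a $\tau$-value with its neighbours'' is precisely the unproved coherence statement; making it true requires either proving that the two stopping cubes lie within quasihyperbolic distance $C(\delta)$ of each other and then gluing at scale $\epsilon$ (e.g.\ via shadows of $\epsilon$-level cubes, as in \cite{KRZ}), not gluing at the Whitney scale. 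This is the heart of the proof, not a by-product of the construction.

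The second gap is in Step 3. The Whitney chain joining $Q_i$ to $Q_{\tau(i)}$ has length comparable to the quasihyperbolic distance between them, which is of order $\log\bigl(\epsilon/\ell(Q_i)\bigr)$ and is unbounded as $\ell(Q_i)\to0$ in every domain, including the unit ball. So the ``uniform chain-length bound'' you attribute to $\delta$-hyperbolicity is not something hyperbolicity can provide; what stability of quasihyperbolic geodesics actually gives is that the $\epsilon$-scale ancestors of neighbouring cubes fellow-travel, i.e.\ are joined by a chain of boundedly many cubes of size comparable to $\epsilon$ --- coherence across neighbours, not short chains down to the ancestor. Consequently the telescoping estimate in the collar must be run over chains of unbounded length, summing over scales and exploiting the geometric decay of cube sizes along the chain (with care about how this interacts with $p$), or else the approximation must be made genuinely locally constant deep in the collar so that no such chains appear in the gradient estimate; this multi-scale bookkeeping is where \cite{KRZ} spends its effort. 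Your diagnosis of the rooms-and-corridors examples also rests on the same misattribution: what fails there is not a bounded chain length from a cube to its ancestor (that is never bounded), but the coherence and chaining estimates above. As it stands, the proposal assumes the two key consequences of hyperbolicity rather than deriving them, so it does not yet constitute a proof.
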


\section{Proof of Theorem \ref{thm:q-fat}}

In this section, we prove Theorem \ref{thm:q-fat}. 
Given $1\le q<n-1,$ Theorem \ref{thm:caF} gives a domain whose boundary is not $q$-fat at points of a subset of positive volume. Since the construction can be easily 
modified so as to also cover the case $q=n-1,$ see Remark \ref{rajatapaus},  we only prove the positive part of Theorem \ref{thm:q-fat}.

\begin{thm}
Let $\Omega\subset\rn$ be a domain and let $n-1<q<\fz$. Then $\Omega$ is $q$-capacitory dense at every point of the boundary. A planar domain $\boz\subset\rr^2$ is also $1$-capacitory dense at every point of the boundary.
\end{thm}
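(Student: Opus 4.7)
The plan is to reduce everything to the Teichm\"uller-type capacity estimate, Lemma \ref{lem2.1}, which is sharp precisely in the range $n-1<q<\fz$ and also at the borderline planar case $q=1$ when $n=2$. For each sufficiently small $r>0$, I would construct connected sets
\[
E_r\subset \boz\cap B(z,r/4)\quad\text{and}\quad F_r\subset \boz\cap A(z;r/2,3r/4)
\]
with $\dist(E_r,F_r)>0$ and $\diam(E_r),\diam(F_r)\gtrsim r$. Lemma \ref{lem2.1} then yields $Cap_q(E_r,F_r;B(z,r))\ge Cr^{n-q}$, and monotonicity of capacity transfers this bound to the numerator in Definition \ref{de:capden}.

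The construction uses only path-connectedness. First I would fix an auxiliary point $p_0\in\boz$. For every $r\in(0,|p_0-z|)$, since $z\in\partial\boz$ there exists $y_r\in\boz$ with $|y_r-z|<r/100$, and since $\boz$ is open and connected one can join $y_r$ to $p_0$ by a path $\gamma_r\subset\boz$. The radial function $\varphi(t):=|\gamma_r(t)-z|$ is continuous and sweeps from below $r/100$ to above $r$. An intermediate-value argument then extracts two sub-arcs: the initial sub-arc of $\gamma_r$ up to the first time $\varphi$ reaches $r/5$, giving a connected set $E_r\subset\overline{B(z,r/5)}\cap\boz$ of diameter at least $r/6$; and a later sub-arc between $s_1:=\sup\{t\le s_2:\varphi(t)=5r/8\}$ and $s_2:=\inf\{t:\varphi(t)=11r/16\}$, on which $\varphi\in(5r/8,11r/16)$, producing a connected set $F_r\subset\overline{A(z;5r/8,11r/16)}\cap\boz\subset A(z;r/2,3r/4)\cap\boz$ of diameter at least $r/16$. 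Note $\dist(E_r,F_r)\ge 5r/8-r/5>0$.

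To conclude, I would compute the denominator by the scaling change of variables $y=z+rw$, which yields
\[
Cap_q(B(z,r/4),A(z;r/2,3r/4);B(z,r))=r^{n-q}\,Cap_q(B(0,1/4),A(0;1/2,3/4);B(0,1)),
\]
and the unit-scale capacity on the right is a strictly positive finite constant depending only on $n$ and $q$. Dividing the numerator lower bound by this then shows that the quotient in Definition \ref{de:capden} is bounded below by a positive constant uniformly in $r\in(0,|p_0-z|)$; in particular, the $\limsup$ as $r\to 0^+$ is strictly positive. Since $z\in\partial\boz$ is arbitrary, $\boz$ is $q$-capacitory dense at every boundary point.

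The hardest (though still minor) point will be the IVT surgery pinning the sub-arc $\gamma_r([s_1,s_2])$ inside the open sub-annulus $A(z;5r/8,11r/16)$: by minimality of $s_2$ one checks that $\varphi<11r/16$ on $[0,s_2)$, and by maximality of $s_1$ combined with continuity one rules out $\varphi\le 5r/8$ on $(s_1,s_2)$, so $\varphi\in(5r/8,11r/16)$ there. Everything else is immediate from Lemma \ref{lem2.1}, monotonicity and scaling of capacity, and openness and connectedness of $\boz$. I also note that the hypothesis on $q$ enters only through Lemma \ref{lem2.1}: for $1\le q\le n-1$ with $n\ge 3$ the Teichm\"uller bound fails, which is precisely why the counterexamples of Theorem \ref{thm:caF} become available in that range.
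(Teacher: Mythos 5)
Your argument is correct and follows essentially the same route as the paper: use openness and connectedness to produce a curve from near the boundary point out past the annulus, extract connected subsets of $\boz\cap B(z,r/4)$ and $\boz\cap A(z;r/2,3r/4)$ with diameters comparable to $r$, apply the Teichm\"uller-type estimate of Lemma \ref{lem2.1} together with monotonicity, and compare with the $r^{n-q}$ behaviour of the reference capacity. The only differences (fixed auxiliary point with explicit IVT surgery instead of per-scale endpoints, and scaling instead of the explicit formula \eqref{equa:cap} for the denominator) are cosmetic.
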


\begin{proof}
Fix $x\in\partial\boz$. Given $0<t<\diam(\boz)/3,$ we may pick points $z\in B(x,t/2)\cap \boz$ and $y\in \boz\setminus B(x,3t).$ Since $\boz$ is open and connected, we find a curve $\gamma$ that joins $z$ to $y$ in $\boz.$ This curve gives us connected sets $E_t\subset \boz\cap B(x, t)$ and
$F_t\subset \boz \cap A(x;2t,3t)$ with 
\[\frac{t}{2}\leq\diam E_t\]
and 
\[\diam(F_{t})\geq t.\] Hence, by Lemma \ref{lem2.1}, for every $n-1<q<\fz$, we have 
\[ Cap_q( \boz\cap B(x,t),\boz\cap  A(x; 2t, 3t); B(x, 4t))\geq Cap_q(E_t, F_t; B(x, 4t))\geq Ct^{n-q}\]
for some positive constant $C$ independent of $x$ and $t$. By (\ref{equa:cap}) 
we conclude that
\begin{equation}
\limsup_{t\to0^+}\frac{Cap_q\lf(\boz\cap B(x, t), \boz\cap A(x; 2t,3t); B(x, 4t)\r)}{Cap_q\lf(B(x, t), A(x; 2t, 3t); B(x,4t)\r)}=\delta_x>0.\nonumber
\end{equation}
Consequently, the domain $\boz$ is $q$-capacitory dense at the point $x\in\partial\boz$.

Finally, let us assume that $n=2$ and $q=1$. Similarly as above, by Lemma \ref{lem2.1}, we have
\[Cap_1\lf(\boz\cap B(x,t), \boz\cap A(x; 2t, 3t); B(x, 4t)\r)\geq Cap_1\lf(E_t, F_t; B(x, 4t)\r)\geq Ct\]
Since
\[Cap_1\lf(B(x, t), A(x; 2t, 3t); B(x, 4t)\r)\sim_c t\]
 for some positive constant $c$ independent of $x$ and $t$, we have
 \begin{equation} 
 \limsup_{t\to 0} \frac{Cap_1(\boz\cap B(x, t),\boz\cap A(x; 2t, 3t); B(x, 4t))}{Cap_1(B(x, t), A(x; 2t, 3t); B(x, 4t))}=\delta_x>0.\nonumber
 \end{equation}
 Consequently, the domain $\boz\subset\rr^2$ is $1$-capacitory dense at the point $x\in\partial\boz$.
\end{proof}

\begin{remark}\label{vahva}
We actually proved that
\begin{equation} \label{vahvempi}
Cap_q( \boz\cap B(x,t),\boz\cap  A(x; 2t, 3t); B(x, 4t))\geq Cap_q(E_t, F_t; B(x, 4t))\geq C_qt^{n-q}
\end{equation}
whenever $x\in \partial \boz,$ $0<t<\frac 1 4 \diam(\boz)$ and $q>n-1$ (also for $q=1$ in the plane).
\end{remark}

\section{Proofs of Corollary \ref{thm:capa} and Theorems \ref{thm:poicade}, \ref{thm:point},  \ref{thm:high} and \ref{thm:strong}}

\begin{proof}[Proof of Theorem \ref{thm:capa}]
Let $\boz\subset\rn$ be a Sobolev $(p, q)$-extension domain with $1\leq q<p<\fz$, and let $E:W^{1,p}(\boz)\to W^{1,q}(\rn)$ be the corresponding homogeneous bounded Sobolev extension operator from Lemma \ref{lem:homo}. Define the associated set function $\Phi$ by (\ref{setfunc11}). Let $x\in\overline{\boz}$ and $0<r<\min\{1, \frac{1}{4}\diam(\boz)\}$ be fixed. Then we define a function $u\in W^{1,p}(\boz)\cap C(\boz)$ by setting
\begin{equation}\label{equa:testu'2}
u(y)= \begin{cases} 1 & \textnormal{in } B(x, \frac{r}{4})\cap\boz, \\
\frac{-4}{r}|y-x|+2 & \textnormal{in }\lf(B(x, \frac{r}{2})\setminus B(x,\frac{r}{4})\r)\cap\boz,\\
 0 & \textnormal{in } \boz\setminus B(x,\frac{r}{2}) \, . \end{cases}
\end{equation}
We have 
\begin{equation}\label{eq:SNu}
\lf(\int_{B(x,r)\cap\boz}|u(y)|^pdy+\int_{B(x,r)\cap\boz}|\nabla u(y)|^pdy\r)^{\frac{1}{p}}\leq \frac{C}{r}|B(x, r)\cap\boz|^{\frac{1}{p}}.
\end{equation}
Because $u\in C(\boz\cap B(x, r))$ with $u\equiv 0$ on $\boz\cap\partial B(x,r)$, we conclude that $u\in W^p_0(B(x, r), \boz)$. By Corollary \ref{cor:sofun1}, we have 
\begin{equation}\label{eq:normin}
\lf(\int_{B(x, r)}|\nabla E(u)(y)|^qdy\r)^{\frac{1}{q}}\leq C(\Phi(B(x, r)))^{\frac{1}{k}}\lf(\int_{B(x, r)\cap\boz}|u(y)|^p+|\nabla u(y)|^pdx\r)^{\frac{1}{p}}
\end{equation}
with $\frac{1}{k}=\frac{1}{q}-\frac{1}{p}$. By (\ref{equa:testu'2}) and the density of continuous functions in $W^{1,q}(B(x, r))$, it is easy to check there exists a sequence in $\mathcal W_q\lf(B\lf(x, \frac{r}{4}\r)\cap\boz, \boz\cap A\lf(x; \frac{r}{2}, \frac{3r}{4}\r); B(x, r)\r)$ that converges to $E(u)$ both almost everywhere and in the Sobolev norm. Hence
\begin{equation}\label{eq:caplow}
\int_{B(x, r)}|\nabla E(u)(x)|^qdx\geq Cap_q\lf(B\lf(x, \frac{r}{4}\r)\cap\boz, \boz\cap A\lf(x; \frac{r}{2}, \frac{3r}{4}\r); B(x, r)\r).
\end{equation}
By combining inequalities (\ref{eq:SNu}), (\ref{eq:normin}) and (\ref{eq:caplow}), we obtain the inequality 
$$C\Phi(B(x, r))^{p-q}|B(x, r)\cap\boz|^q\geq r^{pq}Cap_q\lf(\boz\cap B\lf(x, \frac{r}{4}\r), \boz\cap A\lf(x; \frac{r}{2}, \frac{3r}{4}\r); B(x, r)\r)^p.$$
Our claim follows for the set function $\hat \Phi:=c\Phi,$ where $c=C^{1/(p-q)}.$
\end{proof}

\begin{proof}[Proof of Theorem \ref{thm:point}]
Suppose that $\Omega$ is $q$-fat at almost every $x\in\partial\boz$. By the Lebesgue density theorem and Lemma~\ref{lem:upfinite}, there exists a subset $V\subset\overline\boz$ with $|V|=|\overline\boz|$ such that every $x\in V$ is a Lebesgue point of $\overline\boz$ and $\boz$ is $q$-fat at every $x\in V$. Fix $x\in V$. Let $\epsilon>0$ be sufficiently small such that $1-\epsilon\geq \frac{1}{2^{n-1}}.$ Since $x\in V$ is a Lebesgue point of $\overline\boz$, there exists $0<r_x<1$ such that for every $0<r<r_x$, we have 
\begin{equation}\label{equa:epsilon}
|B(x,r)\cap\overline\boz|\geq (1-\epsilon)|B(x,r)|\geq\frac{1}{2^{n-1}}|B(x, r)|.
\end{equation}
Let $r\in(0, r_x)$ be fixed. Since $|\partial B(x, s)|=0$ for every $0<s<r$, we have
\begin{equation}\label{eq:volume1}
\left|B\lf(x, \frac{r}{4}\r)\cap\overline\boz\r|\geq\frac{1}{2^{n-1}}\lf|B\lf(x, \frac{r}{4}\r)\r|\geq\frac{1}{2^{3n-1}}|B(x, r)|
\end{equation}
and
\begin{equation}\label{eq:volume2}
\lf|\lf(B(x, r)\setminus B\lf(x, \frac{r}{2}\r)\r)\cap\overline\boz\r|\geq \lf|B(x, r)\cap\overline\boz\r|-\lf|B\lf(x, \frac{r}{2}\r)\r|\geq\frac{1}{2^{n}}|B(x, r)|.
\end{equation}

Let $u$ be defined by (\ref{equa:testu'2}). Let $\Phi$ be the set function from (\ref{setfunc11}). 
Then $u\in W^p_0(B(x, r), \boz)$. By \eqref{eq:normin} and \eqref{eq:SNu}, we have $E(u)\in W^{1,q}(B(x,r))$ with
\begin{equation}\label{equa:equa0'1}
\lf(\int_{B(x, r)}|\nabla E(u)(y)|^qdy\r)^{\frac{1}{q}}\leq C\lf(\Phi(B(x, r))\r)^{\frac{1}{\kappa}}
\frac C r |B(x, r)\cap\boz|^{\frac{1}{p}}
\end{equation}
with $\frac{1}{\kappa}=\frac{1}{q}-\frac{1}{p}$. Since $\boz$ is $q$-fat at every $y\in V$, Lemma \ref{lem:u=1} implies that $E(u)(y)=0$ for almost every $y\in (B(x, r)\setminus B(x, \frac{r}{2}))\cap V$ and $E(u)(y)=1$ for almost every $y\in B(x, \frac{r}{4})\cap V$. Since $|V|=|\overline\boz|$, $E(u)(y)=1$ for almost every $y\in B(x, \frac{r}{4})\cap\overline{\boz}$ and $E(u)(y)=0$ for almost every $y\in (B(x, r)\setminus B(x, \frac{r}{2}))\cap\overline\boz$. 

By the Poincar\'e inequality on balls, we have 
\begin{equation}\label{equa:lower1}
Cr^q\int_{B(x, r)}|\nabla E(u)(y)|^qdy\geq \int_{B(x,r)}|E(u)(y)-E(u)_{B(x, r)}|^{q}dy.
\end{equation}
If $E(u)_{B(x, r)}\geq \frac{1}{2}$, since $E(u)(y)=0$ for almost every $y\in (B(x, r)\setminus B(x, \frac{r}{2}))\cap\overline\boz$, we conclude from (\ref{eq:volume2}) that
\begin{multline}
\int_{B(x, r)}|E(u)(y)-E(u)_{B(x, r)}|^{q}dy\geq \lf(\frac{1}{2}\r)^{q}\Big|\lf(B(x,r)\setminus B\lf(x, \frac{r}{2}\r)\r)\cap\overline\boz\Big|
                                               \geq C|B(x,r)|.\nonumber
\end{multline} 
In the case $E(u)_{B(x, r)}<\frac{1}{2}$, since $E(u)(y)=1$ for almost every $y\in B(x, \frac{r}{4})\cap\overline\boz$, we conclude from (\ref{eq:volume1}) that 
$$
\int_{B(x,r)}|E(u)(y)-E(u)_{B(x, r)}|^{q}dy \geq \lf(\frac{1}{2}\r)^{q}\Big|B\lf(x, \frac{r}{4}\r)\cap\overline\boz\Big|\nonumber
                                               \geq C|B(x,r)|.\nonumber
$$
In conclusion, we always have
\begin{equation}\label{equa:equation11}
\int_{B(x,r)}|E(u)(y)-E(u)_{B(x, r)}|^{q}dy\geq C|B(x,r)|.
\end{equation} 
By combining inequalities  (\ref{equa:equa0'1}), (\ref{equa:lower1}) and (\ref{equa:equation11}), we obtain the inequality 
$$
\Phi(B(x, r))^{p-q}|B(x, r)\cap\boz|^q\geq C|B(x, r)|^p.
$$
The desired inequality follows from this by replacing $\Phi$ with $c\Phi$ for a suitable $c.$
\end{proof}

\begin{proof}[Proof of Theorem \ref{thm:high}]
Let us assume that $|\partial\boz|>0$. Then, by the Lebesgue density theorem (see, for example \cite{Stein})
 and Theorem \ref{thm:point}, there exists a subset $V\subset\partial\boz$ with $|V|=|\partial\boz|>0$ such that every point $x\in V$ is a Lebesgue point of $\partial\boz$, $\overline D\Phi(x)<\infty$ and
\begin{equation}\label{equa1:pq}
\Phi(B(x,r))^{p-q}|B(x,r)\cap\boz|^q\geq C|B(x,r)|^p
\end{equation}
holds for every $x\in V$ and each $0<r<r_x.$ Fix $x\in V$. Then by  inequality (\ref{equa1:pq}), we have 
$$
|B(x, r)\cap\partial\boz|\leq |B(x, r)|-|B(x, r)\cap\boz|\leq |B(x, r)|-C\frac{|B(x,r)|^{\frac{p}{q}}}{\Phi(B(x,r))^{\frac{p-q}{q}}},
$$
for every $0<r<r_x$. Hence, by Lemma~\ref{lem:upfinite}, we obtain
\begin{multline}
\nonumber
\limsup\limits_{r\to 0^+}\frac{|B(x, r)\cap\partial\boz|}{|B(x,r)|}\leq \limsup\limits_{r\to 0^+}\frac{|B(x, r)|-|B(x, r)\cap\boz|}{|B(x,r)|}\\
\leq \limsup\limits_{r\to 0^+}\frac{|B(x, r)|}{|B(x,r)|}-C\liminf\limits_{r\to 0^+}\frac{|B(x,r)|^{\frac{p-q}{q}}}{\Phi(B(x,r))^{\frac{p-q}{q}}}
=1-C\overline{D}\Phi(x)^{\frac{q-p}{p}}<1.
\end{multline}

This contradicts the assumption that $x\in V$ is a Lebesgue point of $\partial\boz$. We conclude that $|\partial\boz|=0$. 
\end{proof}

\begin{proof}[Proof of Theorem \ref{thm:strong}]
Let $\boz\subset\rn$ be a Sobolev $(p, q)$-extension domain with $1\leq q\leq p<\fz$. First, if $|\partial\boz|=0$, by Definition \ref{defn:strong}, every bounded extension operator 
$E: W^{1,p}(\boz)\to W^{1,q}(\rn)$ is a strong bounded extension operator. 

Conversely, let us assume that there exists a strong bounded extension operator $E_s:W^{1,p}(\boz)\to W^{1,q}(\rn)$. Fix a function $u\in W^{1,p}(\boz)$ as in (\ref{equa:testu'2}). Since $E_s$ is a strong bounded extension operator, we have $E_s(u)(y)=1$ for almost every $y\in B(x, \frac{r}{4})\cap\overline\boz$ and $E_s(u)(y)=0$ for almost every $y\in (B(x, r)\setminus B(x, \frac{r}{2}))\cap\overline\boz$. Hence, similarly to the proof of Theorem \ref{thm:point}, we obtain the point-wise density inequality (\ref{equa:meade}) for almost every $x\in\overline\boz$. Finally, by making use of Lebesgue density theorem and repeating the proof of Theorem \ref{thm:high}, we conclude that $|\partial\boz|=0$.
\end{proof}

\begin{proof}[Proof of Corollary \ref{thm:poicade}]
The claim follows by combining Proposition \ref{le:stro} with Theorem \ref{thm:point}.
\end{proof}

\section{Proof of Theorem \ref{lem:cacusp}}\label{sc:sharp}

In this section, we prove Theorem \ref{lem:cacusp} that gives the sharp capacity estimate for outward cusp domains. 
After this, we use doubling order outward cusp domains to construct examples towards the sharpness of inequality (\ref{eq:capden}).

\begin{proof}[Proof of Theorem \ref{lem:cacusp}]
For arbitrary $1\leq p<\fz$, we always have
\begin{multline} 
\mathcal W_p\lf(\boz^n_w\cap B\lf(0, \frac{r}{4}\r), A\lf(0; \frac{r}{2}, \frac{3r}{4}\r); B(0, r)\r)\nonumber\\
\subset\mathcal W_p\lf(\boz_w^n\cap B\lf(0, \frac{r}{4}\r), \boz_w^n\cap A\lf(x; \frac{r}{2}, \frac{3r}{4}\r); B(0, r)\r).
\end{multline}
Hence,
\begin{multline}
Cap_p\lf(\boz_w^n\cap B\lf(0, \frac{r}{4}\r), A\lf(0; \frac{r}{2}, \frac{3r}{4}\r); B(0, r)\r)\nonumber\\
\geq Cap_p\lf(\boz_w^n\cap B\lf(0, \frac{r}{4}\r), \boz_w^n\cap A\lf(0; \frac{r}{2}, \frac{3r}{4}\r); B(0, r)\r).
\end{multline}

We divide the argument for the remaining inequalities into three cases.

\textbf{The case $n-1<p<\fz$:} By Lemma \ref{lem2.1}, we have 
\[Cap_p\lf(\boz^n_w\cap B\lf(0,\frac{r}{4}\r),\boz^n_{w}\cap A\lf(0; \frac{r}{2}, \frac{3r}{4}\r); B(0, r)\r)\geq Cr^{n-p}.\] 
We define a test function $v$ on $B(0, r)$ by setting
\begin{equation}\label{eq:test1}
v(z):=\begin{cases}
1 &\textnormal{if }\ |z|<\frac{r}{4}\\
\frac{-4}{r}|z|+2 &\textnormal{if }\ \frac{r}{4}\leq |z|\leq\frac{r}{2}\\
0 &\textnormal{if }\ \frac{r}{2}<|z|<r\, \end{cases}.
\end{equation}
Since $v\in\mathcal W_p\lf(\boz_w^n\cap B\lf(0, \frac{r}{4}\r), A\lf(0; \frac{r}{2}, \frac{3r}{4}\r); B(0, r) \r)$, we have 
\[Cap_p\lf(\boz_w^n\cap B\lf(0,\frac{r}{4}\r), A\lf(0; \frac{r}{2}, \frac{3r}{4}\r); B(0, r)\r)\leq \int_{B(0, r)}|\nabla v(z)|^pdz\leq Cr^{n-p}.\]

\textbf{The case $1\leq p<n-1$:} Given $\frac{r}{5}<\rho<\frac{r}{4}$, we define an $(n-1)$-dimensional sphere $S_\rho$ by 
\[S_\rho:=\lf\{z\in\rn: d\lf(z, \lf(\frac{3r}{8}, 0, \cdots, 0\r)\r)=\rho\r\}.\]
We set
\[S^+_\rho:=\lf\{z=(t, x_1,x_2,\cdots,x_{n-1})\in S_\rho:x_{n-1}>0 \r\}\]
and let $A^+_1(\rho):= S^+_\rho\cap\lf(B\lf(0, \frac{r}{4}\r)\cap\boz^n_{w}\r)$ and $A^+_0(\rho):=S^+_\rho\cap\lf(\boz^n_{w}\setminus B\lf(0, \frac{r}{2}\r)\r)$. Since 
$w$ is doubling and $$\lim_{r\to0^+}w'(r)=0,$$ 
we have
$$\mathcal H^{n-1}(A^+_0(\rho))\sim_c (w(r))^{n-1}\ {\rm and}\ \mathcal H^{n-1}(A^+_1(\rho))\sim_c (w(r))^{n-1}$$
for every $\rho\in(\frac{r}{5}, \frac{r}{4})$. The implicit constants are independent of $r$ and $\rho$. There exists a bi-Lipschitz homeomorphism from $S^+_\rho$ to the $(n-1)$-dimensional disk $B^{n-1}(0, \rho)$ with a bi-Lipschitz constant independent of $\rho$, for example, see \cite[Lemma 2.19]{HandK}. Hence, for each $v\in\mathcal W_p\lf(B\lf(0, \frac{r}{4}\r)\cap\boz^n_{w}, \boz^n_{w}\cap A\lf(0; \frac{r}{2}, \frac{3r}{4}\r); B(0, r) \r)$, by the Sobolev-Poincar\'e inequality on balls \cite[Theorem 4.9]{Evans}, for almost every $\rho\in (\frac{r}{5}, \frac{r}{4})$, we have 
\begin{equation}\label{eq:poinB}
\lf(\bint_{S^+_\rho}|v(z)-v_{S^+_\rho}|^{p^\star} dz\r)^{\frac{1}{p^\star}}\leq Cr\lf(\bint_{S^+_\rho}|\nabla v(z)|^pdz\r)^{\frac{1}{p}}
\end{equation}
with $p^\star=\frac{(n-1)p}{n-1-p}$. Assuming $v_{S^+_\rho}\leq\frac{1}{2}$, we have 
\begin{eqnarray}
(w(r))^{n-1-p}&\leq&C\lf(\int_{A^+_1(\rho)}|v(z)-v_{S^+_\rho}|^{p^\star}dz\r)^{\frac{p}{p^\star}}\nonumber\\
               &\leq&C\lf(\int_{S^+_\rho}|v(z)-v_{S^+_\rho}|^{p^\star}dz\r)^{\frac{p}{p^\star}}\leq C\int_{S^+_\rho}|\nabla v(z)|^pdz.\nonumber
\end{eqnarray}
If $v_{S^+_\rho}>\frac{1}{2}$, we simply replace $A^+_1(\rho)$ by $A^+_0(\rho)$ in the inequality above. Hence, for almost every $\rho\in (\frac{r}{5}, \frac{r}{4})$, we have 
\[(w(r))^{n-1-p}\leq C\int_{S^+_\rho}|\nabla v(z)|^pdz.\]  
By integrating over $\rho\in(\frac{r}{5}, \frac{r}{4})$, we obtain 
\[(w(r))^{n-1-p}r\leq C\int_{\frac{r}{5}}^{\frac{r}{4}}\int_{S^+_\rho}|\nabla v(z)|^pdzd\rho\leq C\int_{\rn}|\nabla v(z)|^pdz.\]
Since $v\in\mathcal W_p\lf(B\lf(0, \frac{r}{4}\r)\cap\boz^n_{w}, \boz^n_{w}\cap A\lf(0; \frac{r}{2}, \frac{3r}{4}\r); B(0, r) \r)$ is arbitrary, we conclude that 
\[
Cap_p\lf(B\lf(0, \frac{r}{4}\r)\cap\boz^n_{w}, \boz^n_{w}\cap A\lf(0; \frac{r}{2}, \frac{3r}{4}\r); B(0, r) \r)
\geq c_2(w(r))^{n-1-p}r.
\]

Towards the other direction of the inequality, we construct a suitable test function. We define a cut-off function $F_1$ by setting
\[F_1(z)=F_1(t, x):=\begin{cases}
1 &\textnormal{if }\ |x|<w(\frac{r}{4})\\
\frac{-|x|}{w(\frac{r}{2})-w(\frac{r}{4})}+\frac{w(\frac{r}{2})}{w(\frac{r}{2})-w(\frac{r}{4})} &\textnormal{if }\ w(\frac{r}{4})\leq |x|\leq w(\frac{r}{2})\\
0 &\textnormal{if } |x|>w(\frac{r}{2})\, \end{cases}.\]
Then we define our test function $v_1\in\mathcal W_p\lf(B\lf(0, \frac{r}{4}\r)\cap\boz^n_{w}, A\lf(0; \frac{r}{2}, \frac{3r}{4}\r); B(0, r) \r)$ by $v_1(z):=v(z)F_1(z),$ where $v$ is defined in \eqref{eq:test1}. 
Since $w'$ is increasing on $(0, \fz)$ and $w$ is doubling, we have 
$$w\lf(\frac{r}{4}\r)\leq w\lf(\frac{r}{2}\r)-w\lf(\frac{r}{4}\r)\leq w\lf(\frac{r}{2}\r)\leq w(r)\leq Cw\lf(\frac{r}{4}\r).$$  
Hence, a simple computation shows that
\[|\nabla v_1(z)|\leq\begin{cases}
\frac{C}{w(r)} &\textnormal{if }\ |t|<\frac{r}{2}\ {\rm and}\ |x|<w(r)\\

0 &\textnormal{otherwise }\, \end{cases}.\]
This implies 
\[Cap_p\lf(B\lf(0, \frac{r}{4}\r)\cap\boz^n_{w}, A\lf(0; \frac{r}{2}, \frac{3r}{4}\r); B(0, r) \r)\leq\int_{B(0, r)}|\nabla v_1(z)|^pdz\leq Cr(w(r))^{n-1-p}.\]

\textbf{The case $p=n-1$:} Let $z_1:=(-\rho, 0, \cdots, 0)$ and $z_2:=(\rho, 0,\cdots, 0)$ be a pair of antipodal points on the $(n-2)$-dimensional sphere $\partial B^{n-1}(0, \rho)$. Denote $\tilde A^+_1(\rho):= B^{n-1}(z_1, w(\rho))\cap B^{n-1}(0, \rho)$ and $\tilde A^+_0(\rho):=B^{n-1}(z_2, w(\rho))\cap B^{n-1}(0, \rho)$.  For every $\rho\in(0, \frac{1}{4})$, there exists a bi-Lipschitz homeomorphism $H_\rho: S^+_\rho\to B^{n-1}(0, \rho)$ with $\tilde A^+_1(\rho)=H_\rho(A^+_1(\rho))$, $\tilde A^+_0(\rho)=H_\rho(A^+_0(\rho))$, with bi-Lipschitz constant independent of $\rho$. Let 
$$\{0\}\times\rr^{n-2}:=\{x=(0, x_2, x_3,\cdots, x_{n-1}):x_i\in\rr\ {\rm for}\ i=2, 3,\cdots, n-1\}.$$
For $z\in \{0\}\times\rr^{n-2}\cap B^{n-1}(0, \rho)$, we define $L_{z_1}^z$ to be the line segment with endpoints $z_1, z$ and $L_{z_2}^z$ to be the line segment with endpoints $z_2, z$. We also define $S_{z_1}^z:=L_{z_1}^z\setminus B^{n-1}(z_1, w(\rho))$ and $S_{z_2}^z:=L_{z_2}^z\setminus B^{n-1}(z_2, w(\rho))$. Fix a test function 
$$\hat v\in\mathcal W_{n-1}\lf(B\lf(0, \frac{r}{4}\r)\cap\boz^n_{w}, \boz^n_{w}\cap A\lf(0; \frac{r}{2}, \frac{3r}{4}\r); B(0, r) \r).$$
 The function $\tilde v_\rho$ defined by $\tilde v_\rho:=\hat v\circ H_\rho^{-1}$, is continuous on $B^{n-1}(0, \rho)$ with $\tilde v_\rho\big|_{\tilde A^+_1(\rho)}\geq 1$ and $\tilde v_\rho\big|_{\tilde A^+_0(\rho)}\leq 0$. By the Fubini theorem, for almost every $\rho\in(\frac{r}{5}, \frac{r}{4})$, $\tilde v_\rho \in W^{1,n-1}(B^{n-1}(0, \rho))\cap C(B^{n-1}(0, \rho))$. Let us fix such a $\rho\in(\frac{r}{5}, \frac{r}{4})$. Then for $\mathcal H^{n-2}$-a.e. $z\in\{0\}\times\rr^{n-2}\cap B^{n-1}(0, \rho)$, by the fundamental theorem of calculus, we have either 
\[\frac{1}{2}\leq\int_{S_{z_1}^z}|\nabla\tilde v_\rho(x)|dx\ \ \ 
{\rm or}\ \ \  
\frac{1}{2}\leq\int_{S_{z_2}^z}|\nabla\tilde v_\rho(x)|dx.\] 
Then the H\"older inequality implies either
\[\lf(\int_{S_{z_1}^z}|x-z_1|^{-1}dx\r)^{2-n}\leq C\int_{S_{z_1}^z}|\nabla\tilde v_\rho(x)|^{n-1}|x-z_1|^{n-2}dx\]
or
\[\lf(\int_{S_{z_2}^z}|x-z_2|^{-1}dx\r)^{2-n}\leq C\int_{S_{z_2}^z}|\nabla\tilde v_\rho(x)|^{n-1}|x-z_2|^{n-2}dx.\]
Hence, we have either
\[\frac{1}{\log^{n-2}\frac{r}{w(r)}}\leq C\int_{B^{n-1}(z_1, \sqrt{2}\rho)\cap B^{n-1}(0, \rho)}|\nabla\tilde v_\rho(x)|^{n-1}dx\]
or
\[\frac{1}{\log^{n-2}\frac{r}{w(r)}}\leq C\int_{B^{n-1}(z_2, \sqrt{2}\rho)\cap B^{n-1}(0, \rho)}|\nabla\tilde v_\rho(x)|^{n-1}dx.\]
In conclusion, for every $\rho\in\lf(\frac{r}{5}, \frac{r}{4}\r)$ with $\tilde v_\rho\in W^{1, n-1}(B^{n-1}(0, \rho))$, we have 
\[\frac{1}{\log^{n-2}\frac{r}{w(r)}}\leq C\int_{B^{n-1}(0, \rho)}|\nabla\tilde v_\rho(x)|^{n-1}dx.\]
Since, for every $\rho\in\lf(\frac{r}{5}, \frac{r}{4}\r)$, $H_\rho: S^+_\rho\to B^{n-1}(0, \rho)$ is bi-Lipschitz with bi-Lipschitz constant independent of $\rho$, we have 
\[\frac{1}{\log^{n-2}\frac{r}{w(r)}}\leq C\int_{S^+_\rho}|\nabla\hat v(z)|^{n-1}dz.\]
By integrating over $\rho\in(\frac{r}{5},\frac{r}{4})$, we obtain 
\[\frac{r}{\log^{n-2}\frac{r}{w(r)}}\leq C\int_{B(0, r)}|\nabla\hat v(z)|^{n-1}dz.\]
Since $\hat v\in\mathcal W_{n-1}\lf(B\lf(0, \frac{r}{4}\r)\cap\boz^n_{w}, \boz^n_{w}\cap A\lf(0; \frac{r}{2}, \frac{3r}{4}\r); B(0, r) \r)$ is arbitrary, we conclude that
\[Cap_{n-1}\lf(\boz_w^n\cap B\lf(0, \frac{r}{4}\r), \boz^n_{w}\cap A\lf(0; \frac{r}{2}, \frac{3r}{4}\r); B(0, r) \r)\geq C\frac{r}{\log^{n-2}\frac{r}{w(r)}}.\]

Towards the opposite direction of this inequality, we construct a suitable test function. We define a cut-off function $F_2$ by setting
\[F_2(z)=F_2(t, x):=\begin{cases}
1 &\textnormal{if }\ |x|<w(\frac{r}{4})\\
\frac{\log\frac{4|x|}{r}}{\log\frac{4w(\frac{r}{4})}{r}} &\textnormal{if }\ w(\frac{r}{4})\leq |x|\leq\frac{r}{4}\\
0 &\textnormal{if } |x|>\frac{r}{4}\, \end{cases}.\]
Then we define our test function $v_2\in\mathcal W_{n-1}\lf(B\lf(0, \frac{r}{4}\r)\cap\boz^n_{u}, A\lf(0; \frac{r}{2}, \frac{3r}{4}\r); B(0, r) \r)$ by 
\[v_2(z):=\begin{cases}
F_2(z) &\textnormal{if }\ |z|<\frac{r}{4}\\
F_2(z)\frac{\log\frac{2|z|}{r}}{\log\frac{1}{2}} &\textnormal{if }\ \frac{r}{4}\leq |z|\leq\frac{r}{2}\\
0 &\textnormal{if } |z|>\frac{r}{2}\, \end{cases}.\] 
Since $w$ is doubling, a simple computation shows that
\[|\nabla v_2(z)|\leq\begin{cases}
\frac{C}{|x|\log\frac{r}{w(r)}} &\textnormal{if }\ |t|<\frac{r}{2}\ {\rm and}\ w(\frac{r}{4})<|x|<\frac{r}{4}\\

0 &\textnormal{elsewhere }\, \end{cases}.\]
Hence, 
\[Cap_{n-1}\lf(B\lf(0, \frac{r}{4}\r)\cap\boz^n_{w}, \boz^n_{w}\cap A\lf(0; \frac{r}{2}, \frac{3r}{4}\r); B(0, r) \r)\leq\int_{\rn}|\nabla v_2(z)|^{n-1}dz\leq \frac{Cr}{\log^{n-2}\frac{r}{w(r)}}.\]

By combining the three cases above, we obtain the missing inequalities.
\end{proof}

We proceed to show the sharpness of the inequality (\ref{eq:capden}). We need the following lemma.

\begin{lem}\label{lem:PhiHau}
Let $0\leq \lambda<n$ and $\Phi$ be a non-negative, bounded, monotone and countably additive  
set function defined on open sets. Define
$$
E=\lf\{x\in\overline\boz:\limsup_{r\to0}\frac{\Phi(B(x, r))}{r^\lambda}=\infty\r\}.
$$
Then
$$
\mathcal H^\lambda(E)=0.
$$
\end{lem}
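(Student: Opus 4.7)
The plan is to treat $\Phi$ as a finite, monotone, countably additive outer measure on open subsets of $\rn$ and then run the classical Frostman-type density argument: a set where the upper $\lambda$-density of a finite measure is infinite must be $\mathcal{H}^\lambda$-null. The only mild technicality is that $\Phi$ is defined only on open sets, but countable additivity on pairwise disjoint open sets combined with boundedness turns out to be a perfect substitute for being a finite Borel measure.

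First I would reduce to a truncation. For each integer $M\geq 1$ set
$$
E_M := \Big\{x\in\overline{\boz} : \limsup_{r\to 0^+}\frac{\Phi(B(x,r))}{r^\lambda}>M\Big\},
$$
so that $E=\bigcap_{M\geq 1}E_M$. It therefore suffices to prove that $\mathcal{H}^\lambda(E_M)\leq C(\lambda)\|\Phi\|/M$, where $\|\Phi\|:=\sup_U\Phi(U)<\infty$ by boundedness. Next, for fixed $M$ and $\delta>0$, at each $x\in E_M$ I pick a radius $r_x\in(0,\delta/10)$ satisfying $\Phi(B(x,r_x))>M r_x^\lambda$, and apply the standard $5r$-covering lemma to the family $\{B(x,r_x)\}_{x\in E_M}$ to extract a countable pairwise disjoint subfamily $\{B(x_i,r_i)\}$ whose fivefold enlargements still cover $E_M$.

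The key step is then the estimate
\begin{equation*}
\mathcal{H}^\lambda_\delta(E_M)\;\leq\;\sum_i (10 r_i)^\lambda\;\leq\;\frac{10^\lambda}{M}\sum_i \Phi(B(x_i,r_i))\;=\;\frac{10^\lambda}{M}\,\Phi\Big(\bigcup_i B(x_i,r_i)\Big)\;\leq\;\frac{10^\lambda}{M}\|\Phi\|,
\end{equation*}
where the second inequality uses the defining property of $r_x$ and the equality uses countable additivity of $\Phi$ on the disjoint open union. Passing to $\delta\to 0^+$ yields $\mathcal{H}^\lambda(E_M)\leq 10^\lambda\|\Phi\|/M$, and then $M\to\infty$ concludes the proof.

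There is no deep obstacle here; the argument is routine once one notices that countable additivity on pairwise disjoint open sets plus boundedness plays exactly the role of being a finite Borel measure in the classical density lemma. The only minor bookkeeping is to ensure that $r_x<\delta/10$ so the enlarged balls are $\delta$-admissible for $\mathcal{H}^\lambda_\delta$, and that the disjoint union $\bigcup_i B(x_i,r_i)$ is open (it is, as a countable union of open balls) so the additivity hypothesis applies.
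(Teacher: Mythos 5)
Your argument is correct and is essentially the paper's own proof: the same Frostman-type density argument via the $5r$-covering lemma, using countable additivity of $\Phi$ on the disjoint open balls together with boundedness, and then letting the fineness parameter go to zero. The only cosmetic difference is that you split the truncation into a separate parameter $M$, whereas the paper folds it into the single parameter $\delta$ by choosing $r_x<\delta$ with $\delta\,\Phi(B(x,r_x))>r_x^\lambda$.
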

\begin{proof}
For each $x\in E$ and every $\delta>0$, there exists $0<r_x<\delta$ such that
$$
\delta \Phi(B(x, r_x))>r_x^\lambda.
$$
Define 
$$
\mathcal F:=\lf\{B(x, r_x):x\in E\r\}.
$$
By the classical Vitali covering theorem,  there exists an at most countable subclass of pairwise disjoint balls $\{B_i\}_{i=1}^{\fz}$ in $\mathcal F$ such that 
$$
E\subset \bigcup_{i=1}^\fz5B_i.
$$
Hence, writing $r_i$ for the radius of $B_i$, we have
\begin{eqnarray}
\mathcal H^\lambda_{10\delta}(E)&\leq& C\sum_{i=1}^\fz(5r_i)^\lambda\leq C\delta\sum_{i=1}^\fz\Phi(B_i)\nonumber\\
                                                                              &\leq&C\delta\Phi\lf(\bigcup_{i=1}^\fz B_i\r)\leq C\delta\Phi(\rn).\nonumber
\end{eqnarray}
The claim follows by letting $\delta$ tend to zero.
\end{proof}

{\begin{proof}[Sharpness of (\ref{eq:capden})]
We use outward cusp domains  to construct Sobolev  extension domains that show the sharpness of (\ref{eq:capden}).  Given $s\in (1, \fz)$ and $\alpha>\frac {s-1}{s},$ let $\omega(t)=t^s\log^{\alpha}(\frac e t),$ and 
consider the outward cusp domain 
$\boz^n_{t^s\log^{\alpha}(\frac{e}{t})}:=\boz_\omega^n\subset\rn.$ By results due to Maz'ya and Poborchi in \cite{Mazya1, Mazya2, Mazya, Mazya3}, we have the following results. For $n\geq 3$, 
$\boz^n_{t^s\log^\alpha(\frac{e}{t})}$ is a Sobolev $(p, q)$-extension domain for
\begin{equation}\label{equa:pq0}
\begin{cases} 1\leq q\leq\frac{(1+(n-1)s)p}{1+(n-1)s+(s-1)p} & \textnormal{if } \frac{1+(n-1)s}{2+(n-2)s}\leq p\leq\frac{(n-1)+(n-1)^2s}{n}, \\
 1\leq q\leq\frac{np}{1+(n-1)s} & \textnormal{if } \frac{(n-1)+(n-1)^2s}{n}\leq p<\fz \, . \end{cases}
\end{equation}
For $n=2$, $\boz^n_{t^s\log^\alpha(\frac{e}{t})}$ is a Sobolev $(p, q)$-extension domain for $\frac{1+s}{2}\le p<\fz$ and $1\leq q\le \frac{2p}{1+s}$. 

Clearly, there exists a constant $C>1$ such that for every $0<r<1$, we have 
\begin{equation}\label{eq:EQU1'}
\frac{1}{C}r^{1+(n-1)s}\log^{\alpha(n-1)}\lf(\frac{e}{r}\r)\leq|B(0,r)\cap\boz^n_{t^s\log^\alpha(\frac e t)}|\leq Cr^{1+(n-1)s}\log^{\alpha(n-1)}\lf(\frac{e}{r}\r).
\end{equation}
Furthermore, \eqref{eq:cacuspH} gives us a lower bound for the capacitory term in \eqref{eq:capden}
in terms of $r,q,s,n$ and $\log^\alpha\lf(\frac{e}{r}\r).$

By comparing the capacity estimate, (\ref{eq:EQU1'}) and  (\ref{eq:cacuspH})  for the values of $q,p$ given by \eqref{equa:pq0} we see that \eqref{eq:capden} cannot hold for a bounded set function $\Phi$ for better
exponents than the given ones.



Let us also analyze the additivity of $\Phi.$ Fix $n\ge 3.$
Let $k\in\{1, 2,\cdots, n-2\},$ $s\in(1, \fz)$ and $\alpha>\frac{s-1}{k+1}$ be fixed. We define a domain $G^k_n(s,\alpha)\subset\rn$ by setting 
$$
G^k_n(s,\alpha):=\boz^{k+1}_{t^s\log^\alpha(\frac{e}{t})}\times\rr^{n-k-1}.
$$
Since $G^k_n(s,\alpha)$ is the product of $\boz^{k+1}_{t^s\log^{\alpha}(\frac{e}{t})}$ and $\rr^{n-k-1}$, by the extension results in \cite{Mazya1, Mazya2, Mazya, Mazya3} and product results in \cite{KZiumj, Zhu}, we obtain the following conclusions. For $k\geq 2$, $G^k_n(s,\alpha)$ is a Sobolev $(p, q)$-extension domain for 
\begin{equation}\label{equa:pq}
\begin{cases} 1\leq q\leq\frac{(1+ks)p}{1+ks+(s-1)p} & \textnormal{if } \frac{1+ks}{2+(k-1)s}\leq p\leq\frac{k +k^2s}{k+1}, \\
 1\leq q\leq\frac{(k+1)p}{1+ks} & \textnormal{if } \frac{k+k^2s}{k+1}\leq p<\fz \, ,\end{cases}
\end{equation}
and $G^1_n(s,\alpha)$ is a Sobolev $(p, q)$-extension domain for $\frac{1+s}{2}\leq p<\fz$ and $1\leq q\leq\frac{2p}{1+s}$.

Clearly, there exists a constant $C>1$ such that, for every $x\in\{0\}\times\rr^{n-k-1}$ and each $0<r<1$, we have 
\begin{equation}\label{eq:EQU1}
\frac{1}{C}r^{n+ks-k}\log^{\alpha k}\lf(\frac{e}{r}\r)\leq|B(x, r)\cap G^k_n(s,\alpha)|\leq C r^{n+ks-k}\log^{\alpha k}\lf(\frac{e}{r}\r).
\end{equation}
Moreover, Fubini theorem, Theorem \ref{lem:cacusp} and Lemma \ref{lem2.1} give with some work the estimates
\begin{multline}\label{eq:EQU2}
Cap_q\lf(G^k_n(s,\alpha)\cap B\lf(x, \frac{r}{4}\r), G^k_n(s,\alpha)\cap A\lf(x; \frac{r}{2}, \frac{3r}{4}\r); B(x, r)\r)\\
\geq\begin{cases}
c_1 r^{n-q} &\textnormal{if }\ k<q<\fz\\
\frac{c_2r^{n-k}}{\log^{k}\frac{e}{r}} &\textnormal{if }\ q=k\\
c_3r^{(k-q)s+n-k}\log^{\alpha(k-q)}\lf(\frac{e}{r}\r) &\textnormal{if }\ 1\leq q<k\, \end{cases}
\end{multline}
and, for $k=1$,  
\begin{equation}\label{eq:EQU2'}
Cap_q\lf(G^1_n(s,\alpha)\cap B\lf(x, \frac{r}{4}\r), G^1_n(s,\alpha)\cap A\lf(x; \frac{r}{2}, \frac{3r}{4}\r); B(x, r)\r)\geq cr^{n-q}.
\end{equation}

By Lemma \ref{lem:PhiHau}, for $\mathcal H^{n-k-1}$-almost every $x\in\{0\}\times\rr^{n-k-1}$, there exists $M_x<\infty$ with
 \begin{equation}\label{eq:EQU3}
\Phi(B(x, r))\leq M_xr^{n-k-1}.
\end{equation}

If $k\geq 2$, by inserting (\ref{equa:pq}), (\ref{eq:EQU2}) and (\ref{eq:EQU3}) into the inequality (\ref{eq:capden}),  we obtain the optimal bound in \eqref{eq:EQU1}, modulo logarithmic terms.
The case $k=1$ is analogous.

In conclusion, there is no hope in improving on the boundedness of the set function $\Phi$ from \eqref{eq:capden} so as to
obtain estimates that would hold at every boundary point. Moreover, the additivity of $\Phi$ gives
rather optimal measure density properties for points outside exceptional sets.


\end{proof}}

{
\section{Proof of Proposition \ref{le:stro}}
\begin{proof}[Proof of Proposition \ref{le:stro}]

Assuming that $\boz$ is $p$-capacitory dense at the point $z$ for $1\leq p<\fz$, there exists a positive constant $\delta_z>0$ and a decreasing positive sequence $\{r_i\}_{i=1}^{\fz}$, which converges to $0$, such that 
\begin{equation}\label{eq:capin0}
\frac{Cap_p\lf(\boz\cap B\lf(z,\frac{r_i}{4}\r), \boz\cap A\lf(z; \frac{r_i}{2}, \frac{3r_i}{4}\r); B\lf(z, r_i\r)\r)}{Cap_p\lf(B\lf(z, \frac{r_i}{4}\r), A\lf(z; \frac{r_i}{2}, \frac{3r_i}{4}\r); B(z, r_i)\r)}>\delta_z
\end{equation}
for every $r_i$. 

Let us first consider the case $p=1$. Since 
\begin{multline}
\mathcal W_1\lf(\boz\cap B\lf(z, \frac{r_i}{4}\r),A\lf(z; \frac{r_i}{2}, \frac{3r_i}{4}\r); B(z, r_i)\r)\nonumber\\
\subset\mathcal W_1\lf(\boz\cap B\lf(z, \frac{r_i}{4}\r),\boz\cap A\lf(z; \frac{r_i}{2}, \frac{3r_i}{4}\r); B(z, r_i)\r),
\end{multline}
we have 
\begin{multline}
Cap_1\lf(\boz\cap B\lf(z, \frac{r_i}{4}\r), A\lf(z; \frac{r_i}{2}, \frac{3r_i}{4}\r); B(z, r_i)\r)\nonumber\\
\geq Cap_1\lf(\boz\cap B\lf(z, \frac{r_i}{4}\r), \boz\cap A\lf(z;\frac{r_i}{2}, \frac{3r_i}{4}\r); B(z, r_i)\r).
\end{multline}
By \cite[Proposition 6.4]{GResh} we have that 
\[Cap_1\lf(B\lf(z, \frac{r_i}{4}\r), A\lf(z; \frac{r_i}{2}, \frac{3r_i}{4}\r); B(z, r_i)\r)\sim_c r_i^{n-1}\]
with an implicit constant independent of $r_i$. Hence we have 
\begin{equation}
\frac{r_iCap_1\lf(\boz\cap B\lf(z,\frac{r_i}{4}\r), A\lf(z; \frac{r_i}{2}, \frac{3r_i}{4}\r); B\lf(z, r_i\r)\r)}{\mathcal H^n(B(z, r_i))}>\tilde\delta_z>0.\nonumber
\end{equation}
This implies that $\boz$ is $1$-fat at $z$.

Let now $1<p<\fz$. Without loss of generality, we may choose a sequence $\{r_i\}_{i=1}^\fz$ with $16r_{i+1}<r_{i}$ for every $i\in\mathbb N$ such that (\ref{eq:capin0}) holds. By (\ref{equa:cap}), we have 
\begin{equation}\label{eq:capin}
Cap_{p}\lf(B\lf(z, \rho\r), A\lf(z; 2\rho, 3\rho\r); B(z, 4\rho)\r)\sim_cCap_p\lf(B\lf(z, \frac{r_i}{4}\r), A\lf(z; \frac{r_i}{2}, \frac{3r_i}{4}\r); B(z, r_i)\r)\end{equation}
for every $\rho\in(\frac{r_i}{4}, \frac{r_i}{2})$ with a constant $c$ independent of $\rho$ and $r_i$. Since $\rho\in(\frac{r_i}{4},\frac{r_i}{2})$, 
\[\mathcal W_p\lf(\boz\cap B\lf(z, \rho\r), A\lf(z; 2\rho, 3\rho\r); B(z, 4\rho)\r)\subset\mathcal W_p\lf(\boz\cap B\lf(z,\frac{r_i}{4}\r), A\lf(z; 2\rho, 3\rho\r); B(z, 4\rho)\r).\]
Hence, we have
\begin{multline}\label{eq:capin1}
Cap_p\lf(\boz\cap B\lf(z, \frac{r_i}{4}\r), A\lf(z; 2\rho, 3\rho\r); B(z, 4\rho)\r)\\ \leq Cap_p\lf(\boz\cap B(z,\rho), A\lf(z; 2\rho, 3\rho\r); B(z, 4\rho)\r).
\end{multline}
Let $u\in \mathcal W_p\lf(\boz\cap B\lf(z, \frac{r_i}{4}\r), A\lf(z; 2\rho, 3\rho\r); B(z, 4\rho)\r)$ be arbitrary. Then we define a function $\tilde u\in\mathcal W_p\lf(\boz\cap B\lf(z, \frac{r_i}{4}\r), \boz\cap A\lf(z; \frac{r_i}{2}, \frac{3r_i}{4}\r); B(z, r_i)\r)$ by setting 
$$
\tilde u(x):=\begin{cases}
u(x) &\textnormal{if }\ x\in B(z, \frac{r_i}{4})\\
u\lf((x-z)\frac{8\rho-r_i}{r_i}+\lf(\frac{r_i}{2}-2\rho\r)\frac{x-z}{|x-z|}+z\r) &\textnormal{if }\ x\in B\lf(z, \frac{r_i}{2}\r)\setminus B\lf(z, \frac{r_i}{4}\r)\\
u\lf(\frac{4\rho}{r_i}(x-z)+z\r) &\textnormal{if }\ x\in B(z, r_i)\setminus B\lf(z, \frac{r_i}{2}\r)\, \end{cases}.
$$
By the fact that $\frac{r_i}{4}\leq \rho\leq\frac{r_i}{2}$, we have 
\begin{equation}
\int_{B(z, r_i)}|\nabla\tilde u(x)|^pdx\leq C\int_{B(z, 4\rho)}|\nabla u(x)|^pdx\nonumber
\end{equation}
with a constant $C$ independent of $z$, $\boz$ and $\rho\in(\frac{r_i}{4}, \frac{r_i}{2})$. Since the test function $u$ was arbitrary, we have 
\begin{multline}\label{eq:capin2}
Cap_{p}\lf(\boz\cap B\lf(z, \frac{r_i}{4}\r), \boz\cap A\lf(z; \frac{r_i}{2}, \frac{3r_i}{4}\r); B(z, r_i)\r)\\
\leq CCap_p\lf(\boz\cap B\lf(z, \frac{r_i}{4}\r), A\lf(z; 2\rho, 3\rho\r); B(z, 4\rho)\r)
\end{multline}
with an absolute positive constant $C$ independent of $\rho\in(\frac{r_i}{4}, \frac{r_i}{2})$.
By combining inequalities (\ref{eq:capin1}) and (\ref{eq:capin2}), we obtain 
\begin{multline}\label{eq:capin3}
Cap_{p}\lf(\boz\cap B\lf(z, \frac{r_i}{4}\r), \boz\cap A\lf(z; \frac{r_i}{2}, \frac{3r_i}{4}\r); B(z, r_i)\r)\\
\leq CCap_p\lf(\boz\cap B\lf(z, \rho\r), A\lf(z; 2\rho, 3\rho\r); B(z, 4\rho)\r)
\end{multline}
with a positive constant $C$ independent of $\rho\in(\frac{r_i}{4}, \frac{r_i}{2})$. Finally, by combining inequalities (\ref{eq:capin0}), (\ref{eq:capin}) and (\ref{eq:capin3}), we obtain 
\[\frac{Cap_p\lf(\boz\cap B(z, \rho), A\lf(z; 2\rho, 3\rho\r); B(z, 4\rho)\r)}{Cap_p\lf(B(z, \rho), A\lf(z; 2\rho, 3\rho\r); B(z, 4\rho)\r)}>\tilde\delta_z\]
where $\tilde\delta_z>0$ is a positive constant independent of $\rho\in (\frac{r_i}{4}, \frac{r_i}{2})$. Since $16r_{i+1}<r_i$ for every $i\in\mathbb N$, we have
\begin{multline}
\int_0^1\lf(\frac{Cap_p\lf(\boz\cap B(z, \rho), A\lf(z; 2\rho, 3\rho\r); B(z, 4\rho)\r)}{Cap_p\lf(B(z, \rho), A\lf(z; 2\rho, 3\rho\r); B(z, 4\rho)\r)}\r)^{\frac{1}{p-1}}\frac{d\rho}{\rho}\nonumber\\
\geq\sum_{i=1}^\fz\int_{\frac{r_i}{4}}^{\frac{r_i}{2}}\lf(\frac{Cap_p\lf(\boz\cap B(z, \rho), A\lf(z; 2\rho, 3\rho\r); B(z, 4\rho)\r)}{Cap_p\lf(B(z, \rho), A\lf(z; 2\rho, 3\rho\r); B(z, 4\rho)\r)}\r)^{\frac{1}{p-1}}\frac{d\rho}{\rho}\\
\geq\sum_{i=1}^\fz\frac{(\tilde\delta_z)^{\frac{1}{p-1}}}{2}=\fz.
\end{multline}
Hence, $\boz$ is $p$-fat at the point $z$.

Next, for $1<p\leq n-1$, we construct outward cusp domains $\boz^n_w\subset\rn$ with suitable functions $w$, such that $\boz^n_w$ are $p$-fat but not $p$-capacitory dense at the tip $0$. 

Fix $1<p<n-1.$  We consider the function $w(t)=\frac{t}{\log^{\frac{p-1}{n-1-p}}\frac{e}{t}}$ and the corresponding outward cusp domain $\boz^n_w$. By Theorem \ref{lem:cacusp}, we have 
\begin{multline}
Cap_p\lf(\boz^n_w\cap B\lf(0,\frac{r}{4}\r),  A\lf(0; \frac{r}{2}, \frac{3r}{4}\r); B(0, r)\r)\sim_c\nonumber\\
Cap_p\lf(\boz^n_w\cap B\lf(0, \frac{r}{4}\r), \boz^n_w\cap A\lf(0; \frac{r}{2}, \frac{3r}{4}\r); B(0, r)\r)\sim_c\frac{r^{n-p}}{\log^{p-1}\frac{e}{r}}.
\end{multline}
Hence, by (\ref{equa:cap}), we have 
\[\lim_{r\to0^+}\frac{Cap_p\lf(\boz^n_w\cap B\lf(0, \frac{r}{4}\r), \boz^n_w\cap A\lf(0; \frac{r}{2}, \frac{3r}{4}\r); B(0, r)\r)}{Cap_p\lf(B\lf(0, \frac{r}{4}\r), A\lf(0; \frac{r}{2}, \frac{3r}{4}\r); B(0, r)\r)}\sim_c\lim_{r\to0^+}\frac{1}{\log^{p-1}\frac{e}{r}}=0\]
and 
\[\int_0^1\lf(\frac{Cap_p\lf(\boz^n_w\cap B\lf(0, \frac{r}{4}\r),  A\lf(0; \frac{r}{2}, \frac{3r}{4}\r); B(0, r)\r)}{Cap_p\lf(B\lf(0, \frac{r}{4}\r),  A\lf(0; \frac{r}{2}, \frac{3r}{4}\r); B(0, r)\r)}\r)^{\frac{1}{p-1}}\sim_c\int_{0}^1\frac{1}{r\log\frac{e}{r}}dr=\fz.\]
Hence, the outward cusp domain $\boz^n_w$ is not $p$-capacitory dense but nevertheless $p$-fat at the tip $0$.

For $p=n-1$, we choose the function $w(t)=t^2$. By Theorem \ref{lem:cacusp}, for every $0<r<\frac{1}{2}$, we have 
\begin{multline}
Cap_{n-1}\lf(\boz^n_w\cap B\lf(0,\frac{r}{4}\r),  A\lf(0; \frac{r}{2}, \frac{3r}{4}\r); B(0, r)\r)\sim_c\nonumber\\
Cap_{n-1}\lf(\boz^n_w\cap B\lf(0, \frac{r}{4}\r), \boz^n_w\cap A\lf(0; \frac{r}{2}, \frac{3r}{4}\r); B(0, r)\r)\sim_c\frac{r}{\log^{n-2}\frac{e}{r}}.
\end{multline}
Hence, we have 
\[\lim_{r\to0^+}\frac{Cap_{n-1}\lf(\boz^n_w\cap B\lf(0, \frac{r}{4}\r), \boz^n_w\cap A\lf(0; \frac{r}{2}, \frac{3r}{4}\r); B(0, r)\r)}{Cap_{n-1}\lf(B\lf(0, \frac{r}{4}\r), A\lf(0; \frac{r}{2}, \frac{3r}{4}\r); B(0, r)\r)}\sim_c\lim_{r\to0^+}\frac{1}{\log^{n-2}\frac{e}{r}}=0\]
and 
\[\int_0^{\frac{1}{2}}\lf(\frac{Cap_{n-1}\lf(\boz^n_w\cap B\lf(0, \frac{r}{4}\r),  A\lf(0; \frac{r}{2}, \frac{3r}{4}\r); B(0, r)\r)}{Cap_{n-1}\lf(B\lf(0, \frac{r}{4}\r),  A\lf(0; \frac{r}{2}, \frac{3r}{4}\r); B(0, r)\r)}\r)^{\frac{1}{n-2}}\frac{dr}{r}\sim_c\int_{0}^{\frac{1}{2}}\frac{1}{r\log\frac{e}{r}}dr=\fz.\]
Consequently, the outward cusp domain $\boz^n_w$ is not $(n-1)$-capacitory dense but nevertheless $(n-1)$-fat at the tip $0$. 
\end{proof} 

}

\section{Proofs of Theorem \ref{thm:positive} and Theorem \ref{thm:caF}}

In this section, for every $n\geq 3$ and $1\leq q<n-1$, we construct a Sobolev $(p, q)$-extension domain $\boz\subset\rn$ with $|\partial\boz|>0$. We also use this construction to prove Theorem \ref{thm:caF}.

\subsection{The initial construction}

Let $\mathcal Q_o:=(0,1)^n$ be the $n$-dimensional unit cube in $\rr^n$, and $\mathcal C_o:=(0,1)^{n-1}\times(0,2)$ be an $n$-dimensional rectangle in $\rr^{n}$. Let $\mathcal S_o:=(0, 1)^{n-1}$ be the $(n-1)$-dimensional unit cube in the $(n-1)$-dimensional Euclidean hyperplane $\rr^{n-1}$. Let $E\subset [0, 1]$ be a Cantor set with $0<\mathcal H^1(E)<1$. The Smith-Volterra-Cantor set guarantees the existence of such an $E$, see \cite{Smith}. Define 
$$E^{n-1}:=\underbrace{E\times E\times\cdots\times E}_{n-1}.$$
Then $E^{n-1}\subset[0, 1]^{n-1}$ is nowhere dense in $(0, 1)^{n-1}$ with $0<\mathcal H^{n-1}(E^{n-1})<1$. We let
$$W:=\lf\{Q\subset(0, 1)^{n-1}\setminus E^{n-1}: Q\ {\rm is\ Whitney}\r\}$$
 be the class of all Whitney cubes of the open set $(0, 1)^{n-1}\setminus E^{n-1}$, see \cite{Stein1}. For every $k\in\mathbb N$, we define $W_k$ to be the subclass of $W$ with
$$W_k:=\lf\{Q\subset W: 2^{-k-1}\leq l(Q)<2^{-k}\r\}$$
where $l(Q)$ is the edge-length of the cube $Q$. We number the elements in $W_k$ by 
$$W_k=\{ Q_k^{j}:1\le j\le N_k \}.$$
Notice that $N_k\le 2^{(n-1)(k+1)}$. For a Whitney cube $Q_k^{j}$, we refer to its center by $x_k^{j}.$  Let $h:[0,1]\to[0,1]$ be an increasing and continuous function with $h(0)=0$ and $h(t)>0$ when $t>0.$ We define 
\begin{equation}\label{rkoot}
r_k:=\lf(2^{-(n-1)(k+1)-k}h(8^{-k})\r)^{\frac{1}{n-1-q}},
\end{equation}
$\mathsf D_k^{j}:=B^{n-1}(x_k^{j}, r_k)$ and $\widetilde{\mathsf D}_k^{j}:=B^{n-1}(x_k^{j}, \frac{r_k}{2})$. Then $\widetilde{\mathsf D}_k^{j}\subset\mathsf D_k^{j}\subset Q_k^{j}$. 
Since $E^{n-1}$ is nowhere dense in $[0, 1]^{n-1}$, for an arbitrary $x\in E^{n-1}$ and each $\epsilon>0$, there exists a large enough $k$ and some $j\in\{1,2,\cdots, N_k\}$ with $Q_k^{j}\subset(0, 1)^{n-1}\cap B^{n-1}(x, \epsilon)$. Then $\widetilde{\mathsf D}_k^{j}\subset(0, 1)^{n-1}\cap B^{n-1}(x, \epsilon)$. Hence, we have 
  $$E^{n-1}\subset \overline{\bigcup_{k=1}^{\fz}\bigcup_{j}\widetilde{\mathsf D}_k^{j}}.$$ 
We define 
$$\mathcal D_h:=\bigcup_{k=1}^{\fz}\bigcup_{j}\mathsf D_k^{j} \ {\rm and}\ \widetilde{\mathcal D}_h:=\bigcup_{k=1}^\fz\bigcup_{j}\widetilde{\mathsf D}_k^{j}.$$
Then $E^{n-1}\subset\partial{\widetilde{\mathcal{D}}_h}$ and $\mathcal H^{n-1}(\partial\widetilde{\mathcal D}_h)\geq\mathcal H^{n-1}(E^{n-1})>0$.
\begin{figure}[htbp]
\centering
\includegraphics[width=0.5\textwidth]
{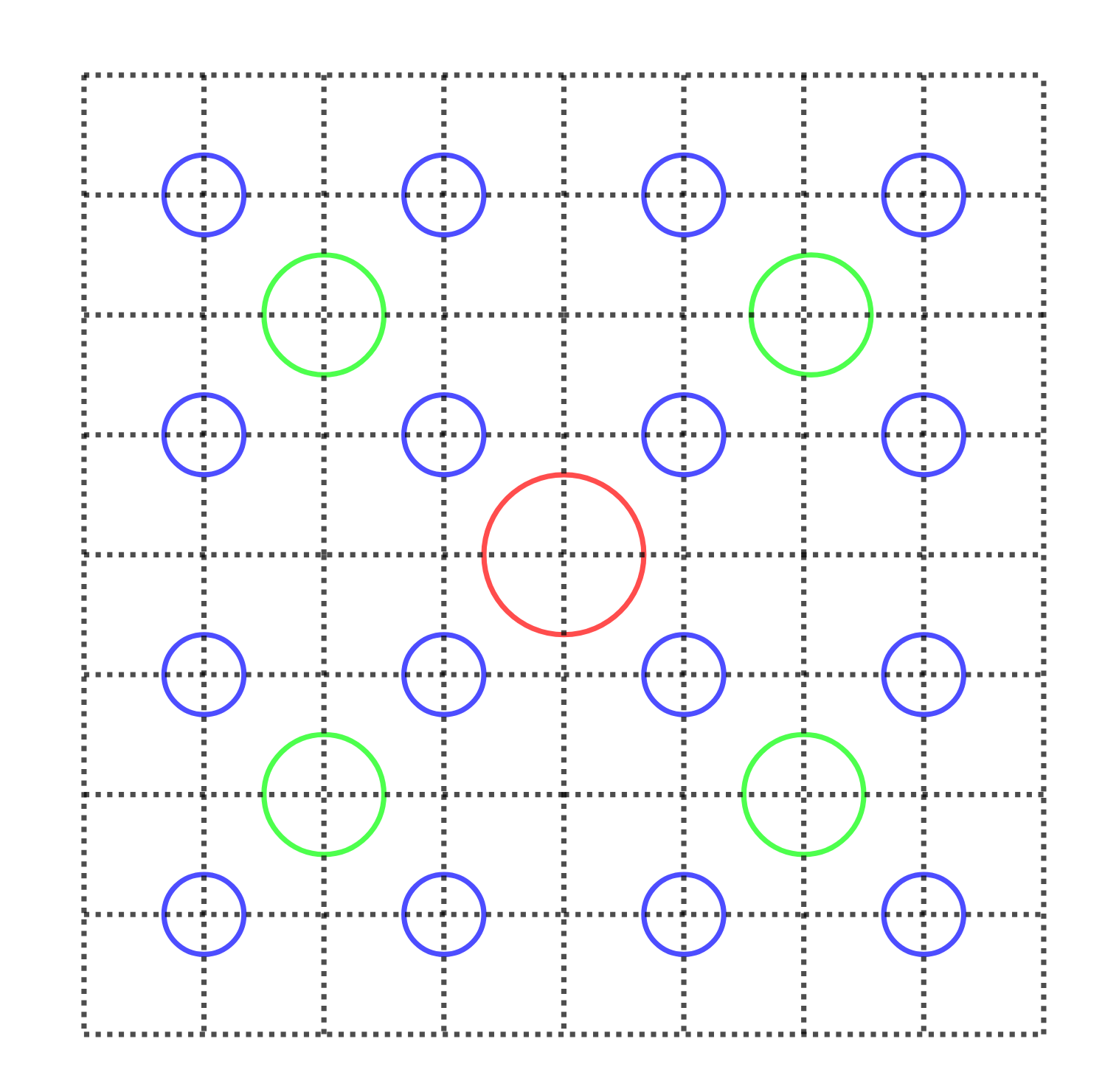}\label{densedisk}
\caption{The set $\mathcal D_h$}
\end{figure}

We define $\sC_k^{j}:=\mathsf D_k^{j}\times [1,2)$, $\widetilde \sC_k^{j}:=\widetilde{\mathsf D}_k^{j}\times[1,2)$ and $ A_k^{j}:=C_k^{j}\setminus\widetilde \sC_k^{j}$. We use the cylinders $\sC_k^{j}$ and $\widetilde \sC_k^{j}$ to define two domains:
$$\boz_h:=\mathcal Q_o\cup\bigcup_{k=1}^\fz\bigcup_{j}\sC_k^{j}\ {\rm and}\ \widetilde\boz_h:=\mathcal Q_o\cup\bigcup_{k=1}^\fz\bigcup_{n_k}\widetilde \sC_k^{j}.$$
 Given $m\in\mathbb N$, we set $$\boz_h^m:=\mathcal Q_o\cup\bigcup_{k=1}^m\bigcup_{j}\mathsf C_k^{j}\ {\rm and}\ \widetilde{\boz}_h^m:=\mathcal Q_o\cup\bigcup_{k=1}^m\bigcup_{j}\widetilde{\mathsf C}_k^{j}.$$  Figure $3$ illustrates the construction of these domains.

The following lemma goes back to a result of V\"ais\"al\"a \cite{vaisala}. See \cite[Pages 93-94]{vaisalafree} for a full proof. Also see \cite{Heinonen}.

\begin{lem}\label{lem:7qsb}
The domain $\widetilde{\boz}_h$ is quasiconformally equivalent to the unit ball: there is a quasiconformal mapping from the unit ball $B^n(0, 1)$ onto $\widetilde{\boz}_h$. 
\end{lem}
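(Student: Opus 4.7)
The plan is to invoke V\"ais\"al\"a's QC-extension result for ``tentacled'' domains (cf.~\cite{vaisala}, \cite[pp.~93--94]{vaisalafree}, \cite{Heinonen}). Since the unit cube $\mathcal Q_o$ is bi-Lipschitz equivalent to $B^n(0,1)$, it suffices to produce a quasiconformal homeomorphism $g\colon \mathcal Q_o \to \widetilde{\boz}_h$ with dilatation depending only on $n$, and then precompose with a bi-Lipschitz map $B^n(0,1) \to \mathcal Q_o$.

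The domain $\widetilde{\boz}_h = \mathcal Q_o \cup \bigcup_{k,j} \widetilde{\sC}_k^j$ is a cube with countably many cylindrical chimneys attached to its top face along the flat disks $\widetilde{\mathsf D}_k^j \times \{1\}$. Two structural features make V\"ais\"al\"a's construction applicable. First, the attaching disks lie in pairwise disjoint Whitney cubes $Q_k^j$, so different chimneys do not interact. Second, the definition of $r_k$ in \eqref{rkoot}, together with $h\le 1$ and $1\le q<n-1$, gives $r_k = o(2^{-k})$, so each base disk $\widetilde{\mathsf D}_k^j$ is much smaller than its containing Whitney cube, with a generous annular collar $\mathsf D_k^j \setminus \widetilde{\mathsf D}_k^j$ available inside $Q_k^j$ for the surgery.

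With these features in hand, the construction of $g$ proceeds one chimney at a time. For each pair $(k,j)$ I would build a local QC surgery $\phi_{k,j}$ supported in a work region $W_{k,j} \subset Q_k^j \times (0,1)$ which extrudes the chimney $\widetilde{\sC}_k^j$ out of the cube while remaining the identity on $\partial W_{k,j}$ away from the attaching disk. Since the work regions $W_{k,j}$ lie in pairwise disjoint Whitney cubes, defining $g$ to equal $\phi_{k,j}$ on each $W_{k,j}$ and the identity on the complement yields a well-defined homeomorphism $g\colon \mathcal Q_o \to \widetilde{\boz}_h$ that is locally, and therefore globally, $K$-quasiconformal.

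The main obstacle is the uniform dilatation bound: as $k \to \infty$ the chimneys become arbitrarily long and thin (with aspect ratio $2/r_k \to \infty$), so a naive surgery sending a small ball onto a long cylinder would have blowing-up distortion. V\"ais\"al\"a's argument circumvents this by choosing $W_{k,j}$ to be a ``worm'' inside $Q_k^j \times (0,1)$ with the same aspect ratio as the target chimney, which is possible precisely because of the Whitney-cube room $r_k = o(2^{-k})$; the surgery then factors as a bounded-distortion rescaling of a matching cylinder together with a collar interpolation through $\mathsf D_k^j \setminus \widetilde{\mathsf D}_k^j$, with dilatation bounded by a constant $K(n)$ independent of $(k,j)$. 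This uniformity is the heart of V\"ais\"al\"a's lemma, and once it is in place the proof is complete.
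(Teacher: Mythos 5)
Your proposal is correct and takes essentially the same route as the paper: the paper establishes Lemma \ref{lem:7qsb} simply by citing V\"ais\"al\"a's tentacle construction (\cite{vaisala}, with a full proof in \cite[pp.~93--94]{vaisalafree}, see also \cite{Heinonen}), which is exactly the result you invoke, and your outline of the chimney-by-chimney surgeries in disjoint Whitney cubes with uniformly bounded dilatation is a faithful sketch of that construction. The uniform dilatation bound that you rightly identify as the heart of the matter is not reproved in your proposal, but the paper likewise leaves it entirely to the cited references.
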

Hence, by Lemma \ref{lem:qbgh} and Lemma \ref{lem:density}, for arbitrary $1\leq p<\fz$, $W^{1,\fz}(\widetilde{\boz}_h)$ is dense in $W^{1, p}(\widetilde{\boz}_h)$.
Consequently, also $W^{1,\fz}(\widetilde{\boz}_h)\cap C(\widetilde{\boz}_h)$ is dense. 
\vskip 0.5 cm


\subsection{Cut-off functions}

Let $\mathsf C:=B^{n-1}(0, r)\times(0,1)$ and $\widetilde{\mathsf C}:=B^{n-1}(0, \frac{r}{2})\times(0, 1)$. Then $\mathsf C$ is a cylinder and $\widetilde{\mathsf C}$ is a sub-cylinder of $\mathsf C$. We define $A_\sC:=\sC\setminus\overline{\widetilde \sC}$. We employ the cylindrical coordinate system $$\{x=(x_1,x_2,\cdots,x_n)=(s, \theta_1, \theta_2, \cdots, \theta_{n-2}, x_n )\in\rn\}$$ where $\{(0, 0,\cdots, 0, x_n):x_n\in\rr\}$ is the rotation axis and $s=\sqrt{\sum_{i=1}^{n-1}x_i^2}$. For simplicity of notation, we write $\overrightarrow \theta=(\theta_1, \theta_2,\cdots, \theta_{n-2})$. Under this cylindrical coordinate system, we can write
$$\sC=\lf\{x=(s, \overrightarrow\theta, x_n)\in\rn;  x_n\in(0,1), s\in[0, r), \overrightarrow\theta\in[0, 2\pi)^{n-2}\r\},$$
$$\widetilde\sC=\lf\{x=(s, \overrightarrow\theta, x_n)\in\rn;  x_n\in(0,1), s\in\lf[0, \frac{r}{2}\r), \overrightarrow\theta\in[0, 2\pi)^{n-2}\r\},$$
and
$$A_\sC=\lf\{x=(s, \overrightarrow\theta, x_n,)\in\rn; x_n\in(0,1),  s\in\lf(\frac{r}{2}, r\r), \overrightarrow\theta\in[0, 2\pi)^{n-2}\r\}.$$
We define a subset $D_\sC$ of the cylinder $\sC$ by setting 
$$D_\sC:=\lf\{x=(s, \overrightarrow\theta, x_n)\in\rn; x_n\in\lf(0, \frac{r}{2}\r), s\in\lf(\frac{r}{2}, r-x_n\r), \overrightarrow\theta\in[0, 2\pi)^{n-2}\r\}.$$

\begin{figure}[htbp]
\centering
\includegraphics[width=0.4\textwidth]
{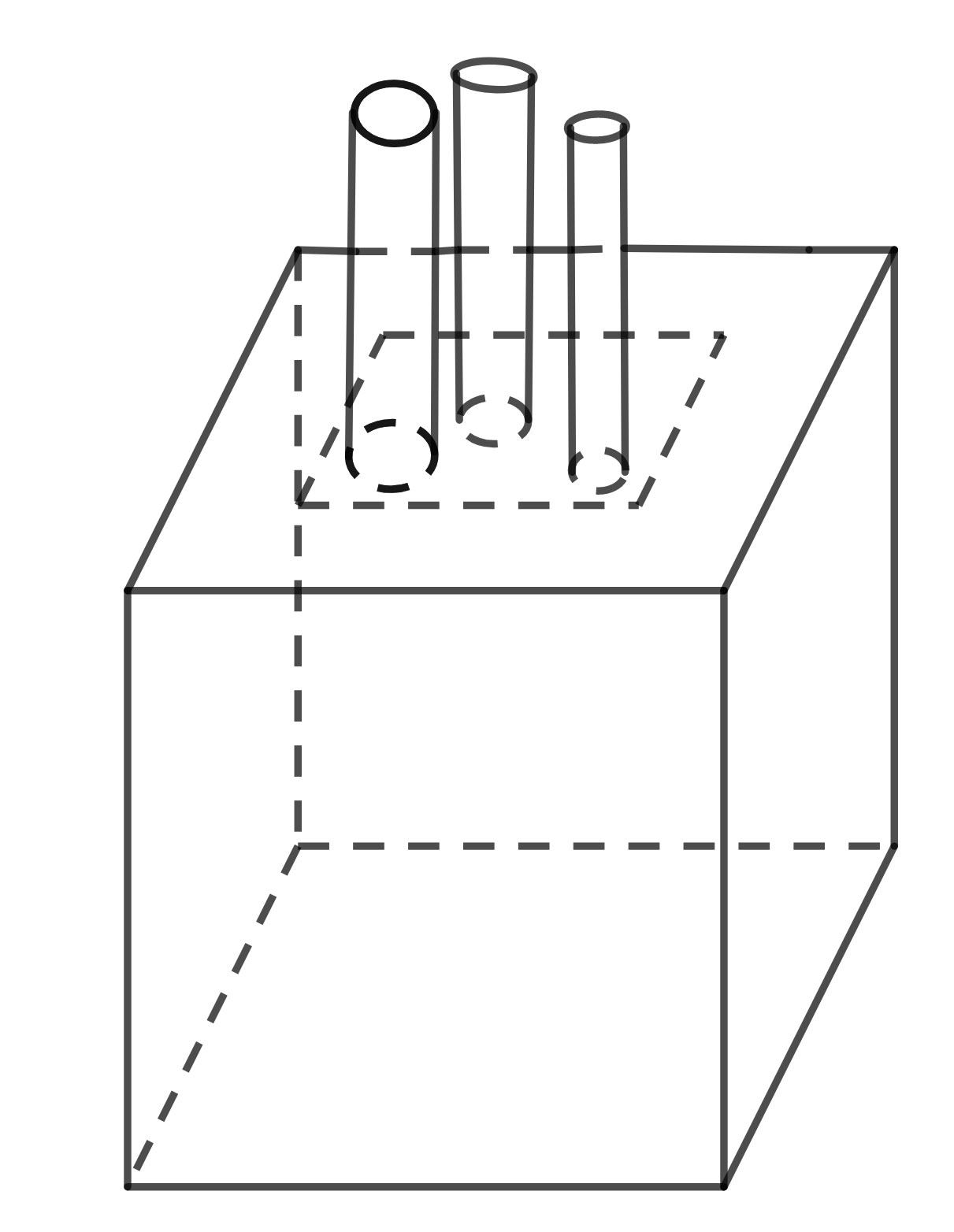}\label{fig:boz2}
\caption{The domain $\boz^\lambda_3$}
\end{figure} 

The next lemma gives two cut-off functions towards the construction of the desired extension operator.

\begin{lem}\label{lem:cut}
$(1):$ There exists a function $L^i_{\sC}: \overline{A_\sC}\to [0, 1]$ which is continuous on $\overline{A_\sC}\setminus\partial B^{n-1}(0, \frac{r}{2})\times\{0\}$, which equals zero both on $\lf(\overline B^{n-1}(0,r)\setminus B^{n-1}(0, \frac{r}{2})\r)\times \{0\}$ and on the set $\partial B^{n-1}(0, r)\times (0, 1)$, which equals $1$ on $\partial B^{n-1}(0, \frac{r}{2})\times(0,1)$ and which has the following additional properties. The function $L^i_\sC$ is Lipschitz on $A_\sC\setminus\overline{D_\sC}$ with 
$$|\nabla L^i_\sC(x)|\leq \frac{C}{r}\ {\rm for}\ x\in A_\sC\setminus\overline{D_\sC},$$
and $L^i_\sC$ is locally Lipschitz on $ D_\sC$ with
$$|\nabla L^i_\sC(x)|\leq\frac{C}{\sqrt{(s-\frac{r}{2})^2+x_n^2}}\ {\rm for}\ x\in D_\sC.$$
\\ 
$(2):$ There exists a function $L^o_\sC:\overline{ A_\sC}\to[0, 1]$, which is continuous on $\overline{A_\sC}\setminus\partial B^{n-1}(0,\frac{r}{2})\times\{0\}$, which equals zero on $\partial B^{n-1}(0, \frac{r}{2})\times[0,1)$, and which equals $1$ both on $\partial B^{n-1}(0,r)\times (0,1)$ and on
$\lf(B^{n-1}(0, r)\setminus\overline {B}^{n-1}(0, \frac{r}{2})\r)\times\{0\},$ 
and which has the additional following properties. The function $L^o_\sC$ is Lipschitz on $ A_\sC\setminus\overline{D_\sC}$ with
$$|\nabla L^o_\sC(x)|\leq\frac{C}{r}\ {\rm for}\ x\in  A_\sC\setminus\overline{D_\sC},$$
 and $L^o_\sC$ is locally Lipschitz on $D_\sC$ with
$$|\nabla L^o_\sC(x)|\leq\frac{C}{\sqrt{(s-\frac{r}{2})^2+x_n^2}}\ {\rm for}\ x\in D_\sC.$$
\end{lem}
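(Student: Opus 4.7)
The plan is to exploit the rotational symmetry of the prescribed boundary data, which is invariant under rotations in $\overrightarrow{\theta}$, and look for $L^i_\sC, L^o_\sC$ depending only on $(s, x_n)$. For such $\overrightarrow{\theta}$-independent functions the Euclidean gradient reduces to $\sqrt{(\partial_s L)^2+(\partial_{x_n} L)^2}$, so the problem becomes a two-dimensional construction on the rectangle $(r/2, r)\times(0, 1)$ with prescribed values along three of its sides.

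The only obstruction to continuity is the corner $(r/2, 0)$ in this two-dimensional picture, corresponding to the codimension-two edge $\partial B^{n-1}(0, r/2)\times\{0\}$ in $\rn$. The prescribed values force $L^i_\sC\to 1$ from the inner wall and $L^i_\sC\to 0$ from the bottom annulus as one approaches this corner (with the roles reversed for $L^o_\sC$), so the discontinuity there is unavoidable. To absorb this collision cleanly, I will introduce the polar coordinates $\phi(s, x_n)=\arctan(x_n/(s-r/2))\in[0, \pi/2]$ and $\rho(s, x_n)=\sqrt{(s-r/2)^2+x_n^2}$ around the corner. The harmonic angle function $(2/\pi)\phi$ already takes the correct colliding boundary values (namely $1$ on the inner wall for $x_n>0$ and $0$ on the bottom annulus), and its gradient satisfies $|\nabla(2\phi/\pi)|=2/(\pi\rho)$, matching the required singular bound exactly. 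To extinguish the function at the outer wall $s=r$, I multiply by the radial Lipschitz cutoff $\psi(s)=2(r-s)/r$, for which $\psi(r/2)=1$, $\psi(r)=0$ and $|\psi'|\equiv 2/r$, and set
\[
L^i_\sC(s, x_n):=\psi(s)\cdot\frac{2}{\pi}\phi(s, x_n)
\]
off the corner edge, extended by $0$ there. The required boundary values and continuity on $\overline{A_\sC}\setminus(\partial B^{n-1}(0, r/2)\times\{0\})$ are then immediate from the formula. A Leibniz estimate gives $|\nabla L^i_\sC|\le 2/r+2/(\pi\rho)$; since on $D_\sC$ the constraint $u+v<r/2$ forces $\rho\le r/2$ (whence $2/r\le 4/\rho$), one obtains the bound $|\nabla L^i_\sC|\le C/\rho$, while on $A_\sC\setminus\overline{D_\sC}$ a direct check gives $\rho\ge r/(2\sqrt 2)$, yielding the Lipschitz bound $|\nabla L^i_\sC|\le C/r$.

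For the second cutoff I simply set $L^o_\sC:=1-L^i_\sC$ off the corner edge and $L^o_\sC=0$ on the edge; this yields the symmetric boundary values (the edge belongs to $\partial B^{n-1}(0, r/2)\times[0,1)$, which demands the value $0$ there) and inherits identical gradient bounds. The main technical subtlety will be the need to preserve the inner-wall value $(2/\pi)\phi\equiv 1$ under the cutoff: this forces $\psi$ to be multiplied onto $(2/\pi)\phi$ rather than added or subtracted, because $\psi(r/2)=1$ ensures the inner-wall value is untouched, whereas any additive modification would either drag the value off $1$ on the inner wall or inject a $1/\rho$-size error far from the corner, violating the $C/r$ Lipschitz bound on $A_\sC\setminus\overline{D_\sC}$.
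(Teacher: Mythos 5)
Your construction is correct, and at its core it rests on the same idea as the paper's: reduce by rotational symmetry to the $(s,x_n)$-rectangle and absorb the colliding boundary data at the edge $\partial B^{n-1}(0,\frac r2)\times\{0\}$ by a zero-homogeneous ``angle'' function of $(s-\frac r2,x_n)$, whose gradient is of size $1/\rho$ with $\rho=\sqrt{(s-\frac r2)^2+x_n^2}$. The implementations differ, though. The paper defines $L^i_\sC$ piecewise: linear in $s$ (namely $-\frac2r s+2$) on $\overline{A_\sC}\setminus\overline{D_\sC}$ and the rational angle function $\frac{x_n}{x_n+(s-r/2)}$ on $D_\sC$, the point being that the two expressions agree on the interface $s=r-x_n$, and then writes down an analogous piecewise formula for $L^o_\sC$ (which, as in your proposal, is in fact $1-L^i_\sC$ away from the edge). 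You instead use a single global product $L^i_\sC=\psi(s)\cdot\frac2\pi\phi$, avoiding any gluing; the price is a product-rule estimate plus the two elementary geometric observations that $\rho<\frac r2$ on $D_\sC$ (so the $2/r$ term is dominated by $C/\rho$ there) and $\rho\geq \frac{r}{2\sqrt2}$ on $A_\sC\setminus\overline{D_\sC}$ (so the $1/\rho$ term is dominated by $C/r$ there); in your approach $D_\sC$ enters only in the verification, not in the definition. Both routes give the stated boundary values, the range $[0,1]$, and the gradient bounds, and your choice $L^o_\sC=1-L^i_\sC$ off the edge (set to $0$ on the edge) is legitimate since continuity is only required off $\partial B^{n-1}(0,\frac r2)\times\{0\}$. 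One small point worth a line in a written-up version: on $A_\sC\setminus\overline{D_\sC}$ the paper's function is linear in $s$, so its Lipschitz property is immediate, whereas for your product formula the pointwise bound $|\nabla L^i_\sC|\le C/r$ yields the Lipschitz estimate only after noting that $A_\sC\setminus\overline{D_\sC}$ is uniformly quasiconvex (its $(s,x_n)$-cross-section $\{s-\frac r2+x_n>\frac r2\}$ intersected with the rectangle is convex, and one may move along rotation orbits); this is routine and not a gap, and in any case only the pointwise gradient bounds and the boundary values are used later in the extension estimates.
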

\begin{proof}
$(1):$ We define the cut-off function $L^i_\sC$ on $\overline{A_\sC}$ with respect to the cylindrical coordinate system $\{x=(s, \overrightarrow\theta, x_n)\in\rn\}$ by setting
\begin{equation}\label{eq:cut2}
L^i_\sC(x)=\begin{cases} \frac{-2}{r}s+2, & x\in \overline{ A_\sC}\setminus\overline{D_\sC}, \\
 \frac{x_n}{x_n+(s-\frac{r}{2})}, &  x\in\overline{ D_\sC}\setminus\partial B^{n-1}(0, \frac{r}{2})\times\{0\},\\
 0, & x\in\partial B^{n-1}(0,\frac{r}{2})\times\{0\} \, . \end{cases}
\end{equation}
Then, if  $x\in  A_\sC\setminus\overline{D_\sC}$, we have 
$$
\frac{\partial L^i_\sC(x)}{\partial \theta_1}=...=\frac{\partial L^i_\sC(x)}{\partial \theta_{n-2}}=\frac{\partial L^i_\sC(x)}{\partial x_n}=0
\,\,\text{and}\,\,\left|\frac{\partial L^i_\sC(x)}{\partial s}\right|=\frac{2}{r}.
$$
If $x\in{D_\sC},$ we have
$$
\frac{\partial L^i_\sC(x)}{\partial \theta_1}=...=\frac{\partial L^i_\sC(x)}{\partial \theta_{n-2}}=0,
$$
$$
\left|\frac{\partial L^i_\sC(x)}{\partial s}\right|=\left|\frac{x_n}{\left(x_n+(s-\frac{r}{2})\right)^2}\right|\leq
\left|\frac{x_n+(s-\frac{r}{2})}{\left(x_n+(s-\frac{r}{2})\right)^2}\right|\leq \frac{1}{\sqrt{(s-\frac{r}{2})^2+x_n^2}}
$$
and
$$
\left|\frac{\partial L^i_\sC(x)}{\partial x_n}\right|=\left|\frac{(s-\frac{r}{2})}{\left(x_n+(s-\frac{r}{2})\right)^2}\right|\leq
\left|\frac{x_n+(s-\frac{r}{2})}{\left(x_n+(s-\frac{r}{2})\right)^2}\right|\leq \frac{1}{\sqrt{(s-\frac{r}{2})^2+x_n^2}}.
$$
Hence, we obtain 
\begin{equation}\label{equa:gra1}
|\nabla L^i_\sC(x)|\leq\begin{cases} \frac{C}{r}, & x\in A_\sC\setminus\overline{D_\sC},\\
\frac{C}{\sqrt{(s-\frac{r}{2})^2+x_n^2}}, & x\in {D_\sC} .
\end{cases}
\end{equation}

$(2):$ We define the cut-off function $L^o_\sC$ on $\overline{ A_\sC}$ with respect to the cylindrical coordinate system $\{x=(s, \overrightarrow\theta, x_n)\in\rn\}$ by setting 
\begin{equation}\label{eq:cut4}
L^o_\sC(x)=\begin{cases} \frac{2}{r}s-1, & x\in \overline{A_\sC}\setminus\overline{D_\sC}, \\
 \frac{s-\frac{r}{2}}{x_n+(s-\frac{r}{2})}, &  x\in\overline{D_\sC}\setminus\partial B^{n-1}(0, \frac{r}{2})\times\{0\},\\
 0, & x\in\partial B^{n-1}(0, \frac{r}{2})\times\{0\} \, . \end{cases}
\end{equation} 
By similar computations, we have
\begin{equation}\label{equa:gra3}
|\nabla L^o_\sC(x)|\leq \begin{cases} \frac{C}{r}, & x\in A_\sC\setminus\overline{D_\sC},\\
\frac{C}{\sqrt{x_n^2+(s-\frac{r}{2})^2}}, & x\in{D_\sC}.\end{cases}
\end{equation}
\end{proof}

\subsection{The extension operator}

Towards the construction of our extension operator, we define  piston-shaped domains $P_k^{j}$ by setting 
$$P_k^{j}:=\mathsf D_k^{j}\times(0, 1)\cup\widetilde{\mathsf D}_k^{j}\times[1,2).$$
The collection $\{P_k^{j}\}$ is pairwise disjoint. We set $U_1:=\mathcal S_o\times(1,2)\setminus\boz_h.$ 

 Given a cylinder $\sC_k^j$, in order to simplify our notation, we write $L^i_{k,j}= L^i_{\sC_k^{j}}$, $L^o_{k,j}=L^o_{\sC_k^{j}}$, $A_k^{j}=A_{\sC_k^{j}}$ and $ D_k^{j}= D_{\sC_k^{j}}$. Then we define cut-off functions $L^i$ and $L^o$ by setting
\begin{equation}\label{eq:tocut}
L^i(x):=\sum_{k,j}L^i_{k,j}(x)\ {\rm for}\ x\in\bigcup_{k, j}\overline{A_k^{j}}, 
\end{equation}
and
\begin{equation}\label{eq:tocut2}
L^o(x):=\sum_{k,j}L^o_{k,j}(x)\ \ {\rm for}\ \ x\in\bigcup_{k,j}\overline{ A_k^{j}}.
\end{equation}

We define a reflection on $\mathcal S_o\times(1, 2)$ by setting 
\begin{equation}\label{eq:ref1}
\mathcal R_1(x):=\lf(x_1,x_2,\cdots, x_{n-1}, 2-x_n\r)\ {\rm for\ every}\ x=(x_1,x_2,\cdots, x_n)\in\mathcal S_o\times(1, 2).
\end{equation}
On the set $\bigcup_{k,j} A_k^{j}$, we define a mapping $\mathcal R_2$ which is a reflection on every $A_k^{j}$. With respect to the local cylindrical coordinate system on every $A_k^{j}$, we write
\begin{equation}\label{eq:ref2}
\mathcal R_2(x):=\mathcal R_2(s, \overrightarrow\theta,  x_n)=\lf(-\frac s 2 +\frac 3 4 r_k, \overrightarrow\theta, x_n\r)
\end{equation}
for $x=(s, \overrightarrow\theta,  x_n)\in A_k^{j}$. Simple computations give the estimates
\begin{equation}\label{eq:jaco1}
\frac{1}{C}\leq|J_{\mathcal R_1}(x)|\leq C\ {\rm and}\ |D\mathcal R_1(x)|\leq C,
\end{equation}
for every $x\in\mathcal S_o\times(1, 2)$, and
\begin{equation}\label{eq:jaco2}
\frac{1}{C}\leq|J_{\mathcal R_2}(x)|\leq C\ {\rm and}\ |D\mathcal R_2(x)|\leq C,
\end{equation}
for every $x\in\bigcup_{k,j}A_k^{j}$. 

We begin by defining our linear extension operator on the dense subspace $W^{1,\fz}(\widetilde{\boz}_h)\cap C(\widetilde {\boz}_h)$ of $W^{1,p}(\widetilde{\boz}_h).$  Given $u\in W^{1,\fz}(\widetilde{\boz}_h)\cap C(\widetilde{\boz}_h)$, we define the extension $E(u)$ on the rectangle $\mathcal C_o$ by setting 
\begin{equation}\label{eq:exten}
E(u)(x):=\begin{cases} u(x), & x\in\widetilde{\boz}_h,\\
L^i(x)(u\circ\mathcal R_2)(x)+L^o(x)(u\circ\mathcal R_1)(x), & x\in \bigcup_{k,j}\overline{A_k^{j}},\\
(u\circ\mathcal R_1)(x), & x\in U_1\, . \end{cases}
\end{equation}
We continue with the local properties of our extension operator.
 
\begin{lem}\label{le:lopro}
Let $E$ be the extension operator defined in (\ref{eq:exten}). Then, for every $u\in W^{1, \fz}(\widetilde\boz_h)\cap C(\widetilde {\boz}_h)$, we have:

\textbf{$(1)$:} $E(u)$ is Lipschitz on $U_1$ with 
\begin{equation}\label{eq:din1}
|\nabla E(u)(x)|\leq |\nabla (u\circ \mathcal R_1)(x)|
\end{equation}
for almost every $x\in U_1$.

\textbf{$(2)$:} $E(u)$ is locally Lipschitz on ${A_k^{n_k}}$ with 
\begin{multline}\label{eq:din2}
|\nabla E(u)(x)|\leq |\nabla L^i_{k,n_k}(x)(u\circ\mathcal R_2)(x)|+|L^i_{k,n_k}(x)\nabla(u\circ\mathcal R_2)(x)|\\
+|\nabla L^o_{k,n_k}(x)(u\circ\mathcal R_1)(x)|+|L^o_{k,n_k}(x)\nabla(u\circ\mathcal R_1)(x)|
\end{multline}
for almost every $x\in{A_k^{j}}$. 

Moreover, with respect to the local cylindrical system $x=(s,\overrightarrow\theta, x_n)$ on $\mathsf C_k^j$, for every $1\leq q<\fz$, we have
\begin{equation}\label{eq:nin1}
\int_{\mathsf C_k^{j}}|E(u)(x)|^qdx\leq C\int_{P_k^{j}}|u(x)|^qdx
\end{equation}
 and 
 \begin{multline}\label{eq:nin2}
 \int_{\mathsf C_k^{j}}|\nabla E(u)(x)|^qdx\leq C\int_{P_k^{j}}|\nabla u(x)|^qdx\\
 +C\int_{D_k^{j}}\lf(\sqrt{\frac{1}{x_n^2+\lf(s-\frac{r_k}{2}\r)^2}}\r)^{q}(|u\circ\mathcal R_1(x)|^q+|u\circ\mathcal R_2(x)|^q)dx\\
 +C\int_{A_k^{j}\setminus\overline{D_k^{j}}}\lf(\frac{1}{r_k}\r)^{q}(|u\circ\mathcal R_1(x)|^q+|u\circ\mathcal R_2(x)|^q)dx,
 \end{multline}
 with some uniform positive constant $C$.
\end{lem}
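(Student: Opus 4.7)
The plan is to verify the three statements in order, exploiting the explicit product structure of $E(u)$ on the annular region $A_k^{j}$ together with the pointwise bounds from Lemma \ref{lem:cut} and the bi-Lipschitz nature of the reflections established in \eqref{eq:jaco1} and \eqref{eq:jaco2}.

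For part $(1)$, on $U_1$ the extension reduces to $E(u)=u\circ \mathcal R_1$ where $\mathcal R_1$ is an orthogonal reflection in the hyperplane $\{x_n=1\}$. Since $u\in W^{1,\infty}(\widetilde\boz_h)\cap C(\widetilde\boz_h)$ is Lipschitz on $\mathcal R_1(U_1)\subset\widetilde\boz_h$, the composition $u\circ\mathcal R_1$ is Lipschitz on $U_1$ and the chain rule gives $\nabla E(u)(x)=\nabla(u\circ \mathcal R_1)(x)$ a.e., which is stronger than \eqref{eq:din1}.

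For part $(2)$, first note that on each $A_k^{j}$ both cut-offs $L^i_{k,j},L^o_{k,j}$ are locally Lipschitz by Lemma \ref{lem:cut}, the only singular stratum lying on $\partial A_k^{j}$. A direct verification from \eqref{eq:ref1}--\eqref{eq:ref2} shows that $\mathcal R_1(A_k^{j})\subset \mathsf D_k^{j}\times(0,1]\subset P_k^{j}$ and $\mathcal R_2(A_k^{j})\subset \widetilde{\mathsf D}_k^{j}\times[1,2)\subset P_k^{j}$; this is the key observation that drives every subsequent change of variables. Since $u$ is Lipschitz on $P_k^{j}\subset\widetilde\boz_h$, the compositions $u\circ \mathcal R_1$ and $u\circ \mathcal R_2$ are locally Lipschitz on $A_k^{j}$, so $E(u)$ is locally Lipschitz there by the product rule, which together with the triangle inequality yields \eqref{eq:din2}.

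Now split $\mathsf C_k^{j}=\widetilde{\mathsf C}_k^{j}\cup A_k^{j}$. On $\widetilde{\mathsf C}_k^{j}\subset \widetilde\boz_h$ we have $E(u)=u$, so this part contributes at most $\int_{P_k^{j}}|u|^q\,dx$ and $\int_{P_k^{j}}|\nabla u|^q\,dx$ to the respective norms. On $A_k^{j}$, to prove \eqref{eq:nin1} one uses $0\le L^i_{k,j},L^o_{k,j}\le 1$ and the elementary inequality $(a+b)^q\le 2^{q-1}(a^q+b^q)$ to reduce to estimating $\int_{A_k^{j}}|u\circ\mathcal R_i|^q\,dx$; a change of variables using \eqref{eq:jaco1}--\eqref{eq:jaco2} converts each such integral into a uniform constant times $\int_{\mathcal R_i(A_k^{j})}|u|^q\,dy\le \int_{P_k^{j}}|u|^q\,dy$.

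Finally, for \eqref{eq:nin2} we start from \eqref{eq:din2} on $A_k^{j}$. The two terms of the form $L^i_{k,j}\nabla(u\circ \mathcal R_2)$ and $L^o_{k,j}\nabla(u\circ \mathcal R_1)$ are handled using $L^{i},L^{o}\le 1$, the chain rule with $|D\mathcal R_i|\le C$, and the same change of variables as above, producing the $\int_{P_k^{j}}|\nabla u|^q\,dx$ term. The two remaining terms are bounded by $(|u\circ \mathcal R_1|+|u\circ \mathcal R_2|)$ multiplied by $|\nabla L^i_{k,j}|+|\nabla L^o_{k,j}|$; inserting the pointwise bounds from Lemma \ref{lem:cut} (namely $C/r_k$ on $A_k^{j}\setminus\overline{D_k^{j}}$ and $C/\sqrt{(s-r_k/2)^2+x_n^2}$ on $D_k^{j}$) gives exactly the last two integrals of \eqref{eq:nin2}, while the factors $|u\circ\mathcal R_i|^q$ remain under the integrals as in the statement (no further change of variables is performed on these). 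The only non-routine point is the verification that both $\mathcal R_i(A_k^{j})\subset P_k^{j}$, so that the constants in the change of variables are independent of $(k,j)$; everything else is an application of the product rule, the triangle inequality, and uniform Jacobian bounds.
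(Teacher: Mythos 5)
Your proposal is correct and follows essentially the same route as the paper: chain/product rule for the pointwise bounds \eqref{eq:din1}--\eqref{eq:din2}, the split of $\mathsf C_k^{j}$ into $\widetilde{\mathsf C}_k^{j}$ (where $E(u)=u$) and $A_k^{j}$, the bounds $0\le L^i_{k,j},L^o_{k,j}\le 1$ with the Jacobian estimates \eqref{eq:jaco1}--\eqref{eq:jaco2} and a change of variables for \eqref{eq:nin1} and for the terms $L\,\nabla(u\circ\mathcal R_i)$, and the cut-off gradient bounds \eqref{equa:gra1}, \eqref{equa:gra3} for the remaining terms, left uncomposed exactly as in \eqref{eq:nin2}. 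The only difference is that you make explicit the inclusions $\mathcal R_1(A_k^{j}),\mathcal R_2(A_k^{j})\subset P_k^{j}$ (up to a measure-zero slice), which the paper leaves implicit.
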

\begin{proof}
Since $u\in W^{1,\fz}(\widetilde\boz_h)\cap C(\widetilde {\boz}_h)$, definitions of cut-off functions $L^i, L^o$ and reflections $\mathcal R_1, \mathcal R_2$ easily yield that $E(u)$ is Lipschitz on $U_1$ and that $E(u)$ is locally Lipschitz on $A_k^{j}$ for every $k$ and $j$. Inequalities (\ref{eq:din1}) and (\ref{eq:din2}) follow by the chain rule.

By the definition of $E(u)$ in (\ref{eq:exten}), we have
\begin{multline}\label{eq:Enin1}
\int_{\mathsf C_k^{j}}|E(u)(x)|^qdx\leq \int_{P_k^{j}}|u(x)|^qdx\\
+\int_{A_k^{j}}|L^i_{k,j}(x)(u\circ\mathcal R_2)(x)+L^o_{k,j}(x)(u\circ\mathcal R_1)(x)|^qdx.  
\end{multline}
Since $0\leq L^i_{k,j}(x)\leq 1$ and $0\leq L^o_{k,j}(x)\leq1$ for every $x\in A_k^{j}$, by (\ref{eq:jaco1}), (\ref{eq:jaco2}) and the change of variables formula, we have
\begin{multline}\label{eq:Enin2}
\int_{A_k^{j}}|L^i_{k,j}(x)(u\circ\mathcal R_2)(x)+L^o_{k,j}(x)(u\circ\mathcal R_1)(x)|^qdx\\\leq C\int_{A_k^{j}}|u\circ\mathcal R_1(x)|^qdx+C\int_{A_k^{j}}|u\circ\mathcal R_2(x)|^qdx\\
\leq C\int_{P_k^{j}}|u(x)|^qdx.
\end{multline}
By combining inequalities (\ref{eq:Enin1}) and (\ref{eq:Enin2}), we obtain inequality (\ref{eq:nin1}).

By inequality (\ref{eq:din2}), we have 
\begin{equation}\label{eq:nEnin1}
\int_{\mathsf C_k^{j}}|\nabla E(u)(x)|^qdx\leq \int_{P_k^{j}}|\nabla u(x)|^qdx+I^{k, j}_1+I^{k,j}_2,
\end{equation}
where
\begin{equation}\label{eq:nEnin2}
I^{k,j}_1:=\int_{A_k^{j}}|L^i_{k,j}(x)\nabla(u\circ\mathcal R_2)(x)|^qdx+
\int_{A_k^{j}}|L^o_{k,j}(x)\nabla(u\circ\mathcal R_1)(x)|^qdx\nonumber
\end{equation}
and
\begin{equation}\label{eq:nEnin3}
I^{k,j}_2:=\int_{A_k^{j}}|\nabla L^i_{k,j}(x)(u\circ\mathcal R_2)(x)|^qdx+\int_{A_k^{j}}|\nabla L^o_{k,j}(x)(u\circ\mathcal R_1)(x)|^qdx.\nonumber
\end{equation}
Arguing as for (\ref{eq:Enin2}), we have
\begin{equation}\label{eq:nEnin4}
I^{k,j}_1\leq C\int_{P_k^{j}}|\nabla u(x)|^qdx.
\end{equation}
By inequality (\ref{equa:gra1}), we have 
\begin{multline}\label{eq:nEnin5}
\int_{A_k^{j}}|\nabla L^i_{k,j}(x)(u\circ\mathcal R_2)(x)|^qdx\\
 \leq C\int_{A_k^{j}\setminus\overline{D_k^{j}}}\lf(\frac{1}{r_k}\r)^{q}|(u\circ\mathcal R_2)(x)|^qdx\\
 +C\int_{D_k^{j}}\lf(\sqrt{\frac{1}{x_n^2+\lf(s-\frac{r_k}{2}\r)^2}}\r)^{q}|(u\circ\mathcal R_2)(x)|^qdx.
  \end{multline}
 By (\ref{equa:gra3}), we have 
 \begin{multline}\label{eq:nEnin6}
\int_{A_k^{j}}|\nabla L^o_{k,j}(x)(u\circ\mathcal R_1)(x)|^qdx\\
 \leq C\int_{A_k^{j}\setminus\overline{D_k^{j}}}\lf(\frac{1}{r_k}\r)^{q}|(u\circ\mathcal R_1)(x)|^qdx\\
 +C\int_{D_k^{j}}\lf(\sqrt{\frac{1}{x_n^2+\lf(s-\frac{r_k}{2}\r)^2}}\r)^{q}|(u\circ\mathcal R_1)(x)|^qdx.
  \end{multline}
  In conclusion, (\ref{eq:nEnin5}) and (\ref{eq:nEnin6}) give
  \begin{multline}\label{eq:nEnin7}
  I^{k,j}_2\leq C\int_{D_k^{j}}\lf(\sqrt{\frac{1}{x_n^2+\lf(s-\frac{r_k}{2}\r)^2}}\r)^{q}((|u\circ\mathcal R_1)(x)|^q+|(u\circ\mathcal R_2)(x)|^q)dx\\
 +C\int_{A_k^{j}\setminus\overline{D_k^{j}}}\lf(\frac{1}{r_k}\r)^{q}(|(u\circ\mathcal R_1)(x)|^q+|(u\circ\mathcal R_2)(x)|^q)dx.
  \end{multline}
  Finally, by combining inequalities (\ref{eq:nEnin1}), (\ref{eq:nEnin4}) and (\ref{eq:nEnin7}), we obtain inequality (\ref{eq:nin2}).
\end{proof}

\subsection{An extension theorem}

The following theorem provides us with examples of irregular extension domains.

\begin{thm}\label{thm:BET}
Let $1\leq q<n-1$ and ${(n-1)q}/{(n-1-q)}<p<\fz$ be fixed. Given $\lambda>0,$ define
\begin{equation}\label{eq:defnh}
h_\lambda(t):=\lf(\frac{1}{t}\r)^{((1-\lambda(n-1-q))(n-1)(k+1)+k)/3k}.
\end{equation}
There exists  $\lambda_o:=\lambda_o(p, q)>0$ such that $\widetilde\boz_h\subset\rn$ is a 
Sobolev $(p, q)$-extension domain with $|\partial \widetilde \boz_h|>0$ whenever $h(t)\le h_\lambda(t)$ for some $\lambda>\lambda_o$ and all $0<t\le 1.$ 
\end{thm}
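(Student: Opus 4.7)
The measure statement would follow immediately from the construction: any point $(x,t)\in E^{n-1}\times[1,2)$ lies in $\overline{\widetilde\boz_h}\setminus\widetilde\boz_h$, since $E^{n-1}\subset\overline{\widetilde{\mathcal D}_h}$ while $E^{n-1}\cap\widetilde{\mathsf D}_k^j=\emptyset$ for every $k,j$. Hence $|\partial\widetilde\boz_h|\ge\mathcal H^{n-1}(E^{n-1})\cdot 1>0$.

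For the extension claim, the plan is to verify that the operator $E$ built in (7.6) satisfies $\|E(u)\|_{W^{1,q}(\mathcal C_o)}\le C\|u\|_{W^{1,p}(\widetilde\boz_h)}$ for every $u\in W^{1,\infty}(\widetilde\boz_h)\cap C(\widetilde\boz_h)$. By Lemmas \ref{lem:7qsb}, \ref{lem:qbgh} and \ref{lem:density}, $\widetilde\boz_h$ is quasiconformally equivalent to the unit ball and hence Gromov hyperbolic in the quasihyperbolic metric, so this subclass is dense in $W^{1,p}(\widetilde\boz_h)$; afterwards a standard Lipschitz extension from the rectangular box $\mathcal C_o$ to $\rn$ will finish the construction of the operator. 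Invoking Lemma \ref{le:lopro} cylinder by cylinder, the $L^q$-norm bound (7.9) sums to $\|E(u)\|_{L^q}\le C\|u\|_{L^q(\widetilde\boz_h)}\le C\|u\|_{L^p}$ via H\"older and the finite volume of $\widetilde\boz_h$. The gradient bound (7.10) splits naturally into three pieces: a matrix term $\sum_{k,j}\int_{P_k^j}|\nabla u|^q$ handled directly by H\"older against $\|\nabla u\|_{L^p}^q$; a regular cutoff term over $A_k^j\setminus\overline{D_k^j}$ with $|\nabla L^{i,o}|\lesssim r_k^{-1}$, which after change of variables under the reflections and H\"older sums with the help of $N_kr_k^{n-1-q}\le 2^{-k}h(8^{-k})$ from (7.11), the bound $h\le h_\lambda$ from (7.17), and the assumption $p>(n-1)q/(n-1-q)$, converging once $\lambda$ exceeds a dimension- and exponent-dependent threshold $\lambda_o(p,q)$; and finally a singular cutoff term over $D_k^j$ which is the crux.

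The hard part will be the singular term, where $|\nabla L^{i,o}|\lesssim \rho(x)^{-1}$ with $\rho(x)=\sqrt{(s-r_k/2)^2+(x_n-1)^2}$ the distance to the corner circle $\partial\widetilde{\mathsf D}_k^j\times\{1\}$. A naive H\"older estimate on $\int_{D_k^j}\rho^{-q}|u\circ\mathcal R_j|^q\,dx$ already diverges for $q\ge 2$, since $\rho^{-2}$ is not locally integrable in the plane. The rescue comes from the identity $L^i+L^o\equiv 1$, which forces $\nabla L^i=-\nabla L^o$ on $A_k^j$; rewriting
\begin{equation*}
\nabla E(u)=\nabla(u\circ\mathcal R_1)+L^i\nabla(u\circ\mathcal R_2-u\circ\mathcal R_1)+\nabla L^i\cdot(u\circ\mathcal R_2-u\circ\mathcal R_1),
\end{equation*}
only the difference $u\circ\mathcal R_2-u\circ\mathcal R_1$ appears against the singular weight. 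A direct computation confirms that both $\mathcal R_1$ and $\mathcal R_2$ fix the corner circle pointwise and satisfy $|\mathcal R_2(x)-\mathcal R_1(x)|\le C\rho(x)$. For $u\in W^{1,\infty}\cap C$ I would join $\mathcal R_1(x)$ to $\mathcal R_2(x)$ by a curve $\gamma(x)\subset\widetilde\boz_h$ of length $\lesssim\rho(x)$ passing through the seam $\widetilde{\mathsf D}_k^j\times\{1\}$, obtaining by H\"older along $\gamma(x)$
\begin{equation*}
|u\circ\mathcal R_2(x)-u\circ\mathcal R_1(x)|^q\le C\rho(x)^{q-1}\int_{\gamma(x)}|\nabla u|^q\,d\mathcal H^1.
\end{equation*}
A Fubini interchange with tubular coordinates around the corner circle then bounds the full singular integral on $D_k^j$ by $C\int_{\widetilde P_k^j}|\nabla u|^q$, where $\widetilde P_k^j$ is a bounded dilate of $P_k^j$ inside $\widetilde\boz_h$; summing over $k,j$ and applying H\"older against the finite volume delivers the desired control by $\|\nabla u\|_{L^p(\widetilde\boz_h)}^q$. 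Everything outside this cancellation is essentially bookkeeping of the various sums, for which the threshold $\lambda_o(p,q)$ is precisely tuned.
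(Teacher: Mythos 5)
Your overall scheme is the same as the paper's: the positive boundary volume is immediate from $E^{n-1}\times[1,2]\subset\partial\widetilde\boz_h$, and the extension bound is proved for the operator \eqref{eq:exten} on the dense class $W^{1,\fz}(\widetilde\boz_h)\cap C(\widetilde\boz_h)$ (Lemmas \ref{lem:7qsb}--\ref{lem:density}), with the gradient split as in Lemma \ref{le:lopro} and H\"older from $p$ to $q$ combined with the geometric decay $r_k\le 2^{-\lambda(n-1)(k+1)}$ forced by $h\le h_\lambda$. Where you genuinely depart from the paper is the singular corner term: the paper estimates $\int_{D_k^j}\rho^{-q}(|u\circ\mathcal R_1|^q+|u\circ\mathcal R_2|^q)\,dx$ directly by H\"older, producing the weight integral $\int_{D_k^j}\rho^{-pq/(p-q)}dx$ in \eqref{eq:Eein3} and \eqref{eq:Eein7}, with no cancellation; you instead use $L^i+L^o\equiv1$ to replace $u\circ\mathcal R_i$ by the difference $u\circ\mathcal R_2-u\circ\mathcal R_1$, control that difference by a curve integral of $|\nabla u|$ along a path of length comparable to $\rho(x)$ through the seam, and then sum via a Fubini/tubular-coordinates argument. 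Your motivation is legitimate: the singular set $\partial\widetilde{\mathsf D}_k^j\times\{1\}$ has codimension two, so $\rho^{-\beta}$ is integrable over $D_k^j$ only for $\beta<2$, and the direct weight estimate as written in \eqref{eq:Eein7} needs $pq/(p-q)<2$ (automatic for $n=3$, but not for all admissible $(p,q)$ when $n\ge4$), whereas your cancellation only ever meets the weight $\rho^{-1}$; in your route the threshold $\lambda_o$ is needed only for the regular region $A_k^j\setminus\overline{D_k^j}$, where the relevant sum after H\"older is $\sum_k N_k r_k^{\,n-1-pq/(p-q)}$, not the quantity $N_kr_k^{n-1-q}$ you quote from \eqref{rkoot}.

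Two points must still be supplied before your plan is a proof. First, the crux chaining step is asserted rather than carried out: you need to exhibit curves $\gamma(x)\subset\widetilde\boz_h$ joining $\mathcal R_1(x)$ to $\mathcal R_2(x)$, of length at most $C\rho(x)$, which stay at distance comparable to $\rho(x)$ from the corner circle (in the transverse two-plane the domain near the corner is a three-quarter plane, so arcs of radius comparable to $\rho(x)$ will do), and then perform the overlap/Jacobian computation showing $\int_{D_k^j}\rho(x)^{-1}\big(\int_{\gamma(x)}|\nabla u|^q\,d\mathcal H^1\big)dx\le C\int_{\widetilde P_k^j}|\nabla u(y)|^q\,dy$ with the sets $\widetilde P_k^j$ of bounded overlap in $(k,j)$; without controlling how often a point near the circle is hit by the curve family, the Fubini interchange is not automatic. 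Second, you estimate $E(u)$ piece by piece but never argue that the function defined by \eqref{eq:exten} is weakly differentiable on $\mathcal C_o$: the cylinders accumulate on a boundary set of positive measure, and this is precisely why the paper passes through the truncated domains $\widetilde\boz_h^m$, the operators $E^m(u_m)$, and a Cauchy-sequence argument culminating in \eqref{eq:E42}. You should either borrow that exhaustion or give an ACL-type gluing argument. Both items look completable, but as written they are the places where your proposal remains a sketch.
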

\begin{proof}
By the definition of $h_\lambda$ and \eqref{rkoot}, we have 
\begin{equation}\label{eq:radius}
r_k\leq 2^{-\lambda(n-1)(k+1)}.
\end{equation}
Set 
\begin{equation}\label{lambdaoo}
\lambda_o(p, q):=\max\lf\{\frac{n-1-p}{(n-1)^2}, \frac{p-q}{(n-1)(p-q)-pq}\r\}.
\end{equation}
Then, for every $\lambda>\lambda_o$, we have $1\leq q<\frac{((n-1)\lambda-1)p}{\lambda p+(n-1)\lambda-1}<n-1$. Fix such a $\lambda.$ To simplify our notation, we refer to $h_\lambda$ by $h$ in what follows.
Since $E^{n-1}\subset\partial\widetilde{\mathcal D}_h$ and $\mathcal H^{n-1}(E^{n-1})>0$, we have $E^{n-1}\times[1,2]\subset\partial\widetilde\boz_h$ and $\mathcal H^n(\partial\widetilde\boz_h)\ge \mathcal H^n(E^{n-1}\times[1,2])>0$. 

In order to prove that $E$ defined in (\ref{eq:exten}) is a bounded extension operator, we need an approximation argument. Given $u\in W^{1,\fz}(\widetilde\boz_h)\cap C(\widetilde \boz_h)$ and  $m\in\mathbb N$, we 
define $u_m:=u\big|_{\widetilde{\boz}_h^m}.$ Since $\widetilde{\boz}_h^m$ is clearly quasiconvex, it follows that $u_m$ is Lipschitz and bounded. We define the extension $E^m(u_m)$ of $u_m$ by setting 
\begin{equation}\label{eq:extenm}
E^m(u_m)(x):=\begin{cases} u_m(x), & x\in\widetilde{\boz}^m_h,\\
L^i(x)(u_m\circ\mathcal R_2)(x)+L^o(x) (u_m\circ\mathcal R_1)(x), & x\in \bigcup_{k=1}^m\bigcup_{j}\overline{A_k^{j}},\\
(u_m\circ\mathcal R_1)(x), & x\in U_1^m \, ,\end{cases}\nonumber
\end{equation}
where $U_1^m=\mathcal{S}_0\times (0,1)\setminus \boz_h^m.$
Since $u_m$ is Lipschitz, $E^m(u_m)$ is $ACL$ on $\mathcal C_o$. By the definition of $u_m$ and the H\"older inequality, we have 
\begin{equation}
\label{eq:in1}
\int_{\widetilde\boz_h^m}|u_m(x)|^qdx\leq\int_{\widetilde\boz_h}|u(x)|^qdx\leq C\lf(\int_{\widetilde\boz_h}|u(x)|^pdx\r)^{\frac{q}{p}}
\end{equation}
and
\begin{equation}\label{eq:in11}
\int_{\widetilde\boz_h^m}|\nabla u_m(x)|^qdx\leq\int_{\widetilde\boz_h}|\nabla u(x)|^qdx\leq C\lf(\int_{\widetilde\boz_h}|\nabla u(x)|^pdx\r)^{\frac{q}{p}}.
\end{equation}

Since the collection $\{P_k^{j}\}$ is pairwise disjoint, by summing over $j$ and $k$, (\ref{eq:nin1}) and the H\"older inequality imply
\begin{multline}\label{eq:norm1}
\int_{\bigcup_{k=1}^m\bigcup_{j}\mathsf C_k^{j}}|E^m(u_m)(x)|^qdx\leq C\int_{\bigcup_{k=1}^m\bigcup_{j}P_k^{j}}|u_m(x)|^qdx\leq C\lf(\int_{\widetilde \boz_h}|u(x)|^pdx\r)^{\frac{q}{p}}.
\end{multline}
By (\ref{eq:jaco1}), the change of variables formula and the H\"older inequality, we have 
\begin{equation}
\label{eq:norm3}
\int_{U_1^m}|u_m\circ\mathcal R_1(x)|^qdx
      \leq\int_{\mathcal R_1\lf(U_1^m\r)}|u_m(x)|^qdx
      \leq C\lf(\int_{\widetilde{\boz}_h}|u(x)|^pdx\r)^{\frac{q}{p}}.
\end{equation}
Consequently, by combining (\ref{eq:in1}), (\ref{eq:norm1}) and (\ref{eq:norm3}), we obtain 
\begin{equation}\label{eq:IN1}
\lf(\int_{\mathcal C_o}|E^m(u_m)(x)|^qdx\r)^{\frac{1}{q}}\leq C\lf(\int_{\widetilde{\boz}_h}|u(x)|^pdx\r)^{\frac{1}{p}},
\end{equation}
where the constant $C$ is independent of $m$ and $u$.

By (\ref{eq:din1}), we have 
\begin{equation}
\label{eq:E1}
\int_{U_1^m}|\nabla E^m(u_m)(x)|^qdx
\leq 
 \int_{U_1^m}|\nabla (u_m\circ\mathcal R_1)(x)|^qdx.
\end{equation}
By (\ref{eq:jaco1}), the change of variables formula and the H\"older inequality, we obtain 
\begin{multline}
\label{eq:E3}
\int_{U_1^m}|\nabla (u_m\circ\mathcal R_1)(x)|^qdx\leq\int_{U_1^m}|\nabla (u_m\circ\mathcal R_1)(x)|^qdx\\
                \leq C\int_{\mathcal R_1\lf(U_1^m\r)}|\nabla u_m(x)|^qdx
                \leq C\lf(\int_{\widetilde{\boz}_h}|\nabla u(x)|^pdx\r)^{\frac{q}{p}}.
\end{multline}
By combining (\ref{eq:E1}) and (\ref{eq:E3}), we obtain 
\begin{eqnarray}\label{eq:norm4}
\int_{U_1^m}|\nabla E^m(u_m)(x)|^qdx\leq C\lf(\int_{\widetilde{\boz}_h}|\nabla u(x)|^p dx\r)^{\frac{q}{p}},
\end{eqnarray}
where the constant $C$ is independent of $m$ and $u$. 

By (\ref{eq:nin2}) and the fact that the collection $\{P_k^{j}\}$ is pairwise disjoint, we have 
\begin{multline}\label{eq:Eein1}
\int_{\bigcup_{k=1}^m\bigcup_{j}\mathsf C_k^{j}}|\nabla E^m(u_m)(x)|^qdx\leq C\int_{\bigcup_{k=1}^m\bigcup_{j}P_k^{j}}|\nabla u_m(x)|^qdx\\
 +C\int_{\bigcup_{k=1}^m\bigcup_{j}D_k^{j}}\lf(\sqrt{\frac{1}{x_n^2+\lf(s-\frac{r_k}{2}\r)^2}}\r)^{q}(|(u_m\circ\mathcal R_1)(x)|^q+|(u_m\circ\mathcal R_2)(x)|^q)dx\\
 +C\int_{\bigcup_{k=1}^m\bigcup_{j}A_k^{j}\setminus\overline{D_k^{j}}}\lf(\frac{1}{r_k}\r)^{q}(|(u_m\circ\mathcal R_1)(x)|^q+|(u_m\circ\mathcal R_2)(x)|^q)dx.
\end{multline}

The H\"older inequality gives
\begin{equation}\label{eq:Eein2}
\int_{\bigcup_{k=1}^m\bigcup_{j}P_k^{j}}|\nabla u_m(x)|^qdx\leq C\lf(\int_{\widetilde\boz_h}|\nabla u(x)|^pdx\r)^{\frac{q}{p}},
\end{equation}
\begin{multline}\label{eq:Eein3}
\int_{\bigcup_{k=1}^m\bigcup_{j}D_k^{j}}\lf(\sqrt{\frac{1}{x_n^2+\lf(s-\frac{r_k}{2}\r)^2}}\r)^q
(|(u_m\circ\mathcal R_1)(x)|^q+|(u_m\circ\mathcal R_2)(x)|^q)dx\\
\leq C\lf(\int_{\bigcup_{k=1}^m\bigcup_{j}D_k^{j}}|(u_m\circ\mathcal R_1)(x)|^p+|(u_m\circ\mathcal R_2)(x)|^pdx\r)^{\frac{q}{p}}\\
\times\lf(\int_{\bigcup_{k=1}^m\bigcup_{j}D_k^{j}}\lf(\sqrt{\frac{1}{x_n^2+\lf(s-\frac{r_k}{2}\r)^2}}\r)^{\frac{pq}{p-q}}dx\r)^{\frac{p-q}{p}}
\end{multline}
and
\begin{multline}\label{eq:Eein4}
\int_{\bigcup_{k=1}^m\bigcup_{j}A_k^{j}\setminus\overline{D_k^{j}}}
\lf(\frac{1}{r_k}\r)^q(|(u_m\circ\mathcal R_1)(x)|^q+|(u_m\circ\mathcal R_2)(x)|^q)dx\\
\leq\lf(\int_{\bigcup_{k=1}^m\bigcup_{j}A_k^{j}\setminus\overline{D_k^{j}}}|(u_m\circ\mathcal R_1)(x)|^p+|(u_m\circ\mathcal R_2)(x)|^pdx\r)^{\frac{q}{p}}\\
\times\lf(\int_{\bigcup_{k=1}^m\bigcup_{j}A_{k}^{j}\setminus\overline{D_k^{j}}}
\lf(\frac{1}{r_k}\r)^{\frac{pq}{p-q}}dx\r)^{\frac{p-q}{p}}.
\end{multline}

By (\ref{eq:jaco1}) and (\ref{eq:jaco2}), the change of variables formula yields that
\begin{equation}\label{eq:Eein5}
\int_{\bigcup_{k=1}^m\bigcup_{j}A_k^{j}\setminus\overline{D_k^{j}}}|(u_m\circ\mathcal R_1)(x)|^p+|(u_m\circ\mathcal R_2)(x)|^pdx\leq C\int_{\widetilde\boz_h}|u(x)|^pdx
\end{equation}
and
\begin{equation}\label{eq:Eein6}
\int_{\bigcup_{k=1}^m\bigcup_{j}D_k^{j}}|(u_m\circ\mathcal R_1)(x)|^p+|(u_m\circ\mathcal R_2)(x)|^pdx\leq C\int_{\widetilde\boz_h}|u(x)|^pdx.
\end{equation}
With $l_k=\sqrt{x_n^2+\lf(s-\frac{r_k}{2}\r)^2}$, by (\ref{eq:radius}) and \eqref{lambdaoo}, we have
\begin{multline}\label{eq:Eein7}
\int_{\bigcup_{k=1}^m\bigcup_{j}D_k^{j}}\lf(\frac{1}{\sqrt{x_n^2+\lf(s-\frac{r_k}{2}\r)^2}}\r)^{\frac{pq}{p-q}}dx\leq C\sum_{k=1}^m\sum_{j}r_k\int_0^{r_k}l_k^{n-2-\frac{pq}{p-q}}dl_k\\
\leq C\sum_{k=1}^m\sum_{j}r_k^{n-\frac{pq}{p-q}}\leq C\sum_{k=1}^\fz2^{(n-1)(k+1)\lf(1-\lambda\lf(n-\frac{pq}{p-q}\r)\r)}<\fz.
\end{multline}
Furthermore,
\begin{multline}\label{eq:Eein8}
\int_{\bigcup_{k=1}^m\bigcup_{j}A_k^{j}\setminus\overline{D_k^{j}}}\lf(\frac{1}{r_k}\r)^{\frac{pq}{p-q}}dx\leq C\sum_{k=1}^m\sum_{j=1}^{N_k}r_k^{n-1-\frac{pq}{p-q}}\\\leq C\sum_{k=1}^\fz2^{(n-1)(k+1)\lf(1-\lambda\lf(n-1-\frac{pq}{p-q}\r)\r)}<\fz.
\end{multline}
By combining inequalities (\ref{eq:Eein1})-(\ref{eq:Eein8}), we deduce that 
\begin{equation}\label{eq:Eein9}
\int_{\bigcup_{k=1}^m\bigcup_{j}\mathsf C_k^{j}}|\nabla E^m(u_m)(x)|^qdx\leq C\lf(\int_{\widetilde\boz_h}|u(x)|^p+|\nabla u(x)|^pdx\r)^{\frac{q}{p}}.
\end{equation}
Next, by combining (\ref{eq:in11}), (\ref{eq:norm4}) and (\ref{eq:Eein9}), we conclude that
\begin{equation}\label{eq:Eein10}
\int_{\mathcal C_o}|\nabla E^m(u_m)(x)|^qdx\leq C\lf(\int_{\widetilde\boz_h}|u(x)|^p+|\nabla u(x)|^pdx\r)^{\frac{q}{p}}.
\end{equation}
Hence, by combining (\ref{eq:IN1}) and (\ref{eq:Eein10}), we infer that
\begin{equation}\label{eq:in3}
\|E^m(u_m)\|_{W^{1,q}(\mathcal C_o)}\leq C\|u\|_{W^{1,p}(\widetilde{\boz}_h)},
\end{equation} 
uniformly in $m$.

By the definitions of $u_m$ and $E^m(u_m)$, for arbitrary $m, m'\in\mathbb N$ with $m<m'$, we have 
\begin{multline}\label{eq:in4}
\|E^m(u_m)-E^{m'}(u_{m'})\|^q_{W^{1,q}(\mathcal C_o)}\leq \int_{\bigcup_{k=m+1}^{m'}\bigcup_{j}\mathsf C_k^{j}}\left(|E^m(u_m)(x)|^q+|\nabla E^m(u_m)(x)|^q\right)dx\\
                                                                          +\int_{\bigcup_{k=m+1}^{m'}\bigcup_{j}\mathsf C_k^{j}}\left(|E^{m'}(u_{m'})(x)|^q+|\nabla E^{m'}(u_{m'})(x)|^q\right)dx.  
\end{multline}
By the definition of $E^m(u_m)$ and $E^{m'}(u_{m'})$, the H\"older inequality implies 
\begin{multline}\label{eq:es1}
\int_{\bigcup_{k=m+1}^{m'}\bigcup_{j}\mathsf C_k^{j}}\left(|E^m(u_m)(x)|^q+|\nabla E^m(u_m)(x)|^q\right)dx\\
\leq C\int_{\mathcal R_1\lf(\bigcup_{k=m+1}^{m'}\bigcup_{j}\mathsf C_k^{j}\r)}\left(|u_m(x)|^q+|\nabla u_m(x)|^q\right)dx\\
\leq C(p,q)\lf(\int_{\mathcal R_1\lf(\bigcup_{k=m+1}^{m'}\bigcup_{j}\mathsf C_k^{j}\r)}\left(|u(x)|^p+|\nabla u(x)|^p\right)dx\r)^{\frac{q}{p}},
\end{multline}
and
\begin{multline}\label{eq:es2}
\int_{\bigcup_{k=m+1}^{m'}\bigcup_{j}\mathsf C_k^{j}}\left(|E^{m'}(u_{m'})(x)|^q+|\nabla E^{m'}(u_{m'})(x)|^q\right)dx\leq \\
C\lf(\int_{\bigcup_{k=m+1}^{m'}\bigcup_{j}\widetilde{\mathsf C}_k^{j}}\left(|u_{m'}|^p+|\nabla u_{m'}|^p\right)dx+\int_{\mathcal R_1\lf(\bigcup_{k=m+1}^{m'}\bigcup_{j} A_k^{j}\r)}\left(|u_{m'}|^p+|\nabla u_{m'}|^p\right)dx\r)^{\frac{q}{p}}\\
 \leq C\lf(\int_{\bigcup_{k=m+1}^{m'}\bigcup_{j}\widetilde{\mathsf C}_k^{j}}\left(|u|^p+|\nabla u|^p\right)dx+\int_{\mathcal R_1\lf(\bigcup_{k=m+1}^{m'}\bigcup_{j}A_k^{j}\r)}\left(|u|^p+|\nabla u|^p\right)dx\r)^{\frac{q}{p}}.
\end{multline}
Since the volumes of $\mathcal R_1\lf(\bigcup_{k=m+1}^{m'}\bigcup_{j}A_k^{j}\r)$ and of $\bigcup_{k=m+1}^{m'}\bigcup_{j}\widetilde{\mathsf C}_k^{j}$ tend to zero as $m, m'$ approach infinity, both terms in (\ref{eq:es1}) and (\ref{eq:es2})  converge to zero. Consequently, $\{E^m(u_m)\}$ is a Cauchy sequence in the Sobolev space $W^{1,q}(\mathcal C_o)$ and hence converges to some function $v\in W^{1,q}(\mathcal C_o)$ with respect to the $W^{1,q}$-norm. Furthermore, there exists a subsequence of $\{E^m(u_m)\}$ which converges to $v$ almost everywhere in $\mathcal C_o$. On the other hand, by the definitions of $E^m(u_m)$ and $E(u)$, we have 
$$\lim_{m\to\fz}E^m(u_m)(x)=E(u)(x)$$
for almost every $x\in\mathcal C_o$. Hence $v(x)=E(u)(x)$ almost everywhere. This implies that $E(u)\in W^{1,q}(\mathcal C_o)$ with
\begin{equation}\label{eq:E42}
\|E(u)\|_{W^{1,q}(\mathcal C_o)}=\|v\|_{W^{1,q}(\mathcal C_o)}=\lim_{m\to\fz}\|E^m(u_m)\|_{W^{1,q}(\mathcal C_o)}\leq C\|u\|_{W^{1,p}(\widetilde{\boz}_h)}.
\end{equation}
We conclude that $E$ defined in (\ref{eq:exten}) is a linear extension operator from $W^{1,\fz}(\widetilde{\boz}_h)\cap C(\widetilde{\boz}_h)$ to $W^{1,q}(\mathcal C_o)$ with the norm inequality
$$
\|E(u)\|_{W^{1,q}(\mathcal C_o)}\leq C\|u\|_{W^{1,p}(\widetilde\boz_h)},
$$
where $C$ is independent of $u$. Since $W^{1,\fz}(\widetilde{\boz}_h)\cap C(\widetilde{\boz}_h)$ is dense in $W^{1,p}(\widetilde{\boz}_h)$, we can extend $E$ to entire $W^{1,p}(\widetilde{\boz}_h)$. It follows that $\widetilde \boz_h$ is a Sobolev $(p, q)$-extension domain, since $\mathcal C_o$ is  a 
$(q,q)$-extension domain.
\end{proof}

\subsection{Proofs of Theorem \ref{thm:positive} and \ref{thm:caF}}

\begin{proof}[Proof of Theorem \ref{thm:positive}]
The claim is an immediate consequence of Theorem \ref{thm:BET}.
\end{proof}

\begin{proof}[Proof of Theorem \ref{thm:caF}]
Let $n\geq 3$ and $1\leq q<n-1$. Fix $(n-1)q/(n-1-q)<p<\fz$ and a strictly increasing and continuous function $h: [0, 1]\to[0, 1]$.
Fix $\lambda>\lambda_o,$ where $\lambda_o$ is from Theorem \ref{thm:BET}. Define  
$$\tilde h(t):=\min\lf\{h(t), \lf(\frac{1}{t}\r)^{((1-\lambda(n-1-q))(n-1)(k+1)+k)/3k}\r\}.$$
Then $\widetilde{\boz_{\tilde h}}$ is a Sobolev $(p, q)$-extension domain by Theorem \ref{thm:BET}. Let $A:= E^{n-1}\times\lf(\frac{3}{2}, 2\r)$. Then 
$A\subset\partial\widetilde{\boz_{\tilde h}}$ and $|A|>0$. Let $0<r<\frac{1}{4}$ and $x\in A$ be arbitrary. 

We define a cut-off function on the ball $B(x, r)$ by setting
\begin{equation}
F_h(y)= \begin{cases} 1 &  \textnormal{in } B(x, \frac{r}{4}), \\
\frac{-4}{r}|y-x|+2 & \textnormal{in }B(x, \frac{r}{2})\setminus B(x,\frac{r}{4}),\\
 0 & \textnormal{in } B(x, r)\setminus B(x,\frac{r}{2}) \, . \end{cases}
\end{equation}

Set $u(y)=\chi_{\widetilde{\boz_{\tilde h}}}(y)$. Since $\widetilde{\boz_{\tilde h}}$ is a Sobolev $(p, q)$-extension domain, $E(u)\in W^{1,q}(\mathcal C_o)$. 
The function $v$ defined by $v(y):=F_h(y)E(u)(y)$ for $y\in B(x, r)$ satisfies 
$$v\in\mathcal W_q\lf(\widetilde{\boz_{\tilde h}}\cap B\lf(x, \frac{r}{4}\r), \widetilde{\boz_{\tilde h}}\cap A\lf(x; \frac{r}{2},\frac{3r}{4}\r); B(x,r)\r).$$ Pick $k_r\in\mathbb N$ so that $2^{-k_r-2}<r\leq 2^{-k_r-1}.$  
If $\mathsf C_k^j\cap B(x,r)\neq \emptyset,$ then, by the definition of $\mathsf C_k^j,$ we
have that $k>k_r.$ Moreover, by the definition of $E(u)$ in \eqref{eq:exten}, we have that
$$
|\nabla v(y)|\leq\begin{cases}\frac{C}{r_k}, & \ {\rm for\ every}\ y\in \mathsf C_k^{j}\cap B(x,r),\\
0, & \ {\rm elsewhere}\, .\end{cases}
$$
Hence, by the definition of $v$, we have
\begin{multline}
\int_{B(x, r)}|\nabla v(y)|^qdy\leq\sum_{k=k_r}^\fz\sum_{j}\int_{\mathsf C_k^{j}\cap B(x, r)}|\nabla v(y)|^qdy\nonumber\\
\leq C\sum_{k=k_r}^\fz r2^{(n-1)(k+1)}r_k^{n-1-q}\leq C\sum_{k=k_r}^\fz r2^{-k}\tilde h(8^{-k})\leq Crh(r).
\end{multline}
Thus
$$
Cap_q\lf(\widetilde{\boz_{\tilde h}}\cap B\lf(x, \frac{r}{4}\r), \widetilde{\boz_{\tilde h}}\cap A\lf(x; \frac{r}{2}, \frac{3r}{4}\r); B(x, r)\r)\leq Crh(r).
$$
This implies that
$$
\limsup_{r\to0^+}\frac{Cap_q\lf(\widetilde {\boz_{\tilde h}}\cap B\lf(x, \frac{r}{4}\r), \widetilde {\boz_{\tilde h}}\cap A\lf(x; \frac{r}{2},\frac{3r}{4}\r); B(x, r)\r)}{h(r)}\leq \lim_{r\to 0^+}Cr=0,
$$
as desired.
\end{proof}

\begin{rem}\label{rajatapaus} One can easily modify the construction of the domain from the previous proof so as to obtain a domain that fails to be $(n-1)$-fat at points of positive
volume of the boundary.  Let us sketch the necessary changes since we cannot use an extension operator as in the previous argument. 

First, define $r_k=2^{-k-2}\exp(-\exp(2^k))$ and $R_k=2^{-k-2}.$ Instead of $E(u)$ in the above computation, we use a function $u$ defined as follows. 
On each $Q_k^j\times [1,2),$ our function $u$ as a function of $(y',t)$
satisfies $u(y',t)=1$ if $y\in \mathsf D_k^j,$ $u(y',t)=0$ if $y'\notin B^{n-1}(x_k^j,R_k)$ and $u(y',t)=\frac {\log( \frac {R_k}{|y'-x_k^j|} ) } {\log( \frac {R_k}{r_k})}$ otherwise. 
Define $v(y)=F_h(y)u(y).$ Then a simple computation 
gives what we want.
\end{rem}

\section{Final comments}

In this section, we discuss in more detail some of the issues mentioned in the introduction and pose open problems that are motivated by the results in this paper.

First of all, let us comment on the locality of the estimate \eqref{equa:meade} from Theorem \ref{thm:point} that holds
for almost every $x$ for $0<r<r_x.$ When $q>n-1,$ we actually have this estimate for all $x$ and
all $0<r<\min\{1,\diam(\Omega)/4\}.$  This also holds when $q=1$ and $n=2.$

\begin{cor} \label{globaali}
Suppose that $1\le q<p$ when $n=2$ or that  $n-1<q<p$ when $n\ge 3.$ If $\boz$ is a Sobolev $(p,q)$-extension domain, then there is a nonnegative, bounded and countably additive set function $\Phi$ defined on  open sets, with the following property. For each $x\in\partial \boz$ and every $0<r<\min\{1,\frac 1 4 \diam(\boz)\},$ we have
\begin{equation}\label{vika}
\Phi(B(x,r))^{p-q}|B(x,r)\cap \boz|^q \ge |B(x,r)|^q.
\end{equation}
\end{cor}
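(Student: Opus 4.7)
The plan is to combine Theorem \ref{thm:capa} with the sharpened form of Theorem \ref{thm:q-fat} recorded in Remark \ref{vahva}. By Lemma \ref{lem:homo}, we may assume that the given extension operator $E: W^{1,p}(\boz)\to W^{1,q}(\rn)$ is homogeneous, and let $\Phi$ be the associated set function from \eqref{setfunc11}. Then Theorem \ref{thm:sofun1} guarantees nonnegativity, boundedness, monotonicity and countable additivity of $\Phi$ on open sets.

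Fix an arbitrary $x\in\partial\boz$ and $0<r<\min\{1,\tfrac14\diam(\boz)\}$. Theorem \ref{thm:capa} yields
\[
\Phi(B(x,r))^{p-q}\,|B(x,r)\cap\boz|^q \;\ge\; r^{pq}\,Cap_q\!\lf(\boz\cap B(x,\tfrac{r}{4}),\,\boz\cap A(x;\tfrac{r}{2},\tfrac{3r}{4});\,B(x,r)\r)^p.
\]
Under the hypothesis on $q$ (namely $n-1<q$ when $n\ge 3$, or $q=1$ in the plane), Remark \ref{vahva} gives the uniform capacity lower bound \eqref{vahvempi}, which applies at \emph{every} boundary point and for all sufficiently small scales. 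Applied with $t=r/4$ it produces
\[
Cap_q\!\lf(\boz\cap B(x,\tfrac{r}{4}),\,\boz\cap A(x;\tfrac{r}{2},\tfrac{3r}{4});\,B(x,r)\r) \;\ge\; C\, r^{n-q},
\]
with $C$ depending only on $n$ and $q$ (the same constant for all $x$ and $r$). This is the key input that removes the exceptional set that would otherwise appear via the $q$-fatness/capacitory density hypothesis in Theorem \ref{thm:point}: for the $q$'s in question, capacitory density is automatic and quantitative at every boundary point.

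Combining the two displays,
\[
\Phi(B(x,r))^{p-q}\,|B(x,r)\cap\boz|^q \;\ge\; C^{p}\, r^{pq}\, r^{p(n-q)} \;=\; C^{p}\, r^{pn} \;\ge\; C'\,|B(x,r)|^{p},
\]
where $C'>0$ is independent of $x$ and $r$. Replacing $\Phi$ by $c\Phi$ with $c=(C')^{-1/(p-q)}$ (which preserves nonnegativity, boundedness and countable additivity) absorbs the constant and yields the claimed inequality. The argument is not technically hard; its only subtlety is noting that in the stated range of $q$ the capacity estimate of Remark \ref{vahva} is available pointwise on the whole boundary, so the conclusion of Theorem \ref{thm:point} upgrades from ``almost every $x$ and $0<r<r_x$'' to ``every $x$ and every admissible $r$''.
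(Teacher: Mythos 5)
Your proposal is correct and follows exactly the paper's own one-line proof: combine Theorem \ref{thm:capa} with the uniform capacity bound \eqref{vahvempi} of Remark \ref{vahva} (applied at every boundary point and every admissible scale) and then rescale $\Phi$ by a constant. Note that this derivation, like the paper's, produces $|B(x,r)|^p$ on the right-hand side, consistent with \eqref{equa:meade}, so the exponent $q$ printed in \eqref{vika} appears to be a typo in the statement rather than a defect in your argument.
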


This conclusion follows by combining Theorem \ref{thm:capa} with Remark \ref{vahva}, see inequality \eqref{vahvempi}.  Moreover, Theorem \ref{thm:capa} shows that \eqref{vika} holds also uniformly in
$x$ and $r$ for $1\le q\le n-1$ if we assume that \eqref{vahvempi} holds for these values. 

One can view the uniform validity of \eqref{vika} as the optimal analog of the Ahlfors-regularity condition \eqref{eq:regular}. In \cite{HKT, HKT1}, it was shown, relying on \eqref{eq:regular}, 
that a Sobolev $(p,p)$-extension domain can be equipped with a linear extension operator. We proved in Lemma \ref{lem:homo} that a Sobolev $(p,q)$-extension domain can be equipped with a homogeneous extension operator but we do not know if one could promote this to linearity. This motivates the following problem.

\begin{quest} Suppose that $\Omega$ is a bounded domain that satisfies the conclusion of Corollary
\ref{globaali}. Find the additional assumptions that ensure the existence of a linear extension operator from $W^{1,p}(\Omega)$ to $W^{1,q}(\rn).$
\end{quest} 

Given $1\le q<n-1,$ we constructed a Sobolev $(p,q)$-extension domain whose boundary has positive volume. We do not know if such domains exist also when $q=n-1>1.$

\begin{quest} Let $n\ge 3.$ Does there exist a Sobolev $(p,n-1)$-extension domain $\boz\subset \rn,$ for some $p>n-1,$ so that $|\partial \boz|>0$?
\end{quest}

Furthermore, our constructions of examples of $(p,q)$-extension domains with positive boundary volume have restrictions on 
$p$ in terms of $q.$ Even though these restrictions are natural for our constructions, we do not
know if some other constructions would allow $p$ to be arbitrarily close to $q.$

\begin{quest} Given $n\ge 3,$ $1\le q<n-1$ and $p>q,$ does there exist a Sobolev $(p,q)$-extension domain $\Omega\subset \rn$ whose boundary has positive volume?
\end{quest}

Finally, the reader familiar with \cite{HKT} may wonder why we do not employ the argument that was used there to prove \eqref{eq:regular} towards establishing \eqref{vika} in the case $q<n.$ We have indeed tried this
but without success.





\end{document}